\newtheorem{thm}{Theorem}[subsection]
\newtheorem{theoremalpha}{Theorem}
\newtheorem{lemma}[thm]{Lemma}
\newtheorem{lemmadefi}[thm]{Lemma - Definition}
\newtheorem{prop}[thm]{Proposition}
\newtheorem{cor}[thm]{Corollary}
\newtheorem{fact}[thm]{Fact}
\theoremstyle{remark}
\newtheorem{remark}[thm]{Remark}
\theoremstyle{definition}
\newtheorem{defi}[thm]{Definition}
\numberwithin{equation}{section}
\newenvironment{sis}{\left\{\begin{aligned}}{\end{aligned}\right.}
\newtheorem{example}[thm]{Example}
\newcommand{\ov}{\overline}
\newcommand{\un}{\underline}
\renewcommand{\Im}{\operatorname{Im}}
\newcommand{\val}{\operatorname{val}}
\newcommand{\rk}{\operatorname{rk}}
\newcommand{\Jac}{\operatorname{Jac}}
\newcommand{\GL}{\operatorname{GL}}
\newcommand{\Q}{\mathbb{Q}}
\newcommand{\Z}{\mathbb{Z}}
\newcommand{\R}{\mathbb{R}}
\newcommand{\bbD}{\mathbb{D}}
\newcommand{\CC}{\mathcal{C}}
\newcommand{\DD}{\mathcal{D}}
\newcommand{\BB}{\mathcal{B}}
\newcommand{\II}{\mathcal{I}}
\newcommand{\D}{\mathcal{D}}
\newcommand{\Del}{{\rm Del}}
\newcommand{\Vor}{{\rm Vor}}
\newcommand{\prin}{\sigma_{{\rm prin}}}
\renewcommand{\O}{\Omega_g}
\newcommand{\Ort}{\Omega_g^{\rm rt}}
\newcommand{\Omat}{\Omega_g^{\rm mat}}
\newcommand{\Null}{{\rm Null}}
\newcommand{\Mg}{\mathcal{M}_g}
\newcommand{\Mgb}{\ov{\mathcal{M}_g}}
\newcommand{\Ag}{\mathcal{A}_g}
\newcommand{\tg}{{\rm t}_g}
\newcommand{\tgb}{{\ov{{\rm t}_g}}}
\newcommand{\Cen}{\Sigma_{\operatorname{C}}}
\newcommand{\Per}{\Sigma_{\operatorname{P}}}
\newcommand{\Voro}{\Sigma_{\operatorname{V}}}
\newcommand{\Mat}{\Sigma_{{\rm mat}}}
\newcommand{\AgVor}{\ov{\Ag}^V}
\newcommand{\AgPer}{\ov{\Ag}^P}
\newcommand{\Agmat}{\ov{\Ag}^{\rm mat}}
\newcommand{\AgCen}{\ov{\Ag}^C}
\newcommand{\Agcogra}{\ov{\mathcal{A}}_g^{\rm cogr}}
\begin{document}

\title[Comparing Perfect and 2nd Voronoi]{Comparing Perfect and 2nd Voronoi decompositions: the matroidal locus}

\author{Margarida Melo and Filippo Viviani}
\address{Departamento de Matem\'atica da Universidade de Coimbra,
Largo D. Dinis, Apartado 3008, 3001 Coimbra (Portugal) \text{ and } Dipartimento di Matematica,
Universit\`a Roma Tre,
Largo S. Leonardo Murialdo 1,
00146 Roma (Italy)}
\email{mmelo@mat.uc.pt, viviani@mat.uniroma3.it}

\thanks{The first author  was supported by the FCT project \textit{Espa\c cos de Moduli em Geometria Alg\'ebrica} (PTDC/MAT/111332/2009), by the FCT project \textit{Geometria Alg\'ebrica em Portugal} (PTDC/MAT/099275/2008) and by the Funda\c c\~ao Calouste Gulbenkian program ``Est\'imulo \`a investiga\c c\~ao 2010''.
The second author was supported by the grant FCT-Ci\^encia2008 from CMUC (University of Coimbra)
and by the FCT project \textit{Espa\c cos de Moduli em Geometria Alg\'ebrica} (PTDC/MAT/111332/2009).}
%and by MIUR Cofin 2008 - \textit{Geometria delle variet\`{a} algebriche e dei loro spazi di moduli}.}

%\thanks{The second author was partially supported by a Funda\c{c}\~ao para a Ci\^encia e Tecnologia doctoral grant. The %third author was partially supported by FCT-Ci\^encia2008.}

\keywords{Positive definite quadratic forms, Admissible decomposition, Perfect cone decomposition, 2nd Voronoi decomposition, Regular matroids, Seymour's decomposition theorem,
Toroidal compactifications, Moduli space of abelian varieties.}

\subjclass[2010]{14H10, 52B40, 11H55}

\begin{abstract}
We compare two rational polyhedral admissible decompositions of the cone of positive definite quadratic forms: the perfect cone decomposition and the 2nd Voronoi decomposition. We determine which cones belong to both the
decompositions, thus providing a positive answer to a conjecture of Alexeev-Brunyate  in \cite{AB}.
As an application, we compare the two associated toroidal compactifications of the moduli space of principal polarized abelian varieties: the perfect cone compactification and the 2nd Voronoi compactification.
\end{abstract}

\maketitle

%\begin{center}\today
%\end{center}

%\tableofcontents

\section{Introduction}

The theory of reduction of positive definite quadratic forms consists in finding a fundamental domain for the natural action of $\GL_g(\Z)$ on the cone
$\O$ of positive definite quadratic forms of rank $g$ or, more generally, on its rational closure $\Ort$, i.e. the cone of positive semi-definite quadratic forms whose null
space is defined over the rationals.
 One way to achieve this is to find a decomposition of the cone $\Ort$  into an infinite
$\GL_g(\Z)$-periodic face-to-face collection of rational polyhedral subcones (or, in short, an admissible decomposition, see Definition \ref{decompo} for details) in such a way that there are only
finitely many $\GL_g(\Z)$-equivalence classes of subcones.
This theory is very classical, dating back to work of Minkowsky \cite{Min}, Voronoi \cite{Vor} and Koecher \cite{Koe}.

A renewed interest in this theory came when Ash-Mumford-Rapoport-Tai (see \cite{AMRT}) showed how to associate to every admissible decomposition of $\Ort$ a compactification of the moduli
space $\Ag$ of principally polarized abelian varieties of dimension $g$, a so-called toroidal compactification of $\Ag$. See also the book of Namikawa \cite{NamT} for a nice account of the theory.

The aim of this paper is to compare two well-known  admissible decompositions of $\Ort$ (both introduced by Voronoi in \cite{Vor}), namely:
\begin{enumerate}[(i)]
\item The \emph{perfect cone decomposition} $\Per$ (also known as the first Voronoi
decomposition);
\item The \emph{2nd Voronoi decomposition} $\Voro$ (also known as  the L-type decomposition).
%\item The central cone decomposition, which was introduced in \cite{Koe}.
\end{enumerate}
We refer to Sections \ref{S:Perfect} and Sections \ref{S:Voro} for the definitions of the above admissible decompositions.

Consider the toroidal compactifications of $\Ag$ associated to the perfect and the 2nd Voronoi decompositions: the perfect toroidal compactification and the 2nd Voronoi toroidal compactification, respectively. Denote them  by $\AgPer$ and by $\AgVor$, respectively.
%. perfect cone decomposition (resp.
%the 2nd Voronoi decomposition) is called the perfect (resp. the Voronoi) toroidal compactification of $\Ag$ and
%is
Each of these compactifications plays an important role in the theory of the compactifications of $\Ag$:
\begin{enumerate}[(i)]
\item $\AgPer$ is the canonical model of $\Ag$ for $g\geq 12$ (Shepherd-Barron \cite{SB}).
\item $\AgVor$ is (up to possibly normalizing) the main irreducible component of Alexeev's
    moduli space $\ov{AP_g}$ of stable semiabelic pairs, which provides a \emph{modular} compactification of $\Ag$
    (Alexeev \cite{alex1}). See also the work of Olsson \cite{Ols} for a different modular interpretation of $\AgVor$ via logarithmic geometry.
\end{enumerate}
Moreover, each of them is well-suited to compactify the Torelli map. Indeed,
the Torelli map
$$ \tg : \Mg\to \Ag,  $$
sending a curve $X\in \Mg$ into its polarized Jacobian $(\Jac(X), \Theta_X)\in \Ag$, extends to regular maps
\begin{equation*}
\tgb^V: \Mgb\to \AgVor \hspace{0,3cm} \text{ and } \hspace{0,3cm} \tgb^P: \Mgb\to \AgPer,
\end{equation*}
where $\Mgb$ is the Deligne-Mumford (see \cite{DM}) compactification of $\Mg$ via stable curves. The existence of
$\tgb^V$ is classically due to Mumford-Namikawa \cite{nam2} (see also Alexeev \cite{alex} for a modular interpretation). For a long period, this was the only known compactification of the Torelli map
until the recent breakthrough work of Alexeev-Brunyate \cite{AB} who proved the existence of the regular map
$\tgb^P$. Moreover, Alexeev-Brunyate also showed in loc. cit. that  $\AgPer$ and $\AgVor$ are isomorphic on an open subset containing the image of $\Mgb$ via the compactified Torelli maps
$\tgb^P$ and $\tgb^V$, namely the cographic locus (see Fact \ref{F:comp-Tor} for more details).
Further, they indicate in \cite[6.3]{AB} a bigger open subset where $\AgPer$ and $\AgVor$
should be isomorphic, namely the matroidal locus (see Definition \ref{D:mat-locus}).
The aim of this paper, which was very much inspired by the reading of \cite{AB}, is to give a positive answer to their conjecture and, moreover, to show that the matroidal locus is indeed the biggest open subset where $\AgPer$ and
$\AgVor$ are isomorphic.

%Since these are so far the only known regular extensions of $\tg$ from $\Mgb$ to a toroidal compactification of $A_g$, %the study of $\tgb^V$ and $\tgb^P$ and the way they relate to one another is very important from the point of view of %the theory of moduli spaces of curves, abelian varieties and their compactifications.
%In loc. cit., Alexeev and Brunyate show that $\AgPer$ and $\AgVor$ are isomorphic on an open subset containing the
%image of $\Mgb$ via the compactified Torelli maps $\tgb^P$ and $\tgb^V$.
% there is a Zariski open subset contained in the image of $\Mgb$ via the compactified Torelli maps shared by $\AgPer$ and %$\AgVor$.
%However, the question of describing the biggest locus where $\AgPer$ and $\AgVor$ are isomorphic remained open and
%it is one of the aim of this paper.
% In the present paper is mainly motivated by Alexeev-Brunyate's work in loc. cit.:

%Indeed, our main result is a description of the subcones of $\Ort$ that belong to both $\Per$ and $\Voro$ and, as a %consequence, a description of the biggest open subset where $\AgPer$ and $\AgVor$ are isomorphic.
Let us introduce some notations in order to describe our results in more detail.
A real $g\times n$ matrix $A\in M_{g,n}(\R)$ is called totally unimodular if every square submatrix of $A$ has determinant equal to $-1$, $0$ or $1$. A matrix $A\in M_{g,n}(\R)$ is called unimodular if there exists $h\in \GL_g(\Z)$ such that $h \cdot A$ is totally unimodular.
%Moreover, a totally unimodular matrix is said to be simple if it has no zero columns nor parallel columns.
Given a $g\times n$ unimodular matrix $A\in M_{g,n}(\R)$ with column vectors $\{v_1,\ldots,v_n\}$, we define a rational polyhedral subcone $\sigma(A)$ of $\Ort$
as the convex hull of the rank $1$ quadratic forms $\{v_i\cdot v_i^t\}_{i=1,\ldots,n}$.
The union of the cones $\sigma(A)$, as $A$ varies among all the  unimodular matrices $A\in M_{g,n}(\R)$ of rank
at most $g$, forms a subcone of $\Ort$, denoted by $\Omat$ and called the \emph{matroidal subcone.} The collection of the cones $\{\sigma(A)\}$ is called the
\emph{matroidal decomposition} of $\Omat$ and is denoted by $\Mat$. The name matroidal comes from the fact that  unimodular matrices $A\in M_{g,n}(\R)$ of rank at most $g$
up to the natural action  of $\GL_g(\Z)$ by left multiplication are in bijection with regular matroids of rank at most $g$ (see Fact \ref{F:reg-thm}).
In particular, the $\GL_g(\Z)$-equivalence classes of cones in $\Mat$ correspond bijectively to (simple) regular matroids of rank at most $g$ (see  Lemma \ref{L:prop-mat-cones}).
Our first main result is the following (see Corollary \ref{C:main-result}).

\begin{theoremalpha}\label{T:thmA}
A cone $\sigma$ belongs to both $\Voro$ and $\Per$ if and only if $\sigma$ belongs to $\Mat$, i.e.
$$\Voro\cap \Per= \Mat.$$
\end{theoremalpha}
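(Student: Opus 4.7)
\emph{Proof strategy.} The plan is to prove the two inclusions $\Mat \subseteq \Voro \cap \Per$ and $\Voro \cap \Per \subseteq \Mat$ separately, leveraging the structural characterizations of the three decompositions.

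For the inclusion $\Mat \subseteq \Voro \cap \Per$, I would fix a matroidal cone $\sigma(A)$ and show it belongs to both decompositions. The columns $v_1,\ldots,v_n$ of the unimodular matrix $A$ encode a regular matroid, so by Seymour's decomposition theorem the matroid is an iterated $1$-, $2$- and $3$-sum of graphic matroids, cographic matroids, and the sporadic matroid $R_{10}$. Accordingly, I would first verify the statement on these three building blocks: cones of graphic matroids are classical and lie in both decompositions (they are the cones of cuts of the underlying graph); cones of cographic matroids lie in $\Voro$ by the Alexeev--Brunyate construction and can be checked to lie in $\Per$ directly via the minimal-vector criterion; the $R_{10}$-cone is low-dimensional and can be handled by explicit computation. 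Then I would show that the matroid-theoretic sum operations correspond to face-compatible gluings of cones in both $\Per$ and $\Voro$, so that membership in $\Per \cap \Voro$ propagates from the building blocks to every matroidal cone.

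For the harder inclusion $\Voro \cap \Per \subseteq \Mat$, fix $\sigma \in \Voro \cap \Per$. Since $\sigma$ is a perfect cone, its extreme rays are rank-one forms $v_i \cdot v_i^t$ for primitive vectors $v_i \in \Z^g$ (these are, up to sign, the minimal vectors of any perfect form in the interior of $\sigma$). Form the matrix $A$ with columns $v_i$; the task is to show that $A$ is unimodular, which is precisely the content of $\sigma = \sigma(A) \in \Mat$. This is where the hypothesis $\sigma \in \Voro$ enters: a 2nd Voronoi cone whose extreme rays are all rank-one corresponds to a Delaunay decomposition that is a zonotopal tiling of $\R^g$, and such tilings are generated by totally unimodular vector systems (a classical fact relating L-types with rank-one extreme rays to regular matroids). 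Therefore $\sigma \in \Voro$ forces every square submatrix of $A$ to have determinant in $\{-1,0,+1\}$, giving the required unimodularity.

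The main obstacle is the second inclusion. While extracting rank-one extreme rays from $\Per$ is straightforward, converting the additional hypothesis $\sigma \in \Voro$ into the arithmetic condition of total unimodularity on $A$ is nontrivial, and is precisely the content of the zonotope-Delaunay correspondence combined with Seymour's characterization of regular matroids. A technical subtlety I expect to confront is that not every totally unimodular vector configuration yields a full-dimensional cone in $\Per \cap \Voro$; one must carefully track the facial structure in both decompositions and verify compatibility at each level of the Seymour decomposition, to make sure that the bijection between $\GL_g(\Z)$-equivalence classes on each side is honest.
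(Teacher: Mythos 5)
Your overall architecture matches the paper's: the inclusion $\Voro\cap\Per\subseteq\Mat$ is handled exactly as in the paper (extremal rays of a cone of $\Per$ are rank one, and the Erdahl--Ryshkov characterization of the cones of $\Voro$ with rank-one extremal rays as the zonotopal/lattice-dicing ones, i.e.\ \cite[Thm.~4.3]{ER}, then forces such a cone into $\Mat$), and the inclusion $\Mat\subseteq\Per$ is correctly organized around Seymour's decomposition theorem with the three base cases you list. Two remarks on attribution and emphasis: it is Alexeev--Brunyate who put the \emph{cographic} cones into $\Per$ (their contribution is not that they lie in $\Voro$, which is classical), and the inclusion $\Mat\subseteq\Voro$ does not need Seymour at all --- it is Erdahl--Ryshkov's theorem that every generalized lattice dicing is a Delone subdivision with $\sigma(A)=\sigma_{\D_A}$, so running the sum operations through $\Voro$ as well is an unnecessary detour. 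You have also inverted the difficulty assessment: the direction you call ``the main obstacle'' is a two-line argument, while essentially all of the work lives in $\Mat\subseteq\Per$.

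The genuine gap is in the step where you ``show that the matroid-theoretic sum operations correspond to face-compatible gluings of cones in both $\Per$ and $\Voro$, so that membership propagates.'' For the $1$-sum this is fine (the cones literally combine as a direct sum), but for the $2$-sum and $3$-sum there is no gluing of cones at all: the matrices $A_1$ and $A_2$ overlap in shared columns, the ambient dimensions do not add, and membership of $\sigma(A)$ in $\Per$ has to be certified by exhibiting a single positive definite form $\ov{Q}$ whose set of minimal integral vectors is exactly $\pm$ the columns of the summed matrix $A$. Constructing $\ov{Q}$ from forms $\ov{Q_1},\ov{Q_2}$ certifying $\sigma(A_1),\sigma(A_2)\in\Per$ requires a specific, non-obvious choice of the off-diagonal block coupling the two pieces (for the $3$-sum it is $M=\tfrac{1}{3}(4r_1r_2^t+4s_1s_2^t+2r_1s_2^t+2s_1r_2^t)$, chosen so that the cross terms cancel after minimizing over the shared coordinates), followed by a quantitative estimate showing that any integral vector with nonzero components in both pieces has $\ov{Q}$-value at least $\tfrac{3}{4}+\tfrac{3}{4}>1$. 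None of this is visible from the cone geometry, and no amount of ``tracking facial structure'' substitutes for it; as written, your propagation step is an assertion, not an argument. Relatedly, ``$R_{10}$ is low-dimensional and can be handled by explicit computation'' undersells that case: one must exhibit a perfect form of $\bbD_5$ type and a supporting hyperplane of its $15$-dimensional perfect cone cutting out the $10$-dimensional cone $\sigma(A_{10})$ as a face.
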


\noindent The proof of the above Theorem \ref{T:thmA} is divided into three parts: we begin by proving that $\Mat$ is contained in $\Voro$, then we show that $\Mat$ is contained in $\Per$ and finally we prove that $\Per\cap \Voro$ is contained in $\Mat$.

The fact that $\Mat\subseteq \Voro$ is a result of
Erdhal-Ryshkov \cite{ER}: they prove that $\Mat$ is the subset of $\Voro$ corresponding to cones whose associated
Delone subdivision is a lattice dicing (see Section \ref{S:Mat-Voro} for details).

In order to prove that $\Per\cap \Voro\subseteq \Mat$, we use the fact that $\Per$ is made of cones
whose extremal rays are generated by rank 1 quadratic forms together with a result of Erdhal-Ryshov \cite{ER} that
characterizes $\Mat$ as the collection of cones of $\Voro$ satisfying the above property.

The proof of $\Mat\subseteq \Per$ is the hardest part. To achieve that, we use Seymour's decomposition theorem
which says that any regular matroid can be obtained, via a sequence of 1-sums, 2-sums and 3-sums, from three kinds of basic
matroids: graphic, cographic and a special matroid called $R_{10}$ (see Section \ref{S:matroids} for details).
A crucial role is played by a result of Alexeev-Brunyate (see \cite[Thm. 5.6]{AB}) which, in our language, says that
if $A$ is a unimodular matrix representing a cographic matroid, then $\sigma(A)\in \Per$. The authors of loc. cit. asked in \cite[6.3]{AB} if their result could be extended from cographic matroids to regular matroids and, indeed,
Theorem \ref{T:thmA}  answers positively to their question.
%We note that it would be very interesting to give a more conceptual proof of the inclusion $\Mat\subseteq \Per$, %avoiding the use of Seymour's decomposition theorem and of the technical Lemmas \ref{L:lem2} and \ref{L:lem3}.

In the last part of the paper we explore the consequences of Theorem \ref{T:thmA} in terms of the relationship between the toroidal compactifications of $\Ag$ that we mentioned before: $\AgPer$ and $\AgVor$.
Indeed,
 %corresponding to
 the matroidal decomposition $\Mat$ of $\Omat\subseteq \Ort$
  %we get
 yields a partial compactification $\Agmat$ of $\Ag$, i.e. an irreducible variety containing $\Ag$ as an open dense subset (see Definition \ref{D:mat-locus}). Since $\Mat\subseteq \Per$ and $\Mat\subseteq \Voro$,
$\Agmat$ is an open subset of both $\AgPer$ and $\AgVor$.
Our second main result is the following (see Theorem \ref{T:compa-toro} for a more precise version).

\begin{theoremalpha}\label{T:mainB}
\noindent
\begin{enumerate}[(i)]
\item  $\Agmat$ is the biggest open subset of $\AgVor$ where the rational map $\AgVor\stackrel{\tau}{\dashrightarrow} \AgPer$ is defined and is an isomorphism.
\item $\Agmat$ is the biggest open subset of $\AgPer$ where the rational map $\AgPer \stackrel{\tau^{-1}}{\dashrightarrow} \AgVor$ is defined.
\item  The compactified Torelli maps $\tgb^P$ and $\tgb^V$ fit into the following commutative diagram
    $$\xymatrix{
    & \Agmat \ar@{^{(}->}[r] \ar@{=}[dd]& \AgVor \ar@{-->}[dd]^{\tau} \\
    \Mgb \ar[ru]^{\tgb^V} \ar[rd]_{\tgb^P} & & \\
    & \Agmat \ar@{^{(}->}[r] & \AgPer\\
    }$$
\end{enumerate}
\end{theoremalpha}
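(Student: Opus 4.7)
The plan is to deduce Theorem B from Theorem A via the standard dictionary between admissible decompositions of $\Ort$ and toroidal compactifications of $\Ag$ developed in \cite{AMRT}. Recall that for any two admissible decompositions $\Sigma_1$ and $\Sigma_2$ of $\Ort$, the associated compactifications $\ov{\Ag}^{\Sigma_1}$ and $\ov{\Ag}^{\Sigma_2}$ are canonically birational (both containing $\Ag$ as a common open dense subset); the rational map $\ov{\Ag}^{\Sigma_1}\dashrightarrow \ov{\Ag}^{\Sigma_2}$ is a morphism on the open subvariety indexed by cones $\sigma\in\Sigma_1$ that embed (up to $\GL_g(\Z)$) into some cone of $\Sigma_2$, and it is a local isomorphism exactly along strata indexed by cones of $\Sigma_1\cap\Sigma_2$. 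Applying this with $\Sigma_1=\Voro$, $\Sigma_2=\Per$, and invoking Theorem A ($\Voro\cap\Per=\Mat$), we immediately obtain that $\Agmat$ is a common open subvariety of $\AgVor$ and $\AgPer$ on which $\tau$ restricts to the identity; this already yields the positive content of (i) and (ii).

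For maximality in (i), fix any $\sigma\in\Voro\setminus\Mat=\Voro\setminus\Per$: the local toric structure of the boundary stratum $Z_\sigma\subseteq\AgVor$ is dictated by $\sigma$, so a local isomorphism with $\AgPer$ in a neighborhood of $Z_\sigma$ would force $\sigma$ to appear (up to $\GL_g(\Z)$) also as a cone of $\Per$, contradicting Theorem A. Maximality in (ii) is the main technical hurdle: given $\sigma\in\Per\setminus\Mat$, one must show that $\tau^{-1}$ is not defined on any open set meeting $Z_\sigma$, which by the general theory amounts to showing that $\sigma$ is not contained (up to $\GL_g(\Z)$) in any cone of $\Voro$. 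The strategy is to leverage two facts: every cone of $\Per$ has its extremal rays generated by rank-1 quadratic forms (by definition of $\Per$), and by Erdhal-Ryshkov $\Mat$ is exactly the collection of cones of $\Voro$ sharing this property. If by contradiction $\sigma$ sat inside some $\tau'\in\Voro$, then a careful analysis of the smallest face of $\tau'$ containing $\sigma$, using that the rank-1 forms in $\tau'$ assemble into a well-behaved subcone together with the face-to-face property of $\Voro$, would produce a cone of $\Voro$ containing $\sigma$ whose extremal rays are all rank-1, hence a cone of $\Mat$. Combining this inclusion with $\sigma\in\Per$ and Theorem A would then force $\sigma\in\Mat$, contradicting the choice of $\sigma$.

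Part (iii) follows formally from (i) together with known results on the compactified Torelli maps. Alexeev-Brunyate \cite{AB} show that $\tgb^P$ factors through the cographic open subset of $\AgPer$, and $\tgb^V$ factors through the corresponding cographic open subset of $\AgVor$ by the classical work of Namikawa. Since every cographic matroid is regular, the cographic locus is an open subset of $\Agmat$, so both Torelli maps factor through $\Agmat$; under the identification provided by (i), these factorizations agree on $\Mgb$, yielding the required commutative triangle. The principal obstacle in the whole argument lies in the maximality assertion (ii): the bare condition $\sigma\in\Per\setminus\Voro$ does not a priori preclude $\sigma$ from being contained in a larger cone of $\Voro$, and ruling out such containment rests on the subtle interplay between the rank-1 generation condition characteristic of $\Per$ and the Erdhal-Ryshkov description of $\Mat$ inside $\Voro$.
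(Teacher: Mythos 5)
Your overall architecture matches the paper's: parts (i) and (iii) are handled exactly as in the text (part (i) is a formal consequence of Theorem A via the toroidal dictionary, and part (iii) follows because both compactified Torelli maps land in the cographic, hence matroidal, strata and agree on the dense open $\Mg$). You have also correctly isolated the real content of part (ii): one must show that if $\sigma'\in \Per$ is contained in some cone $\sigma\in\Voro$, then $\sigma'\in\Mat$ (this is Lemma \ref{L:Per-Vor} in the paper). But your proposed argument for that step has a genuine gap. You suggest passing to "the smallest face of $\tau'$ containing $\sigma$" and claim that the rank-$1$ forms of $\tau'$ "assemble into a well-behaved subcone", producing a cone of $\Voro$ containing $\sigma$ with all extremal rays of rank $1$. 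Neither half of this works as stated: the minimal face of a cone of $\Voro$ containing $\sigma'$ may well be the whole cone, and cones of $\Voro$ can have extremal rays generated by rigid forms of rank $\geq 2$ (this happens already for $g\geq 4$, see Remark \ref{R:Voro-nonsimpli}); moreover the positive hull of the rank-$1$ extremal rays of a cone of $\Voro$ is not in general a face of that cone, nor a cone of $\Voro$ at all, so the Erdahl--Ryshkov characterization (Fact \ref{F:Mat-Voro}\eqref{F:Mat-Voro2}) cannot be applied to it. You have in effect reduced the problem to itself.

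The paper closes this gap by a different mechanism, via Dirichlet--Voronoi polytopes. Write the extremal rays of $\sigma'$ as $Q_i=v_iv_i^t$ and consider $\sum_i Q_i$. The hypothesis that all the $Q_i$ lie in a \emph{common} cone $\sigma\in\Voro$ is used exactly once, to invoke the fact (\cite[Prop.~3.3.5]{Val}) that $\Vor(\sum_i Q_i)$ is then the Minkowski sum of the $\Vor(Q_i)$; since each $\Vor(Q_i)$ is a segment, $\Vor(\sum_i Q_i)$ is a zonotope, so $\sum_i Q_i\in\Omat$ and $\sigma_{\Del(\sum_i Q_i)}\in\Mat$ by Remark \ref{R:zonotope}. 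One then identifies $\sigma_{\Del(\sum_i Q_i)}$ with $\sigma(A)$ for the matrix $A$ with columns $v_i$ (using normal fans of zonotopes and Fact \ref{F:Mat-Voro}), and Lemma \ref{L:prop-mat-cones} shows the extremal rays of $\sigma(A)$ are precisely the $Q_i$, whence $\sigma'=\sigma(A)\in\Mat$. If you want to complete your write-up, you should replace your face-analysis sketch with an argument of this kind; the rank-$1$/Erdahl--Ryshkov interplay alone does not suffice.
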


Finally we want to mention that there exists a third well-known admissible decomposition of $\Ort$, namely the central cone decomposition $\Sigma_C$ (see \cite{Koe} and \cite[Sec. (8.9)]{NamT}). The toroidal compactification
$\AgCen$ associated to
$\Sigma_C$ is known to be the normalization of the blow-up of the Satake compactification of $\Ag$ along the boundary
(see \cite{Igu}). However, the comparison of $\Sigma_C$ with $\Per$ and with $\Voro$ seems to be less obvious.
For example, it follows from \cite[Cor. 4.6]{AB}, that $\AgCen$ does not contain an open subset isomorphic to
$\Agmat$ at least if $g\geq 9$. For the same reason,  the Torelli map does not extend to a regular map from $\Mgb$ to $\AgCen$ for $g\geq 9$ (while it does for $g\leq 8$ by \cite{VIGRE}).

The structure of the paper is as follows. In Section \ref{S:admi-deco}, we first recall the definition of an admissible
decomposition of $\Ort$, and then we review the definition and the basic properties of the perfect cone decomposition (Section
\ref{S:Perfect}) and of the 2nd Voronoi decomposition (Section \ref{S:Voro}). In Section \ref{S:matroids}, we briefly review the basic concepts of matroid theory that
we will need throughout the paper, with particular emphasis on Seymour's decomposition theorem of regular matroids
(Section \ref{S:Seymour}). Section \ref{S:matroid-locus}
%starts with  the definition of the matroidal subcone $\Omat\subseteq \Ort$ and its matroidal decomposition $\Mat$.
is devoted to the proof of Theorem \ref{T:thmA} (see Corollary \ref{C:main-result}). Section \ref{S:toroidal} starts with a brief review of the theory of toroidal compactifications of $\Ag$ and ends with a proof of Theorem \ref{T:mainB} (see Theorem \ref{T:compa-toro}).

This paper is meant to be completely self-contained, so we have tried to recall all the preliminary notions necessary to its understanding by readers with a background either on combinatorics or on algebraic geometry.

\section{Positive definite quadratic forms and admissible decompositions}\label{S:admi-deco}

We denote by $\R^{\binom{g+1}{2}}$ the vector space of quadratic forms
in $\R^g$ (identified with $g\times g$ symmetric matrices with
coefficients in $\R$) and by $\O$ the cone in $\R^{\binom{g+1}{2}}$ of positive
definite quadratic forms. The closure $\ov{\O}$ of $\O$ inside $\R^{\binom{g+1}{2}}$ is the cone
of positive semi-definite quadratic forms. We will be working with a partial closure of the cone $\O$ inside $\ov{\O}$, the so called rational closure of $\O$ (see \cite[Sec. 8]{NamT}).

\begin{defi}\label{D:rat-forms}
A positive definite quadratic form $Q$ is said to be \emph{rational} if the null space $\Null(Q)$ of $Q$
(i.e. the biggest subvector space $V$ of $\R^g$ such that $Q$ restricted to $V$ is identically zero)
admits a basis with elements in $\Q^g$.

We will denote by $\Ort$ the cone of rational positive semi-definite quadratic forms.
\end{defi}

The group $\GL_g(\Z)$ acts on the vector space $\R^{\binom{g+1}{2}}$ of quadratic forms
via the usual law $h\cdot Q:= h Q h^t$, where $h\in \GL_g(\Z)$ and $h^t$ is the transpose matrix.
%$Q$ is a quadratic form on $\R^g$.
Clearly the cones $\O$ and $\Ort$ are preserved by the action of $\GL_g(\Z)$.

\begin{remark}\label{rat-qua}
It is well-known (see \cite[Sec. 8]{NamT}) that a positive semi-definite
quadratic form $Q$ in $\R^g$ belongs to $\Ort$ if and only if
there exists $h\in \GL_g(\Z)$ such that
$$hQh^t=
\left(\begin{array}{cc}
Q' & 0 \\
0 &  0 \\
\end{array}\right)
$$
for some positive definite quadratic form $Q'$ in $\R^{g'}$, with $0\leq g'\leq g$.
\end{remark}

The cones $\O$ and its rational closure $\Ort$ are not polyhedral. However they can be subdivided into rational polyhedral subcones in a nice way, as in the following definition (see \cite[Lemma 8.3]{NamT} or \cite[Chap. IV.2]{FC}).

\begin{defi}\label{decompo}
An \emph{admissible decomposition} of $\Ort$ is a collection $\Sigma=\{\sigma_{\mu}\}$ of rational polyhedral cones of
$\Ort$ such that:
\begin{enumerate}[(i)]
\item If $\sigma$ is a face of $\sigma_{\mu}\in \Sigma$ then $\sigma\in \Sigma$;
\item The intersection of two cones $\sigma_{\mu}$ and $\sigma_{\nu}$ of $\Sigma$ is a face of both cones;
\item If $\sigma_{\mu}\in \Sigma$ and $h\in \GL_g(\Z)$ then $h\cdot \sigma_{\mu}
\cdot h^t\in \Sigma$.
\item $\#\{\sigma_{\mu}\in \Sigma \mod \GL_g(\Z)\}$ is finite;
\item $\cup_{\sigma_{\mu}\in \Sigma} \sigma_{\mu}=\Ort$.
\end{enumerate}
We say that two cones $\sigma_{\mu}, \sigma_{\nu}\in \Sigma$ are equivalent if they are conjugated by an element of
$\GL_g(\Z)$. We denote by $\Sigma/\GL_g(\Z)$ the finite set of equivalence classes of cones in $\Sigma$. Given a cone $\sigma_{\mu}\in \Sigma$, we denote by $[\sigma_{\mu}]$ the equivalence class
containing $\sigma_{\mu}$.
\end{defi}

A priori, there could exist infinitely many admissible decompositions of $\Ort$. However,
as far as we know, only three admissible decompositions are known for every integer $g$
%genus $g$
(see \cite[Chap. 8]{NamT} and the references there), namely:
\begin{enumerate}[(i)]
\item The perfect cone decomposition (also known as the first Voronoi
decomposition), which was first introduced in \cite{Vor};
\item The 2nd Voronoi decomposition (also known as the L-type decomposition), which was first introduced in \cite{Vor};
\item The central cone decomposition, which was introduced in \cite{Koe}.
\end{enumerate}
Each of them plays a significant (and different) role in the theory of the
toroidal compactifications of the moduli space of principally
polarized abelian varieties (see \cite{Igu}, \cite{alex1}, \cite{SB}).
We will come back to this later on.

\begin{example}
If $g=2$ then all the above three admissible decompositions coincide.
In Figure \ref{VorFig} we illustrate a section of the
$3$-dimensional cone $\Omega_2^{\rm rt}$, where we represent just some of the infinite
cones of the known admissible decompositions. Note that, for $g=2$, there is only one
$\GL_g(\Z)$-equivalence class of maximal dimensional cones, namely
the principal cone $\prin^0$ (see Example \ref{small-dim}).

\begin{figure}[h]
\begin{center}
\scalebox{.5}{\epsfig{file=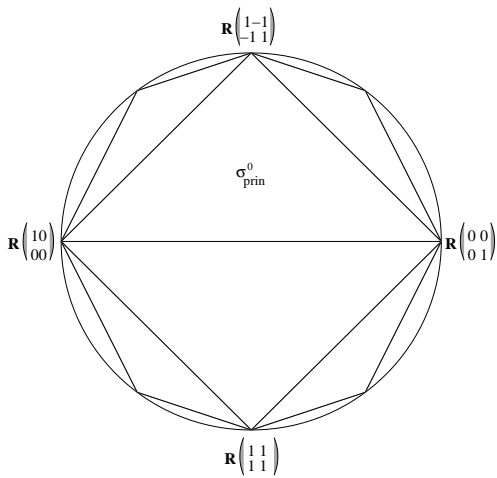}}
\end{center}
\caption{A section of $\Omega_2^{\rm rt}$ and its admissible
decomposition.}
\label{VorFig}
\end{figure}
\end{example}

In this paper, we will be interested in comparing the perfect cone decomposition with the 2nd Voronoi decomposition;
so we start by recalling briefly their definitions.

\subsection{The perfect cone decomposition $\Per$}\label{S:Perfect}
\indent

In this subsection, we review the definition and the main properties of the perfect cone decomposition
(see \cite{Vor} for more details and proofs, or \cite[Sec. (8.8)]{NamT} for a summary).

Consider the function $\mu:\O\to \R_{> 0}$ defined by
$$\mu(Q):=\min_{\xi\in \Z^g\setminus \{0\}} Q(\xi).$$
It can be checked that, for any $Q\in \O$, the set
$$M(Q):=\{\xi\in \Z^g\: : \: Q(\xi)=\mu(Q) \} $$
is finite and non-empty.
%(\textbf{so, in particular,} $\mu(Q)$ \textbf{above is well defined}).
For any $\xi\in M(Q)$, consider the rank one quadratic form $\xi\cdot \xi^t\in \Ort$.
We denote by $\sigma[Q]$ the rational polyhedral subcone of $\Ort$ given by the convex hull of the rank one forms
obtained from elements of $M(Q)$, i.e.
\begin{equation}\label{D:perf-cones}
\sigma[Q]:=\R_{\geq 0}\langle \xi\cdot \xi^t\rangle_{\xi\in M(Q)}.
\end{equation}
One of the main results of \cite{Vor} is the following

\begin{fact}[Voronoi]
\label{F:main-Per}
The set of cones
$$\Per:= \{ \sigma[Q] \: :\:  Q\in \O \} $$
yields an admissible decomposition of $\Ort$, known as the {\bf perfect cone decomposition}.
\end{fact}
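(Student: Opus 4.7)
My plan is to follow Voronoi's original strategy, organized around the notion of a \emph{perfect form}: call $Q \in \O$ perfect if the rank-one forms $\{\xi \xi^t\}_{\xi \in M(Q)}$ span $\R^{\binom{g+1}{2}}$, equivalently if $\sigma[Q]$ is full-dimensional, equivalently if $Q$ is determined up to positive scalar by the data $(\mu(Q), M(Q))$. The strategy is to show that the $\sigma[Q]$ with $Q$ perfect are the maximal cones of $\Per$, that every $\sigma[Q']$ is a face of such a maximal cone, and that two maximal cones meet only along common faces.

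First I would record basic finiteness: since $Q$ is positive definite, $\{\xi \in \Z^g : Q(\xi) \leq \mu(Q)\}$ is bounded, so $M(Q)$ is finite and $\sigma[Q]$ is rational polyhedral. Stability under $\GL_g(\Z)$ (property (iii)) follows from the computation $(h Q h^t)(\xi) = Q(h^t \xi)$, which yields $M(h Q h^t) = h^{-t} M(Q)$ and hence $h \sigma[Q] h^t = \sigma[h^{-t} Q h^{-1}] \in \Per$.

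The heart of the proof is the local analysis of $Q \mapsto \sigma[Q]$. By continuity of the finite family $\{Q \mapsto Q(\xi)\}_{\xi \in M(Q)}$, any form $Q'$ close enough to $Q$ satisfies $M(Q') \subseteq M(Q)$; in particular, $M(Q)$ and $\sigma[Q]$ are constant on the relative interior of $\sigma[Q]$. From this one proves Voronoi's principal theorem: every facet of a perfect cone $\sigma[Q]$ is shared with a unique other perfect cone $\sigma[Q']$, the \emph{contiguous perfect form}, obtained by moving $Q$ along a one-parameter family in the space of symmetric matrices until a new minimal vector appears. Iterating, or equivalently by running Voronoi's reduction algorithm starting from any $Q \in \O$, one finds a perfect $Q'$ with $Q \in \sigma[Q']$, proving the covering property (v) over $\O$; the extension to $\Ort$ follows from Remark \ref{rat-qua} by a limiting argument applied to the block-diagonal description given there.

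Finiteness of orbits (property (iv)) then reduces, via this adjacency structure, to Voronoi's theorem that there are finitely many perfect forms up to $\GL_g(\Z)$, which one obtains by bounding the Hermite invariant $\mu(Q)/\det(Q)^{1/g}$ on perfect forms and applying Mahler's compactness principle. I expect the main obstacle to be the face-to-face property (ii): once facet-adjacency of perfect cones is in hand, one must also exclude more exotic intersections. The standard way out is to show that for any $F \in \sigma[Q] \cap \sigma[Q']$ with $Q, Q'$ perfect one has $M(F) = M(Q) \cap M(Q')$, so that $\sigma[Q] \cap \sigma[Q'] = \sigma[F]$ is a common face; this simultaneously secures property (i), since every face of $\sigma[Q]$ arises as $\sigma[F]$ for an appropriate $F$ in the closure of $\sigma[Q]$.
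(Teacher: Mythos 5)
The paper offers no proof of this statement: it is quoted as a classical theorem of Voronoi, with \cite{Vor} and \cite[Sec.~(8.8)]{NamT} as the references. Your outline is therefore an independent reconstruction, and its global architecture (perfect forms as those with full-dimensional $\sigma[Q]$, contiguity across facets, Voronoi's reduction algorithm for the covering property, finiteness of perfect forms via a bound on the Hermite invariant together with Mahler compactness) is indeed the standard route through the classical literature.

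There is, however, a genuine error at what you call the heart of the proof. The assertion that ``$M(Q)$ and $\sigma[Q]$ are constant on the relative interior of $\sigma[Q]$'' is false, and it reveals a conflation of two objects that are \emph{dual} to one another, not equal. The cone $\sigma[Q]$ is spanned by the rank-one forms $\xi\cdot\xi^t$ for $\xi\in M(Q)$; it is \emph{not} the closure of the set of forms $Q'$ with $M(Q')=M(Q)$. Already for $g=2$ and $Q_0=\left(\begin{smallmatrix}1&1/2\\1/2&1\end{smallmatrix}\right)$ one has $Q_0\notin\sigma[Q_0]$ (the off-diagonal entries in $\sigma[Q_0]$ are $\leq 0$, cf.\ Example \ref{E:Per-g2}), and the relative interior point $\left(\begin{smallmatrix}11&-1\\-1&11\end{smallmatrix}\right)$ of $\sigma[Q_0]$ has four minimal vectors rather than six. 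The correct relation between the two pictures is through the trace pairing $\langle Q',\xi\cdot\xi^t\rangle=Q'(\xi)$: the polyhedron $\{Q' : Q'(\xi)\geq 1 \text{ for all } \xi\in\Z^g\setminus 0\}$ has the normalized perfect forms as its vertices, and $\sigma[P]$ is the cone generated by the ``normals'' $\xi\cdot\xi^t$ of the facets through the vertex $P$. The same confusion undermines your proposed resolution of the face-to-face property: for $F\in\sigma[Q]\cap\sigma[Q']$ the set $M(F)$ need not equal $M(Q)\cap M(Q')$ (indeed $F$ is typically only semi-definite, so $M(F)$ is not even defined), and $\sigma[Q]\cap\sigma[Q']=\sigma[F]$ does not follow. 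Properties (i) and (ii) should instead be extracted from the polyhedral structure of the polyhedron above (the cones $\sigma[P]$ form its ``normal fan''), or from Voronoi's original argument that two perfect domains with overlapping interiors coincide and that adjacent ones meet exactly along the face spanned by the common minimal vectors. Until these two steps are replaced, the argument does not close.
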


The quadratic forms $Q$ such that $\sigma[Q]$ has maximal dimension $\binom{g+1}{2}$ are called \emph{perfect},
hence the name of this admissible decomposition. The interested reader is referred to  \cite{Mar} for more details on perfect forms.

\begin{remark}\label{R:Per-simpli}
\noindent \begin{enumerate}[(i)]
\item The cones $\sigma[Q] \in \Per$ need not be simplicial for $g\geq 4$ (see \cite[p. 93]{NamT}).
\item It follows easily from the definition that the extremal rays of
the cones $\tau\in \Per$ are generated by quadratic forms of rank one.
Moreover, it is easily checked that the cone $\langle Q\rangle$ generated by any rank-1 quadratic forms $Q\in \Ort$ belongs to $\Sigma_P$. In particular, from the properties of an admissible decomposition (see Definition \ref{decompo}), it follows that if $Q\in \Ort$ is a rank-1 quadratic form belonging to a cone $\tau\in \Sigma_P$, then $\langle Q\rangle$ is an extremal ray of $\tau$.
\end{enumerate}
\end{remark}

\begin{example}\label{E:Per-g2}
 Let us compute $\Per$ in the case $g=2$ (compare with Figure \ref{VorFig}).
Let
$R_{12} = \left(\begin{matrix}1&-1\\-1&1\end{matrix}\right)$,
$R_{13} = \left(\begin{matrix}1&0\\0&0\end{matrix}\right)$,
$R_{23} = \left(\begin{matrix}0&0\\0&1\end{matrix}\right).$
Then, up to $\GL_g(\Z)$-equivalence, an easy computation shows that the unique cones in $\Per$ are
\begin{align*}
 &\sigma\left[\left(\begin{matrix}1&1/2\\1/2&1\end{matrix}\right)\right] = \R_{\geq 0} \langle R_{12}, R_{13}, R_{23} \rangle=\left\{\left(\begin{matrix}a+c&-c\\-c&b+c\end{matrix}\right) \: : \: a, b, c \geq 0\right\} , \\
 &\sigma\left[\left(\begin{matrix}1&\lambda\\\lambda&1\end{matrix}\right)\right] = \R_{\geq 0} \langle R_{13}, R_{23} \rangle=\left\{\left(\begin{matrix}a&0\\0&b\end{matrix}\right) \: : \: a, b \geq 0\right\} \text{ for any } -1/2<\lambda <1/2, \\
 &\sigma\left[\left(\begin{matrix}1&\lambda\\\lambda&\mu\end{matrix}\right)\right] = \R_{\geq 0} \langle R_{13} \rangle=
 \left\{\left(\begin{matrix}a&0\\0&0\end{matrix}\right) \: : \: a \geq 0\right\} \text{ for any } \mu>\max\{1, \lambda^2, \pm 2\lambda\}, \\
 &\sigma\left[\left(\begin{matrix}\nu&\lambda\\\lambda&\mu\end{matrix}\right)\right] = \{0\} \text{ for any } \mu, \nu>1, \mu\nu>\lambda^2, \mu+\nu>1\pm 2\lambda.
\end{align*}
%The above cones are the unique cones in $\Per$ up to $\GL_g(\Z)$-equivalence.
%Note that each closed cone $\overline{\sigma_{D_{i+1}}}$ is a face of $\overline{\sigma_{D_i}}$ for $i=1,2,3$.

\end{example}

\subsection{The 2nd Voronoi decomposition $\Voro$}\label{S:Voro}
\indent

In this subsection, we review the definition and main properties of the 2nd Voronoi admissible decomposition
(see \cite{Vor}, \cite[Chap. 9(A)]{NamT} or \cite[Chap. 2]{Val} for more details and proofs).

The 2nd Voronoi decomposition is based on the Delone subdivision $\Del(Q)$ associated to a quadratic form $Q\in \Ort$.

\begin{defi}
  Given $Q \in \Ort$, consider the map $l_Q : \Z^g \to \Z^g \times \R$ sending $x \in \Z^g$ to $(x,Q(x))$. View the image of $l_Q$ as an infinite set of points in $\R^{g+1}$, one above each point in $\Z^g$, and consider the convex hull of these points. The lower faces of the convex hull
  % (the faces that are visible from $(0,-\infty)$)
 can now be projected to $\R^g$ by the map $\pi:\R^{g+1}\to \R^g$ that forgets the last coordinate. This produces an infinite $\Z^g$-periodic polyhedral subdivision of $\R^g$, called the {\bf Delone subdivision} of $Q$ and denoted $\Del(Q)$.
\end{defi}

It can be checked that if $Q$ has rank $g'$ with $0\leq g'\leq g$ then $\Del(Q)$ is a subdivision consisting of polyhedra
such that the maximal linear subspace contained in them has dimension $g-g'$. In particular, $Q$ is positive definite
if and only if $\Del(Q)$ is made of polytopes, i.e. bounded polyhedra.

Now, we group together quadratic forms in $\Ort$ according to the Delone subdivisions that they yield.

\begin{defi}\label{D:open-secondary}
  Given a Delone subdivision $D$ (induced by some $Q_0\in \Ort$), let
  \[ \sigma_D^0 = \{ Q \in \Ort : \Del(Q) = D \}. \]
\end{defi}

It can be checked  that the set $\sigma_D^0$ is a relatively open (i.e. open in its linear span) rational polyhedral cone in $\Ort$.
Let $\sigma_D$ denote the Euclidean closure of $\sigma_D^0$ in $\R^{\binom{g+1}{2}}$, so $\sigma_D$ is a closed rational polyhedral cone and $\sigma_D^0$ is its relative interior. We call $\sigma_D$ the {\bf secondary cone} of $D$.
The reason for this terminology is due to the fact that Alexeev has shown in \cite{alex1} that the 2nd Voronoi
decomposition is an infinite periodic analogue of the secondary fan of Gelfand-Kapranov-Zelevinsky (see \cite{GKZ}).

Now, the action of the group $GL_g(\Z)$ on $\R^g$ induces an action of $GL_g(\Z)$ on the set of Delone subdivisions: given a Delone subdivision $D$ and an element $h\in \GL_g(\Z)$, denote by $h\cdot D$ the Delone subdivision given by the action of $h$ on $D$. Moreover, $\GL_g(\Z)$ acts naturally on the set
of secondary cones $\{\sigma_D:D \text{ is a Delone subdivision of } \R^g\}$ in such a way that
$$h\cdot \sigma_D:=\{hQh^t \: : \: Q\in \sigma_D \}=\sigma_{h\cdot D}.$$
%\begin{figure}%
%\includegraphics[height=3in]{s2_1}%
%\caption{Infinite decomposition of $\widetilde{S}^2_{\geq 0}$ into secondary cones.}%
%\label{f:s2}%
%\end{figure}
Another of the main results of \cite{Vor} is the following

\begin{fact}[Voronoi]
\label{F:main-Vor}
The set of secondary cones
$$\Voro:= \{ \sigma_D : D \text{ is a Delone subdivision of } \R^g \} $$
yields an admissible decomposition of $\Ort$, known as the {\bf 2nd Voronoi decomposition}.
\end{fact}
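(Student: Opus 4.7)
The plan is to verify the five axioms of Definition \ref{decompo} for the collection $\Voro$. The key preliminary step is to describe each relatively open secondary cone $\sigma_D^0$ intrinsically: $Q\in \sigma_D^0$ if and only if the lifted lattice points $\{(v,Q(v))\}_{v\in \Z^g}$ lie on or above a piecewise-affine function on $\R^g$ whose domains of linearity are exactly the maximal cells of $D$. Encoding this condition using the $\Z^g$-periodicity of the lifting, $\sigma_D^0$ is cut out by finitely many linear equations (one per maximal cell, forcing its lifted vertices to lie on a common affine hyperplane in $\R^{g+1}$) together with finitely many strict linear inequalities (forcing every other lattice point to lie strictly above the hyperplane of each adjacent cell). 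This shows $\sigma_D^0$ is a relatively open rational polyhedral cone whose Euclidean closure is $\sigma_D$, and will be the technical backbone of the whole proof.

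Axioms (v) and (iii) are then immediate: every $Q\in \Ort$ lies in $\sigma_{\Del(Q)}^0\subseteq \sigma_{\Del(Q)}$, and the identity $h\cdot \sigma_D=\sigma_{h\cdot D}$ already noted in the text gives $\GL_g(\Z)$-invariance. For axiom (i), turning some of the strict inequalities defining $\sigma_D^0$ into equalities forces additional lattice points to lie on a lifting hyperplane, which merges adjacent cells of $D$ into larger ones and produces a coarsening $D'$ of $D$; the resulting face of $\sigma_D$ is exactly $\sigma_{D'}$, and conversely every face of $\sigma_D$ is obtained in this way. For axiom (ii), any $Q\in \sigma_D\cap \sigma_{D'}$ has $\Del(Q)=D''$ for some $D''$ that is simultaneously a coarsening of $D$ and of $D'$; the set of such common coarsenings admits a unique maximal element $D^*$, namely the Delone subdivision whose maximal cells are the nonempty intersections of a cell of $D$ with a cell of $D'$, and one checks that $\sigma_D\cap \sigma_{D'}=\sigma_{D^*}$, which is a face of both by the face analysis just carried out.

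Axiom (iv) is the genuinely hard step. The strategy is to combine classical reduction theory for positive definite quadratic forms with a direct geometric argument: every $\GL_g(\Z)$-orbit in $\O$ meets a Minkowski-reduced fundamental domain, on which all entries of $Q$ are bounded in terms of the diagonal. A covering-radius argument due to Voronoi then shows that on any bounded set of positive definite forms only finitely many combinatorial types of Delone subdivisions can occur, so there are finitely many secondary cones in $\O$ up to $\GL_g(\Z)$-equivalence. Extension to the rational closure $\Ort$ is handled by stratifying along the locus where $Q$ has a fixed rational null space, conjugating by $\GL_g(\Z)$ as in Remark \ref{rat-qua} to bring the null space into standard position, and reducing to the positive definite case in rank $g'<g$.

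The first three axioms reduce essentially to formal consequences of the convex-hull construction of $\Del(Q)$, but axiom (iv) requires genuine quantitative input from the geometry of numbers and is therefore the main obstacle; it is the real content of Voronoi's original work. A secondary subtlety is identifying the meet $D^*$ in axiom (ii) and verifying that it is actually realised as $\Del(Q)$ for some $Q$, which in turn depends on the linear description of $\sigma_D^0$ established at the outset.
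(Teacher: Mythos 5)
First, note that the paper itself does not prove this statement: it is presented as a \emph{Fact} attributed to Voronoi, with proofs delegated to \cite{Vor}, \cite[Chap.~9(A)]{NamT} and \cite[Chap.~2]{Val}. Your sketch follows the standard route taken in those references: describe $\sigma_D^0$ by finitely many linear equalities and strict inequalities coming from the lifted lattice points (using $\Z^g$-periodicity), deduce axioms (iii) and (v) formally, identify faces with coarsenings of $D$ for (i) and (ii), and obtain the finiteness (iv) from reduction theory. The overall architecture is the right one.

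There is, however, a concrete error in your treatment of axiom (ii). The subdivision whose maximal cells are the nonempty intersections of a cell of $D$ with a cell of $D'$ is the common \emph{refinement} of $D$ and $D'$, not a common coarsening: it refines both, so it cannot equal $\Del(Q)$ for a point $Q\in\sigma_D\cap\sigma_{D'}$, since (as you correctly say one sentence earlier) such a $Q$ has $\Del(Q)$ \emph{coarser} than both $D$ and $D'$. The correct candidate for $D^*$ is the finest common \emph{coarsening}: for instance, if $D$ and $D'$ are two Delone triangulations differing by a single flip inside a quadrilateral, $\sigma_D\cap\sigma_{D'}$ is the secondary cone of the subdivision in which that quadrilateral is left unsubdivided, whereas ``intersections of cells'' would subdivide it further. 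Moreover, even with the right candidate, axiom (ii) is the genuinely delicate point of the fan property: one must show that the finest common coarsening exists as a polyhedral subdivision, that it is realized as $\Del(Q)$ for some $Q$, and that the resulting cone is obtained from $\sigma_D$ by turning some of its defining inequalities into equalities, i.e.\ is a face. Your sketch defers this as a ``secondary subtlety,'' but it is where the real work lies, together with the quantitative input needed for (iv): the precise statement is that for Minkowski-reduced $Q$ the vertices of the Delone polytopes containing the origin lie in a finite subset of $\Z^g$ depending only on $g$, which is Voronoi's key estimate and is not supplied by a generic ``bounded set'' argument, since the reduced domain is an unbounded cone.
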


The cones of $\Voro$ having maximal dimension $\binom{g+1}{2}$ are those of the form $\sigma_D$ for $D$ a Delone subdivision which is a triangulation, i.e. such that $D$ consists only of 
simplices (see \cite[Sec. 2.4]{Val}).

%Given two Delone subdivisions $D$ and $D'$ of $\R^g$, we say that $D$ is a \emph{refinement} of $D'$, and we write 
%$D\preceq D'$, if every polyehdra belonging to $D$ is contained in a polyhedra belonging to $D'$.
%The containment relation between cones of $\Voro$ corresponds to refinements of Delone subdivisions, as it follows 
%from the following well-known results whose proof can be found in the PhD thesis of F. Vallentin 
%(see \cite[Sec. 2.4 and  Sec. 2.6]{Val}).

%\begin{fact}\label{F:refinements}
%\noindent 
%\begin{enumerate}[(i)]
%\item \label{F:refinements1} Given two cones $\sigma_D$ and $\sigma_{D'}$ of $\Voro$, we have that $\sigma_{D'}$ is a face of $\sigma_{D}$ if and only if $D\preceq D'$.
     
%     In particular, the cones of $\Voro$ having maximal dimension $\binom{g+1}{2}$ are those of the form $\sigma_D$ for $D$ a Delone subdivision which is a triangulation, i.e. such that $D$ 
%consists only of simplices.
     
%\item \label{F:refinements2} If $Q, Q'\in \Ort$ belong to the same cone $\sigma_D\in \Voro$, then 
%$D\preceq \Del(Q+Q')\preceq \Del(Q), \Del(Q')$ and $\sigma_{\Del(Q+Q')}$ is the smallest face of $\sigma_D$ containing $\sigma_{\Del(Q)}$ and $\sigma_{\Del(Q')}$.
%\end{enumerate}
%\end{fact}

The following remark should be compared with Remark \ref{R:Per-simpli}.

\begin{remark}\label{R:Voro-nonsimpli}
\noindent
\begin{enumerate}[(i)]
\item The cones $\sigma_D\in \Voro$ need not be simplicial for $g\geq 5$ (see \cite{BT} and \cite{EG}).
\item If $Q\in \Ort$ belongs to a one dimensional cone $\sigma_D\in \Voro$ (or in other words, if $Q$ generates an extremal  ray of some cone of $\Voro$) then $Q$ is said \emph{rigid}.
  Rank-1 quadratic forms $Q\in \Ort$ are easily seen to be rigid. In particular if $Q\in \Ort$ is a rank-1 quadratic form belonging to a cone $\tau\in \Sigma_V$, then the cone $\langle Q\rangle$ generated by $Q$ is an extremal ray of $\tau$.

  However,  rigid forms need not to be of rank one for $g\geq 4$ (see \cite{BG}, \cite{DG0} and \cite{DV}).
\end{enumerate}
\end{remark}

%Although not strictly needed in this paper, we mention, for completeness, that 
There is another way of describing the 2nd Voronoi decomposition via the {\bf Dirichlet-Voronoi
polytope} $\Vor(Q)$ associated to a quadratic form $Q\in \Ort$ (see \cite[Chap. 9(A)]{NamT} or \cite[Chap. 3]{Val} for more details). Given a positive definite quadratic form $Q\in \O$, we define $\Vor(Q)$ as
\begin{equation}\label{def-Vor}
\Vor(Q):=\{x\in \R^g \: :\: Q(x)\leq Q(v-x)
\text{ for all } v\in \Z^g\}.
\end{equation}
More generally, if $Q=h
\left(\begin{array}{cc}
Q' & 0 \\
0 &  0 \\
\end{array}\right)
h^t$ for some $h\in \GL_g(\Z)$ and some po\-si\-ti\-ve definite quadratic form $Q'$ in $\R^{g'}$, $0\leq g'\leq g$ (see Remark \ref{rat-qua}), then $\Vor(Q):=h^{-1}\Vor(Q')(h^{-1})^t\subset
h^{-1}\R^{g'}(h^{-1})^t$.
In particular, the smallest linear subspace $\langle \Vor(Q)\rangle$ containing $\Vor(Q)$
has dimension equal to the rank of $Q$. The integral translates of $\Vor(Q)$
$$\{\Vor(Q)+v\}_{v\in \langle \Vor(Q)\rangle \cap \Z^g}$$
form a face to face tiling (in the sense of \cite{She} and \cite{McM}) of the vector space $\langle \Vor(Q)\rangle$ which is dual to the Delone subdivision
$\Del(Q)$ (see \cite[Chap. 9(A)]{NamT} or \cite[Sec. 3.3]{Val} for details). From this fact, it follows easily that, for a Delone subdivision $D=\Del(Q_0)$ induced by $Q_0\in \Ort$, the cone $\sigma_D^0$ of Definition \ref{D:open-secondary} is also equal to the set of $Q\in \Ort$ such that $\Vor(Q)$ is normally equivalent to $\Vor(Q_0)$, i.e. such that
$\Vor(Q)$ and $\Vor(Q_0)$ have the same normal fan.

\begin{example}
\label{e:Atr2}
  Let us compute $\Voro$ in the case $g=2$ (compare with Figure \ref{VorFig} and with Example \ref{E:Per-g2}).  Combining the taxonomies in \cite[Sec. 4.1, Sec. 4.2]{Val}, we may choose four representatives $D_1, D_2, D_3, D_4$ for $\GL_g(\Z)$-orbits of Delone subdivisions as in Figure \ref{f:subdiv}, where we have depicted the part of the Delone subdivision that fits inside the unit cube in $\R^2$.

\begin{figure}[h]%
\includegraphics[width=3in]{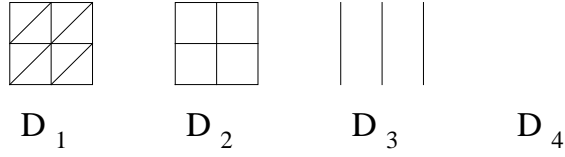}%
\caption{Delone subdivisions for $g=2$ (up to $\GL_g(\Z)$-equivalence).}%
\label{f:subdiv}%
\end{figure}
\noindent We can describe the corresponding secondary cones as follows. Let
$R_{12} = \left(\begin{matrix}1&-1\\-1&1\end{matrix}\right)$,
$R_{13} = \left(\begin{matrix}1&0\\0&0\end{matrix}\right)$,
$R_{23} = \left(\begin{matrix}0&0\\0&1\end{matrix}\right)$ as in Example \ref{E:Per-g2}.
Then
\begin{align*}
 \sigma_{D_1} &= \R_{\geq 0} \langle R_{12}, R_{13}, R_{23} \rangle=\left\{\left(\begin{matrix}a+c&-c\\-c&b+c\end{matrix}\right) \: : \: a, b, c \geq 0\right\} , \\
 \sigma_{D_2} &= \R_{\geq 0} \langle R_{13}, R_{23} \rangle=\left\{\left(\begin{matrix}a&0\\0&b\end{matrix}\right) \: : \: a, b \geq 0\right\} , \\
 \sigma_{D_3} &= \R_{\geq 0} \langle R_{13} \rangle=
 \left\{\left(\begin{matrix}a&0\\0&0\end{matrix}\right) \: : \: a \geq 0\right\} , \\
 \sigma_{D_4} &= \{0\}.
\end{align*}
%Note that each cone $\sigma_{D_{i+1}}$ is a face of $\sigma_{D_i}$ for $i=1,2,3$.
\end{example}

\section{Matroids}\label{S:matroids}

The aim of this section is to recall the basic notions and results of (unoriented) matroid theory that we will
need in the sequel. We follow mostly the terminology and notations
of \cite{Oxl}.

\subsection{Basic definitions}

%\begin{nota}{\emph{Basic definitions}}

There are several ways of defining a matroid (see \cite[Chap. 1]{Oxl}).
We will use the definition in terms of bases (see \cite[Sect. 1.2]{Oxl}).

\begin{defi}\label{matroid}
A {\it matroid} $M$ is a pair $(E(M), \BB(M))$ where $E(M)$ is a finite set, called the {\it ground set}, and $\BB(M)$ is a collection of subsets of $E(M)$, called {\it bases}
of $M$, satisfying the following two conditions:
\begin{enumerate}[(i)]
\item $\BB(M)\neq \emptyset$;
\item If $B_1,B_2\in \BB(M)$ and $x\in B_1\setminus B_2$, then there exists
an element $y\in B_2\setminus B_1$ such that $(B_1\setminus \{x\})\cup \{y\}
\in \BB(M)$.
\end{enumerate}

Given a matroid $M=(E(M), \BB(M))$, we define:
\begin{enumerate}[(a)]
\item The set of independent elements
$$\II(M):=\{I\subset E(M)\,:\, I\subset B \text{ for some } B\in \BB(M)\};$$
\item The set of dependent elements
$$\DD(M):=\{D\subset E(M)\,:\,  D \not\in \II(M)\};$$
\item The set of circuits
$$\CC(M):=\{C\in \DD(M)\,:\, C \text{ is minimal among the elements of } \DD(M)\}.$$
\end{enumerate}
\end{defi}

It can be derived from the above axioms, that all the bases of $M$ have the
same cardinality, which is called the {\it rank} of $M$ and is denoted by $r(M)$.

Observe that each of the above sets $\BB(M)$, $\II(M)$, $\DD(M)$, $\CC(M)$
determines all the others. Indeed, it is possible to define a matroid $M$ in terms of the ground set $E(M)$ and each of the above sets, subject to suitable
axioms (see \cite[Sec. 1.1, 1.2]{Oxl}).

The above terminology comes from the following basic example of
matroids.

\begin{example}\label{rep-mat}
Let $F$ be a field and $A$ an $r\times n$ matrix of rank $r$ over $F$.
Consider the columns of $A$ as elements of the vector space $F^r$,
and call them $\{v_1, \ldots, v_n\}$.
The {\bf vector matroid} of $A$, denoted by $M[A]$, is the matroid
whose ground set is $E(M[A]):=\{v_1,\ldots,v_n\}$ and whose bases are the
subsets of $E(M[A])$ consisting of vectors that form a base of
$F^r$. It follows easily that $\II(M[A])$ is formed by the subsets
of independent vectors of $E(M[A])$; $\DD(M[A])$
is formed by the subsets of dependent vectors and $\CC(M[A])$ is formed
by the minimal subsets of dependent vectors.
\end{example}

The matroids we will deal with in this paper are simple and regular.
Let us begin by recalling the definition of a simple matroid
(see \cite[Pag. 13, Pag. 52]{Oxl}).

\begin{defi}\label{simple-mat}
Let $M$ be a matroid. An element $e\in E(M)$ is called a {\it loop} if
$\{e\}\in \CC(M)$. Two distinct elements $f_1, f_2\in E(M)$ are called
{\it parallel} if $\{f_1, f_2\}\in \CC(M)$; a parallel class of $M$
is a maximal subset $X\subset E(M)$ with the property that
all the elements of $X$ are not loops and they are pairwise
parallel.

$M$ is called {\bf simple} if it has no loops and all the parallel classes
have cardinality one.
\end{defi}

%Given a matroid, there is a standard way to associate to it a simple matroid.

%\begin{defi}\label{simplifi-mat}
%Let $M$ be a matroid. The {\it simple matroid associated} to $M$,
%denoted by $\widetilde{M}$, is the matroid whose ground set
%is obtained by deleting all the loops of $M$ and, for each
%parallel class $X$ of $M$, deleting all but one distinguished
%element of $X$ and whose set of bases is the natural one induced by $M$.
%\end{defi}

%In the Example \ref{rep-mat}, the above concepts translates easily as follows.

\begin{example}
A vector matroid $M[A]$ is simple if and only if
$A$ has no zero columns nor pairs of proportional columns.
 %parallel columns.
 In this case, we say that the matrix $A$ is \emph{simple}.

%In the general case, the simplification $\widetilde{M[A]}$ of $M[A]$ is the vector matroid $M[\w{A}]$ associated with %any simple matrix $\w{A}$ obtained from $A$ by deleting the zero columns and, for each class of parallel columns, %deleting all but one distinguished column. We say that any such $\w{A}$ is a simplification of $A$.
\end{example}

We now recall the definition of regular matroids.

\begin{defi}
A matroid $M$ is said to be {\it representable} over a field $F$ if it is isomorphic to the vector matroid
of a matrix $A$ with coefficients in $F$. A matroid $M$ is said to be
{\bf regular} if it is representable over any field $F$.
\end{defi}

Regular matroids are closely related to totally unimodular matrices or, more generally, to unimodular
matrices.

\begin{defi}\label{D:unimod}
\noindent
\begin{enumerate}
\item A real matrix $A\in M_{g,n}(\R)$ is said to be {\it totally unimodular}
if every square submatrix has determinant equal to $-1$, $0$ or $1$.
A matrix $A\in M_{g,n}(\Z)$ is said to be {\it unimodular} if there exists a matrix
$h\in {\rm GL}_g(\Z)$ such that $h A$ is totally unimodular.

\item We say that two  unimodular matrices $A, B\in M_{g,n}(\R)$
are {\it equivalent} if $A=hBY$ where $h\in {\rm GL}_g(\Z)$ and $Y\in {\rm GL}_n(\Z)$
is a signed permutation matrix.
\end{enumerate}
\end{defi}

\begin{fact}\label{F:reg-thm}
\begin{enumerate}[(i)]
\item A matroid $M$ of rank $r$ is regular if and only if $M=M[A]$ for a
unimodular (equivalently, totally unimodular) matrix $A\in M_{g,n}(\R)$ of rank $r$, where $n=\#E(M)$
and $g$ is a
natural number such that $g\geq r$.
\item Given two unimodular matrices $A, B\in M_{g, n}(\R)$,
we have that $M[A]=M[B]$ if and only if $A$ and $B$ are equivalent.
\end{enumerate}
\end{fact}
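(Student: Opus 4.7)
My plan is to derive Fact~\ref{F:reg-thm} from standard results of matroid theory, as recalled e.g.\ in Oxley's book \cite{Oxl}.

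For part (i), note first that the parenthetical equivalence of \emph{unimodular} with \emph{totally unimodular} in the statement is harmless: if $A\in M_{g,n}(\R)$ is unimodular with $hA$ totally unimodular for some $h\in\GL_g(\Z)$, then left multiplication by the invertible matrix $h$ preserves linear (in)dependence of column subsets, so $M[A]=M[hA]$, and it suffices to treat the totally unimodular case. For the ``if'' direction, suppose $A\in M_{g,n}(\R)$ is totally unimodular of rank $r$; every square minor of $A$ lies in $\{-1,0,1\}\subset\Z$, and hence has the same vanishing behaviour over any field $F$, so the reduction $\overline{A}\in M_{g,n}(F)$ has exactly the same linearly independent column subsets as $A$, showing $M[A]=M[\overline{A}]$ is representable over every $F$, i.e.\ regular. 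For the ``only if'' direction, I invoke Tutte's classical characterization of regular matroids \cite[Ch.~13]{Oxl}: every regular matroid $M$ of rank $r$ on $n$ elements admits a totally unimodular representation $A_0\in M_{r,n}(\R)$ with $M=M[A_0]$; for any $g\geq r$, appending $g-r$ zero rows to $A_0$ produces the desired $A\in M_{g,n}(\R)$ with $M[A]=M$.

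For the easy direction of (ii), if $A=hBY$ with $h\in\GL_g(\Z)$ and $Y\in\GL_n(\Z)$ a signed permutation matrix, the columns of $A$ are obtained from those of $B$ by applying an invertible linear map and then a relabelling with optional sign changes, both of which preserve the linear-independence structure of column subsets; hence $M[A]=M[B]$.

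The converse direction is the main obstacle. The strategy is to reduce both $A$ and $B$ to a common normal form and then compare them. Fix a basis $B_0\in\BB(M[A])=\BB(M[B])$ of size $r$, choose signed permutation matrices $Y_A,Y_B\in\GL_n(\Z)$ that move the columns indexed by $B_0$ into the first $r$ positions, and then multiply on the left by suitable elements of $\GL_g(\Q)$ (eliminating zero rows and rescaling) to bring $AY_A$ and $BY_B$ into a canonical block form $\bigl(\begin{smallmatrix} I_r & * \\ 0 & 0 \end{smallmatrix}\bigr)$. The Brylawski--Lucas signing theorem (see \cite[Sec.~10.1]{Oxl}) guarantees that the $*$-block of a totally unimodular matrix in this normal form is determined by the matroid up to multiplication of columns by $\pm 1$, so after a further signed permutation the two normal forms agree. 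Unwinding the reductions gives $A=hBY$ for some $h\in\GL_g(\Q)$ and a signed permutation $Y$; the crucial last observation is that since both $A=hBY$ and $B=h^{-1}AY^{-1}$ have all minors in $\{-1,0,1\}\subset\Z$, the entries of $h$ and of $h^{-1}$ are forced to lie in $\Z$, i.e.\ $h\in\GL_g(\Z)$, which is precisely where total unimodularity (as opposed to general real representability) is essential.
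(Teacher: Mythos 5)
The paper's own ``proof'' of this Fact is a pure citation: part (i) is \cite[Thm.\ 6.3.3]{Oxl} and part (ii) is deduced from \cite[Prop.\ 6.3.13, Cor.\ 10.1.4]{Oxl} together with the triviality of $\Aut(\R)$. Your proposal unpacks the same standard inputs (Tutte's characterization for (i), unique representability of binary/regular matroids via Brylawski--Lucas for (ii)), so the route is not genuinely different; part (i) and the easy direction of (ii) are fine as written.

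The last step of your converse in (ii), however, fails as stated. You produce \emph{some} $h\in\GL_g(\Q)$ with $A=hBY$ and assert that total unimodularity of $A$ and $B$ ``forces'' $h$ and $h^{-1}$ to be integral. When $r=\rk A<g$ --- exactly the case the paper needs, since it constantly works with matrices of rank at most $g$ --- the matrix $h$ solving $A=hBY$ is not unique, and non-integral solutions exist: for $A=B=\left(\begin{smallmatrix}1\\0\end{smallmatrix}\right)\in M_{2,1}(\Z)$ one has $A=hB$ with $h=\operatorname{diag}(1,\tfrac12)\in\GL_2(\Q)\setminus\GL_2(\Z)$. So nothing is forced; you must \emph{exhibit} an integral $h$, and your reduction, which passes through arbitrary elements of $\GL_g(\Q)$ (``eliminating zero rows and rescaling''), does not do so. The repair is routine but is a genuinely missing idea: since a totally unimodular matrix of rank $r$ has an $r\times r$ minor equal to $\pm1$, the lattice spanned by its columns is a \emph{saturated} rank-$r$ sublattice of $\Z^g$ (all invariant factors equal $1$), hence each of $A$, $B$ can be brought by an element of $\GL_g(\Z)$ --- not merely $\GL_g(\Q)$ --- to the form $\left(\begin{smallmatrix}A'\\0\end{smallmatrix}\right)$ with $A'$ totally unimodular of full row rank $r$. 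In that full-rank situation $h$ \emph{is} unique and equals $A_0B_0^{-1}$ for corresponding invertible $r\times r$ submatrices of determinant $\pm1$, which is where integrality and $\det h=\pm1$ actually come from.
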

\begin{proof}
Part $(i)$ is proved in  \cite[Thm. 6.3.3]{Oxl}. Part $(ii)$ follows easily from
\cite[Prop. 6.3.13, Cor. 10.1.4]{Oxl}, taking into account that $\R$
does not have non-trivial automorphisms.
\end{proof}

%\end{nota}

\subsection{Graphic and cographic matroids}

%\begin{nota}{\emph{Graphic and Cographic matroids}}

There are two matroids that can be naturally associated to a graph: a graphic matroid and a cographic matroid.
We will briefly review these constructions since they will play a key role in the sequel.

Recall first the following basic concepts of graph theory (we follow mostly the terminology of \cite{Die}).
Given a graph $\Gamma$ (which we assume always to be finite, connected and possibly with loops or multiple edges),
denote by $V(\Gamma)$ the set of vertices of $\Gamma$ and by $E(\Gamma)$ the set of edges of $\Gamma$.
Given a set $S\subseteq E(\Gamma)$, the subgraph of $\Gamma$ induced by $S$ is the subgraph whose edges are the edges in $S$ and whose vertices are the vertices of $\Gamma$ which are endpoints of edges in $S$. Given a set $W\subseteq E(\Gamma)$, the subgraph of $\Gamma$ induced by $W$ is the graph whose vertices are the vertices in $W$ and whose edges are the edges of $\Gamma$ whose both endpoints are vertices in $W$.
The \textit{valence} of a vertex $v$, denoted by $\val(v)$, is defined as the number of edges incident to $v$, with the usual convention that a loop around a vertex $v$ is counted twice in the valence of $v$. A graph $\Gamma$ is $k$\textit{-regular} if $\val(v)=k$ for every $v\in V(\Gamma)$. A graph $\Gamma$ is \textit{simple} if $\Gamma$ has no loops nor multiple edges. A graph $\Gamma$ is \textit{$k$-edge connected} (for some $k\geq 2$) if and only if $\Gamma$ cannot be disconnected by deleting $1 \leq s\leq k-1$ edges.

\begin{defi}\label{cycles&bonds}
A \textit{circuit} of $\Gamma$ is a subset $S\subseteq E(\Gamma)$ such that the subgraph of $\Gamma$ induced by $S$
%obtained from $\Gamma$ by contracting all the edges not in $S$,
is $2$-regular. A \textit{cycle} is a disjoint union of circuits.\\
If $\{V_1,V_2\}$ is a partition of $V(\Gamma)$, the set $E(V_1,V_2)$ of all the edges of $\Gamma$ with one end in $V_1$ and the other end in $V_2$ is called a \textit{cut}; a \textit{bond} is a minimal cut, or equivalently, a cut $E(\Gamma_1,\Gamma_2)$
such that the graphs $\Gamma_1$ and $\Gamma_2$ induced by $V_1$ and $V_2$, respectively, are connected.
%is a cut such that the two graphs $\Gamma_1$ and $\Gamma_2$ with $V(\Gamma_1)=V_1$ and $V(\Gamma_2)=V_2$, are connected.
\end{defi}

\begin{defi}\label{grap&cogr-mat}
The {\bf graphic matroid} (or {\it cycle matroid}) of $\Gamma$ is the matroid
$M(\Gamma)$ whose ground set is $E(\Gamma)$
and whose circuits are the circuits of $\Gamma$. The {\bf cographic matroid} (or {\it bond matroid}) of $\Gamma$ is the matroid
$M^*(\Gamma)$ whose ground set is $E(\Gamma)$ and whose circuits
are the bonds of $\Gamma$.
\end{defi}

We summarize the well-known properties of the graphic and cographic matroids that we will need later on in the following

\begin{fact}\label{F:graph-cograph}
Let $\Gamma$ be a (finite connected) graph. Then:
\begin{enumerate}[(i)]
\item $M(\Gamma)$ and $M^*(\Gamma)$ are regular.
\item $M(\Gamma)$ is simple if and only if $\Gamma$ is simple.
$M^*(\Gamma)$ is simple if and only if $\Gamma$ is $3$-edge connected, i.e. $\Gamma$ cannot be disconnected by deleting one or two edges.
\item The rank of $M(\Gamma)$ is the cogenus $g^*(\Gamma):=|V(\Gamma)|-1$ of $\Gamma$. The rank of $M^*(\Gamma)$ is the genus $g(\Gamma):=|E(\Gamma)|-|V(\Gamma)|+1$ of $\Gamma$.
\end{enumerate}
\end{fact}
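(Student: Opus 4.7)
The plan is to verify the three claims in turn, relying on the standard totally unimodular representations of the graphic and cographic matroids together with matroid duality.

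For (i), I would fix an arbitrary orientation of $\Gamma$ and consider the signed vertex-edge incidence matrix $B_\Gamma\in M_{|V(\Gamma)|,|E(\Gamma)|}(\Z)$, whose column for a non-loop edge $e=(u,v)$ has $+1$ in the row of $u$, $-1$ in the row of $v$, and $0$ elsewhere, while loops contribute the zero column. A classical induction (expanding along any row of a square submatrix, each of which has at most two nonzero entries per column with opposite signs) shows that $B_\Gamma$ is totally unimodular, and it is straightforward to check that the minimal linearly dependent sets of columns correspond exactly to the circuits of $\Gamma$, so $M(\Gamma)=M[B_\Gamma]$ and hence $M(\Gamma)$ is regular by Fact \ref{F:reg-thm}. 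For $M^*(\Gamma)$, I would invoke the identification of the cographic matroid with the dual matroid of $M(\Gamma)$, together with the fact that the dual of a regular matroid is regular: concretely, writing $B_\Gamma$ in reduced form $[I_r\,|\,D]$ by integral row operations, the matrix $[-D^t\,|\,I_{n-r}]$ is still totally unimodular and represents the dual.

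For (ii), I would simply unwind Definitions \ref{simple-mat} and \ref{grap&cogr-mat}. A loop of $M(\Gamma)$ is a singleton circuit of $\Gamma$, i.e. a loop-edge of $\Gamma$, and two edges are parallel in $M(\Gamma)$ iff they form a $2$-element circuit, i.e. two distinct edges with the same pair of endpoints. Hence $M(\Gamma)$ is simple iff $\Gamma$ has neither loop-edges nor multiple edges. Dually, a loop of $M^*(\Gamma)$ is a singleton bond, i.e. an edge whose removal disconnects $\Gamma$ (a bridge), and a parallel pair in $M^*(\Gamma)$ is a minimal $2$-edge cut. Thus $M^*(\Gamma)$ is simple iff $\Gamma$ admits no disconnecting set of size $1$ or $2$, which is exactly $3$-edge connectedness.

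For (iii), the rank of $M(\Gamma)$ is by definition the cardinality of a maximal independent set; by Definition \ref{cycles&bonds} an independent set of $M(\Gamma)$ is a subset of $E(\Gamma)$ containing no circuit, hence a forest, and since $\Gamma$ is connected its maximal forests are spanning trees, of size $|V(\Gamma)|-1=g^*(\Gamma)$. The rank formula for the dual matroid, $r(M^*)=|E(M)|-r(M)$, then yields $r(M^*(\Gamma))=|E(\Gamma)|-|V(\Gamma)|+1=g(\Gamma)$.

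The main point requiring care — and the only nontrivial ingredient — is the identification $M^*(\Gamma)=(M(\Gamma))^*$ used in parts (i) and (iii). I would prove this directly by showing that the minimal cocircuits of $M(\Gamma)$, i.e. the minimal edge sets meeting every spanning tree, coincide with the bonds of $\Gamma$: given a bond $E(V_1,V_2)$, every spanning tree must contain at least one of its edges (otherwise the tree would fail to connect $V_1$ to $V_2$), and minimality follows from the connectedness of the induced subgraphs on $V_1$ and $V_2$; conversely, any edge set meeting every spanning tree arises in this fashion. Once this duality is in place, everything else follows from the standard matroid-theoretic facts cited above.
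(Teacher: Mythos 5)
Your proof is correct, and it simply spells out the standard textbook arguments (totally unimodular incidence matrix, duality of regular matroids, identification of the cocircuits of $M(\Gamma)$ with the bonds) behind the results that the paper's proof merely cites from Oxley and \cite{BMV}. Same approach in substance; no gaps.
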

\begin{proof}
Part (i) follows from \cite[Prop. 5.1.3, Prop. 2.2.22]{Oxl}.

Part (ii) for $M(\Gamma)$ follows from \cite[Pag. 52]{Oxl} and for $M^*(\Gamma)$ follows from \cite[Prop. 2.3.14(ii)]{BMV}.

Part (iii) follows from \cite[Pag. 26]{Oxl} and \cite[Formula 2.1.8]{Oxl}.
\end{proof}

\begin{example}\label{E:complete}
Let $K_{g+1}$ be the complete simple graph on $g+1\geq 2$ vertices, i.e. the graph with vertex set $\{v_1,\ldots,v_{g+1}\}$ and edge set $\{e_{ij}\: : \:1 \leq i<j\leq g+1\}$, where $e_{ij}$ is an edge joining $v_i$ and $v_j$.
It is easy to check (see \cite[Prop. 5.1.2, Prop. 5.1.3]{Oxl}) that $M(K_{g+1})$ is a simple regular matroid of rank
$g$ which can be obtained as the vector matroid associated to the simple totally unimodular matrix $A(K_{g+1})\in
M_{g,\binom{g+1}{2}}(\Z)$ whose column vectors are the vectors $\{\vec e_i\: : \: 1\leq i\leq g \}$ and
 $\{\vec e_i-\vec e_j\: : \: 1\leq i<j\leq g\}$ of $\R^g$, where $\{\vec e_1,\ldots,\vec e_g\}$ denotes the canonical bases of $\R^g$.
\end{example}

%\begin{remark}\label{2-isomo}
%Two graphs $\Gamma_1$ and $\Gamma_2$ are $2$-isomorphic (in the sense of Whitney \cite{Whi}) if and only if
%$M(\Gamma_1)= M(\Gamma_2)$ or, equivalently, if and only if
%$M^*(\Gamma_1)= M^*(\Gamma_2)$.
%\end{remark}

%We need to recall the following result (see \cite[3.1.1, 3.1.2, 3.2.1]{Oxl} for a proof).

%\begin{lemma}
%Let $\Gamma$ be a graph. For any subset $I\subset E(\Gamma)=E(M^*(\Gamma))$, we have that
%$\begin{equation}\label{gr-contr}
%M(\Gamma)\setminus I=M(\Gamma\setminus I)
%\end{equation}
%\begin{equation}\label{cogr-contr}
%M^*(\Gamma)\setminus I=M^*(\Gamma/I)
%\end{equation}
%where $\Gamma\setminus I$ (resp. $\Gamma/I$) is the graph obtained from $\Gamma$
%by deleting (resp. contracting) the edges in $I$ and,
%for a matroid $M$ and $I\subset E(M)$, we denote by $M\setminus I$
%the matroid obtained from $M$ by deleting $I$.
%\end{lemma}

%\end{nota}

\subsection{The matroid $R_{10}$}

% \begin{nota}{\emph{The matroid $R_{10}$.}}

Another matroid that will play a key role in the sequel is the matroid  $R_{10}$ introduced in \cite[p. 328]{Sey}.

\begin{defi}\label{D:R_10}
We denote by $R_{10}$ the vector matroid associated to the totally unimodular simple matrix
$$A_{10}:=\left(\begin{matrix}
1&0&0&0&0&-1&1&0&0&1\\
0&1&0&0&0&1&-1&1&0&0\\
0&0&1&0&0&0&1&-1&1&0\\
0&0&0&1&0&0&0&1&-1&1\\
0&0&0&0&1&1&0&0&1&-1\\
\end{matrix}\right).
$$
\end{defi}
It is easy to see that $R_{10}$ is a simple regular matroid of rank $5$.

We mention that, quite recently, the matroid $R_{10}$ has made a striking appearance in algebraic geometry: Gwena has shown in \cite{Gwe} that $R_{10}$ is related to the degenerations
of the intermediate Jacobians associated to a family of cubic threefolds degenerating to the Segre's
cubic in $\mathbb{P}^4$.

%\end{nota}

\subsection{Seymour's decomposition theorem}\label{S:Seymour}

%\begin{nota}{\emph{Seymour's decomposition theorem.}}

Here we review Seymour's decomposition theorem (see \cite{Sey}) which says that regular matroids can be obtained starting from graphic matroids, cographic matroids and the matroid $R_{10}$ via
simple operations called $1$-sum, $2$-sum and $3$-sum. However, since we want a Seymour's decomposition theorem inside the category of simple regular matroids (while Seymour's original formulation works only in the category of
all regular matroids, possibly non simple), we prefer to adopt the slightly modified
constructions of Danilov and Grishukhin (see \cite{DG}) \footnote{Note however that in \cite{DG} the above modified operations are called, respectively, $0$-sum, $1$-sum and $2$-sum (with a shift in the
enumeration!);  however we will keep the original terminology of Seymour to avoid possible confusions.}.

Following \cite[p. 413]{DG}, we will give the definitions of $1$-sum, $2$-sum and $3$-sum of simple regular matroids in terms of
representations as vector matroids of simple totally unimodular matrices.

%Using fact \ref{F:reg-thm}, it is easy to check that these definitions are indeed independent of the chosen representation.

\begin{defi}\label{D:sum-matroids}
Let $M_1$, $M_2$ and $M$ be three simple regular matroids.
% where $A_1$, $A_2$ and $A$ are simple totally unimodular matrices.
\begin{enumerate}[(i)]
\item We say that $M$ is the {\bf $1$-sum} of $M_1$ and $M_2$, and we write $M=M_1\oplus_1 M_2$, if we can write $M_1=M[A_1]$, $M_2=M[A_2]$ and $M=M[A]$
for some simple totally unimodular matrices $A$, $A_1$ and $A_2$ such that
$$A=\left(\begin{matrix}
A_1 & 0 \\
0 & A_2\\
\end{matrix}\right). $$
\item We say that $M$ is the {\bf $2$-sum} of $M_1$ and $M_2$, and we write $M=M_1\oplus_2 M_2$, if we can write $M_1=M[A_1]$, $M_2=M[A_2]$ and $M=M[A]$
for some simple totally unimodular matrices $A$, $A_1$ and $A_2$ such that
$$A_1=\left(\begin{matrix}
B & 0 \\
b^t & 1\\
\end{matrix}\right),
\hspace{0,5cm}
A_2=\left(\begin{matrix}
c^t & 1 \\
C & 0\\
\end{matrix}\right),
\hspace{0,5cm}
A=\left(\begin{matrix}
B & 0 & 0 \\
b^t & c^t & 1\\
0 & C & 0
\end{matrix}\right),
$$
where $B, C$ are matrices and $b, c$ are vectors.
\item We say that $M$ is the {\bf $3$-sum} of $M_1$ and $M_2$, and we write $M=M_1\oplus_3 M_2$, if we can write $M_1=M[A_1]$, $M_2=M[A_2]$ and $M=M[A]$
for some simple totally unimodular matrices $A$, $A_1$ and $A_2$ such that
$$A_1=\left(\begin{matrix}
B & 0 & 0 & 0 \\
b_1^t & 1 & 0& 1\\
b_2^t & 0 & 1 & 1
\end{matrix}\right),
\hspace{0,3cm}
A_2=\left(\begin{matrix}
c_1^t & 1 & 0 & 1\\
c_2^t & 0 & 1 & 1\\
C & 0 & 0 & 0 \\
\end{matrix}\right),
\hspace{0,3cm}
A=\left(\begin{matrix}
B & 0 & 0 & 0 & 0\\
b_1^t & c_1^t & 1 & 0 & 1\\
b_2^t & c_2^t & 0 & 1 & 1\\
0 & C & 0 & 0 & 0
\end{matrix}\right),
$$
where $B, C$ are matrices and $b_1, b_2, c_1, c_2$ are vectors.
\end{enumerate}
\end{defi}

Some remarks are in order.

\begin{remark}\label{R:sum}
\noindent
\begin{enumerate}[(i)]
\item The difference between the above definition (taken from \cite{DG}) and the original definition of Seymour (\cite[Sec. 2]{Sey})
is the following: in (ii) Seymour drops the last column of $A$; in (iii) he drops the last three columns of $A$.
\item In each of the above operations (i), (ii) or (iii),  $A_1$ and $A_2$ are totally unimodular if and only if  $A$ is totally unimodular. The if direction is clear since $A_1$ and $A_2$ are submatrix
of $A$. The only if direction is proved in \cite{Bry}.
\item It is immediate to check that, in each of the above operations (i), (ii) and (iii), if $A$ is simple then $A_1$ and $A_2$ must be simple as well.
Conversely, if we assume that $A_1$ and $A_2$ are simple and totally unimodular then we get that $A$ is simple as well.
This is clear in the operation (i). In the operations (ii) and (iii), it follows from the fact that if $A_1$ (resp. $A_2$) is simple and totally unimodular then $B$ (resp. $C$) cannot have zero column vectors
since $(1)$ cannot be a proper submatrix of a simple totally unimodular matrix of rank $1$ and, similarly, $\displaystyle \left(\begin{matrix}
1 & 0 & 1\\
0 & 1 & 1\\
\end{matrix}\right)$
cannot be a proper submatrix of a simple totally unimodular matrix of rank $2$.
\end{enumerate}
\end{remark}

We can now state the main Theorem of \cite{Sey} (see also \cite{DG}) as follows:

\begin{fact}[Seymour's decomposition theorem]\label{F:Seymour}
Every simple regular matroid can be obtained by means of $1$-sum, $2$-sum and $3$-sum starting from simple graphic matroids, simple cographic matroids and $R_{10}$.
\end{fact}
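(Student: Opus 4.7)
The proof plan is to reduce to Seymour's original decomposition theorem \cite[(14.3)]{Sey} and then translate between Seymour's original $k$-sum operations and the Danilov--Grishukhin variants used in Definition \ref{D:sum-matroids}. The induction is on $n=|E(M)|$, with the base case handled by direct verification that small simple regular matroids are either graphic, cographic, or isomorphic to $R_{10}$.

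For the inductive step, the central notion is that of a \emph{$k$-separation} of a matroid $M$: a partition $E(M)=X_1\sqcup X_2$ with $|X_1|,|X_2|\geq k$ and $r(X_1)+r(X_2)-r(M)\leq k-1$, which is \emph{non-trivial} if $|X_1|,|X_2|\geq k+1$ (for $k\geq 2$). The heart of Seymour's argument is the structural dichotomy: if $M$ is a simple regular matroid which is not graphic, not cographic, and not isomorphic to $R_{10}$, then $M$ admits a non-trivial $k$-separation for some $k\in\{1,2,3\}$. This is obtained by a delicate splitter-type analysis: one shows that $M$ must contain either an $R_{12}$ minor (which forces a $3$-separation) or a $1$- or $2$-separation detectable from connectivity considerations. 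One then verifies that any such non-trivial $k$-separation of a regular matroid can be realized by writing $M=M_1\oplus_k^{\rm Sey}M_2$ in Seymour's original sense, with both $M_i$ regular and strictly smaller than $M$, so the induction closes.

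The remaining point is to pass from Seymour's $k$-sums to the Danilov--Grishukhin $k$-sums of Definition \ref{D:sum-matroids}. The difference, recalled in Remark \ref{R:sum}(i), is that Seymour drops the distinguished identification column(s) in the $2$-sum and $3$-sum, whereas the DG version retains them. Thus the DG $k$-sum $M_1\oplus_k M_2$ of two simple regular matroids is obtained from Seymour's $k$-sum by adjoining, respectively, one element (for $k=2$) or three elements (for $k=3$) corresponding to the pivot columns; conversely, deleting those columns from the DG sum recovers Seymour's sum. One checks by direct inspection of the block matrix formulas that totally unimodular matrices represent the output in both cases (this is the content of Remark \ref{R:sum}(ii), \cite{Bry}), and that the simplicity assertion of Remark \ref{R:sum}(iii) lets one stay inside the category of simple regular matroids at every step of the decomposition.

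The main obstacle is of course the structural dichotomy itself, which is the hard technical core of \cite{Sey} and relies on an intricate case analysis of excluded minors for regularity together with splitter-theorem arguments; once this is granted, the inductive packaging and the translation to the DG formalism are essentially bookkeeping. A subtle point to watch is that in carrying the induction we must check that the $M_i$ produced from a non-trivial separation are themselves \emph{simple} (not merely regular), which is where the enlargement of Seymour's sums to the DG sums is essential: the extra pivot columns precisely prevent the creation of parallel elements or loops in the summands.
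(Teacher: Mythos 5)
The paper offers no proof of this Fact beyond the citations to Seymour \cite{Sey} and Danilov--Grishukhin \cite{DG}, and your proposal reconstructs exactly that route: invoke Seymour's original decomposition theorem (whose hard core is the structural dichotomy you describe) and then translate his $2$- and $3$-sums into the modified sums of Definition \ref{D:sum-matroids} via Remark \ref{R:sum}, keeping track of simplicity. This is the intended argument, so your proposal takes essentially the same approach as the paper.
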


%\end{nota}

\section{The matroidal subcone $\Omat$ and its matroidal decomposition $\Mat$}\label{S:matroid-locus}

The aim of this section is to introduce and study a $\GL_g(\Z)$-invariant closed subcone of the cone $\Ort$ of rational positive semi-definite quadratic forms on $\R^g$, called the matroidal subcone and denoted by $\Omat$, and a natural admissible decomposition of it, which we call the matroidal decomposition and we denote by $\Mat$.
%Here it is the precise definition.

\begin{defi}\label{D:cone-mat}
Let $A\in M_{g,n}(\Z)$ be a simple unimodular matrix (for some $g$ and $n$). Denote its column vectors by
$\{v_1,\ldots, v_n\}\subset \R^g$. Define the closed rational polyhedral cone $\sigma(A)\subset \Ort$ as
$$\sigma(A):=\R_{\geq 0}\langle v_1v_1^t,\ldots, v_nv_n^t\rangle,$$
and denote by $\sigma(A)^0$ its relative interior.
The {\bf matroidal subcone} $\Omat$ of $\Ort$ is defined as
$$\Omat:=\bigcup_A \sigma(A) \subseteq \Ort,
$$
where the union runs over all the matrices $A\in M_{g, n}(\Z)$ as above (for some $n$). The
{\bf matroidal decomposition} of $\Omat$ is the collection $\Mat=\{\sigma(A)\}$, where $A\in M_{g,n}(\Z)$ varies among all the matrices as above (for some $n$).
\end{defi}
Note that the cone $\sigma(A)$ does not depend on the order of the columns of $A$, i.e. if $A=BY$ where
$Y\in {\rm GL}_n(\Z)$ is a signed permutation matrix then $\sigma(A)=\sigma(B)$.

In the following lemma, we collect the main properties of the cones $\sigma(A)$.

\begin{lemma}\label{L:prop-mat-cones}
Let $A, B\in M_{g, n}(\Z)$ be two simple  unimodular matrices. Denote by $\{v_1,\ldots,v_n\}$ the column vectors of $A$ and by $\{w_1\,\ldots, w_n\}$ the column vectors of $B$.
\begin{enumerate}[(i)]
\item The cone $\sigma(A)$ is simplicial and every face is of the form $\sigma(A\setminus I)$ for $I\subset \{1,\ldots, n\}$, where $A\setminus I$ is the matrix obtained from $A$ by deleting the columns corresponding to $I$.
\item $\sigma(A)$ is $\GL_g(\Z)$-equivalent to $\sigma(B)$ if and only if $A$ and $B$ are equivalent.
More precisely, if $A=hBY$ where $h\in {\rm GL}_g(\Z)$ and $Y\in {\rm GL}_n(\Z)$ is a signed permutation matrix, then
$\sigma(A)=h\sigma(B) h^t$.

In particular, the $\GL_g(\Z)$-equivalence classes of cones in $\Mat$ correspond bijectively to simple regular matroids of rank at most $g$. We will denote by $\sigma(M)$ the equivalence class corresponding to such a matroid $M$.
\end{enumerate}
\end{lemma}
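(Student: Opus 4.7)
The plan is to first reduce part (i) to the key linear-algebra claim that $v_1v_1^t,\dots,v_nv_n^t$ are linearly independent in the space of symmetric $g\times g$ matrices. Once this is granted, $\sigma(A)$ is by construction a simplicial cone with extremal rays $\R_{\geq 0}\cdot v_iv_i^t$, and the faces of a simplicial cone are precisely the subcones spanned by subsets of its generators, which in our notation are exactly the cones $\sigma(A\setminus I)$ for $I\subseteq\{1,\dots,n\}$.

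For the linear independence, I would first observe that the $\GL_g(\Z)$-action is by conjugation on symmetric matrices and so preserves linear independence; hence we may assume $A$ is totally unimodular, i.e.\ its columns lie in $\{-1,0,1\}^g$. Restricting to the row span of $A$, we may further assume $\rk A=g$, appealing to Remark \ref{rat-qua} to reduce to the lower-dimensional situation if necessary. A relation $\sum_i\lambda_iv_iv_i^t=0$ is then equivalent to the vanishing of the quadratic polynomial $q(x)=\sum_i\lambda_i(v_i^tx)^2$ on $\R^g$. This step is the main obstacle of the whole argument, and the plan is to invoke Seymour's decomposition theorem (Fact \ref{F:Seymour}) and induct on the structure of the regular matroid $M[A]$: one verifies directly that no nonzero quadratic form can vanish on the columns of the unimodular representatives of the three base cases (simple graphic matroids, simple cographic matroids, and the matroid $R_{10}$ via the explicit matrix $A_{10}$), and then checks that this property is preserved by $1$-sum, $2$-sum and $3$-sum by exploiting the block-triangular structure of the matrices in Definition \ref{D:sum-matroids}.

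For part (ii), the ``if'' direction is immediate: writing $Y$ in terms of a permutation $\pi$ of $\{1,\dots,n\}$ and signs $\epsilon_i\in\{\pm 1\}$, the identity $A=hBY$ gives $v_i=\epsilon_i\,h\,w_{\pi(i)}$, hence $v_iv_i^t=h(w_{\pi(i)}w_{\pi(i)}^t)h^t$ and $\sigma(A)=h\,\sigma(B)\,h^t$. For the ``only if'' direction, assume $\sigma(A)=h\,\sigma(B)\,h^t$ for some $h\in\GL_g(\Z)$. By part (i) both cones are simplicial, with extremal rays $\R_{\geq 0}\cdot v_iv_i^t$ on one side and $\R_{\geq 0}\cdot(hw_j)(hw_j)^t$ on the other, so matching extremal rays yields a permutation $\pi$, signs $\epsilon_i\in\{\pm 1\}$, and positive reals $\alpha_i$ with $v_i=\epsilon_i\,\alpha_i\,h\,w_{\pi(i)}$. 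Since $v_i$ and $hw_{\pi(i)}$ are both primitive vectors in $\Z^g$ (after passing to totally unimodular representatives their entries lie in $\{-1,0,1\}$, and primitivity is preserved by the action of $\GL_g(\Z)$), we must have $\alpha_i=1$ for every $i$, yielding $A=hBY$ where $Y$ is the signed permutation matrix determined by $\pi$ and the $\epsilon_i$. Combining this with Fact \ref{F:reg-thm}(ii) identifies the $\GL_g(\Z)$-equivalence classes of cones in $\Mat$ with simple regular matroids of rank at most $g$, which justifies the notation $\sigma(M)$.
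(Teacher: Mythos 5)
Your proposal is correct in outline but takes a genuinely different route: the paper proves this lemma in two lines by identifying $\sigma(A)$ with the cone of \cite[Construction 4.4.2]{BMV} and quoting \cite[Thm. 4.4.4]{BMV}, whereas you argue from scratch. Everything reduces, as you say, to the linear independence of $v_1v_1^t,\ldots,v_nv_n^t$; granting that, your derivation of (i) is immediate, and your treatment of (ii) --- matching extremal rays of two simplicial cones and using primitivity of the columns of a simple unimodular matrix to force the scalars $\alpha_i$ to equal $1$, then invoking Fact \ref{F:reg-thm}(ii) --- is complete. Your Seymour-based induction for the independence does go through: the $1$-, $2$- and $3$-sum steps follow by restricting a putative relation to the two overlapping diagonal blocks of $A$ (which reproduce exactly the rank-one forms of the columns of $A_1$ and $A_2$) and applying the inductive hypothesis, and the graphic and $R_{10}$ base cases are finite computations. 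The one step you underestimate is the cographic base case: it is not a ``direct verification'' but precisely the nontrivial statement that for a $3$-edge-connected graph $\Gamma$ the $\#E(\Gamma)$ forms cut out by the edge functionals on $H_1(\Gamma,\R)$ are linearly independent, i.e. that cographic cones are simplicial of dimension $\#E(\Gamma)$; this is true and is proved in \cite{BMV} and \cite{AB}, but it requires a genuine argument with circuits and bonds of $\Gamma$. In sum, your route buys self-containedness at the price of Seymour's theorem (which the paper uses anyway for Theorem \ref{T:Mat-cont-Perf}) plus the cographic lemma, while the paper's citation outsources the same content to \cite{BMV}.
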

\begin{proof}
The cone $\sigma(A)$ is the same as the cone constructed in \cite[Construction 4.4.2]{BMV}.
Therefore, (i) follows from \cite[Thm 4.4.4(iii)]{BMV}, while (ii) follows from \cite[Thm 4.4.4(ii)]{BMV} and
Fact \ref{F:reg-thm}.
\end{proof}

From the above lemma, we get that $\Mat$ forms an admissible decomposition of $\Omat$ (compare with
Definition \ref{decompo}).

\begin{cor}
The collection $\Mat=\{\sigma(A)\}$ is an \emph{admissible} decomposition of $\Omat$, i.e.
\begin{enumerate}[(i)]
\item If $\sigma$ is a face of $\sigma(A)\in \Mat$ then $\sigma\in \Mat$;
\item The intersection of two cones $\sigma(A)$ and $\sigma(B)$ of $\Mat$ is a face of both cones;
\item If $\sigma(A)\in \Mat$ and $h\in \GL_g(\Z)$ then $h\cdot \sigma(A)
\cdot h^t\in \Mat$;
\item $\#\{\sigma(A)\in \Mat \mod \GL_g(\Z)\}$ is finite;
\item $\bigcup_{\sigma(A)\in \Mat} \sigma(A)=\Omat$.
\end{enumerate}
\end{cor}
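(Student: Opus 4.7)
The plan is to derive properties (i), (iii), (iv), (v) directly from Lemma \ref{L:prop-mat-cones} together with elementary observations about unimodular matrices, and to establish the substantive property (ii) by appealing to the forthcoming inclusion $\Mat \subseteq \Voro$ of Erdahl-Ryshkov (recalled in Section \ref{S:Mat-Voro}).

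For (i), I would use Lemma \ref{L:prop-mat-cones}(i), which says every face of $\sigma(A)$ is of the form $\sigma(A\setminus I)$; since deleting columns from a simple unimodular matrix preserves both simplicity and unimodularity, the resulting $\sigma(A\setminus I)$ again lies in $\Mat$. For (iii), Lemma \ref{L:prop-mat-cones}(ii) gives $h\cdot \sigma(A)\cdot h^t = \sigma(hA)$ for $h\in \GL_g(\Z)$, and left multiplication by an element of $\GL_g(\Z)$ trivially sends simple unimodular matrices to simple unimodular matrices (directly from Definition \ref{D:unimod}, since such a multiplication changes neither the property of being unimodular nor the property of having nonzero pairwise non-proportional columns). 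For (iv), Lemma \ref{L:prop-mat-cones}(ii) identifies $\Mat/\GL_g(\Z)$ with the isomorphism classes of simple regular matroids of rank at most $g$: choosing a totally unimodular representative of each class, the columns are nonzero pairwise non-proportional vectors in $\{-1,0,1\}^g$, of which there are at most $(3^g-1)/2$, so only finitely many classes occur. Property (v) is immediate from the definition of $\Omat$.

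The genuine content is (ii), and here the plan is to invoke the forthcoming Erdahl-Ryshkov inclusion $\Mat \subseteq \Voro$. Once this is available, both $\sigma(A)$ and $\sigma(B)$ belong to the admissible decomposition $\Voro$ (Fact \ref{F:main-Vor}), so their intersection $\sigma(A)\cap \sigma(B)$ is automatically a face of each. Since this intersection is in particular a face of $\sigma(A)\in \Mat$, Lemma \ref{L:prop-mat-cones}(i) forces it to have the form $\sigma(A\setminus I)$, and hence to lie in $\Mat$, as required.

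The main obstacle is therefore (ii), and more specifically the following subtlety that would have to be overcome in any direct combinatorial proof: if $Q\in \sigma(A)\cap \sigma(B)$ has unique simplicial decomposition $Q=\sum_i \lambda_i v_iv_i^t$ in $\sigma(A)$ with $\lambda_{i_0}>0$, one would need to show that $v_{i_0}=\pm w_j$ for some column $w_j$ of $B$, i.e.\ that every extreme ray of $\sigma(A)$ actually used by $Q$ is also an extreme ray of $\sigma(B)$. This is not transparent from the matrix representations alone, which is why routing the argument through the secondary-fan structure of $\Voro$ is the cleanest strategy.
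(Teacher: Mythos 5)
Your proposal is correct, and for the one substantive item it takes a different route from the paper. The paper offers no written proof: it simply asserts that the corollary follows ``from the above lemma,'' which in turn is a citation of \cite[Thm.\ 4.4.4]{BMV}; the face-to-face property (ii) is implicitly being imported from that reference (where the cones $\sigma(A)$ are shown to assemble into a stacky fan), since Lemma \ref{L:prop-mat-cones} as literally stated (faces are of the form $\sigma(A\setminus I)$, plus the equivalence classification) does not by itself yield that two cones meet in a common face. You instead obtain (ii) by invoking the Erdahl--Ryshkov inclusion $\Mat\subseteq\Voro$ together with the admissibility of $\Voro$, and then pulling the resulting common face back into $\Mat$ via Lemma \ref{L:prop-mat-cones}(i). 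This is logically sound: Fact \ref{F:Mat-Voro} is an external result whose proof does not depend on the corollary, so the forward reference creates no circularity, and it is consistent with the paper's own ordering (the corollary is never used before Section \ref{S:Mat-Voro} in a way that would matter). What the paper's route buys is independence from the Voronoi theory (the fan property is purely combinatorial in \cite{BMV}); what your route buys is an explicit, self-contained argument for (ii) using only facts already displayed in the text. Your treatments of (i), (iii), (iv), (v) --- closure of simple unimodular matrices under column deletion and left $\GL_g(\Z)$-multiplication, and the $(3^g-1)/2$ bound on columns of a simple totally unimodular matrix giving finiteness --- are exactly the elementary verifications the paper leaves tacit, and they are correct.
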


\subsection{$\Mat$ is contained in $\Voro$.}\label{S:Mat-Voro}

In this subsection, we are going to recall the well-known result of Erdhal-Ryshkov (\cite{ER})
according to which every cone of $\Mat$ is a cone of $\Voro$. A key role is played by the concept of lattice
dicing as introduced in \cite[Sec. 2]{ER}. However, we will need a slight generalization of the definition of loc. cit. in order to be able to deal with the cones $\sigma(A)\in \Mat$ such that $A$ has rank smaller than $g$.

\begin{defi}\label{D:latt-dicing}
A {\bf generalized lattice dicing} $\D$ of $\R^g$ (with respect to the standard lattice $\Z^g$) is
a $\Z^g$-periodic polyhedral subdivision of $\R^g$ whose polyhedra are cut out by the affine hyperplanes
$H_i+v:=\{x\in \R^g\: : \: x-v\in H_i\}$, where $v\in \Z^g$ and $\{H_1,\ldots,H_n\}$ is a (possibly empty)
collection of distinct central hyperplanes on $\R^g$ such that
\begin{enumerate}[(i)]
\item If we denote by $w_i$ a non-zero vector normal to the hyperplane $H_i$ (for $1\leq i\leq n$), then
the vector space $V_{\D}:=\langle w_1, \ldots, w_n\rangle \subseteq \R^g$ is defined over $\Q$, i.e. $V_{\D}$ admits a basis of elements of $\Q^g$.
\item If there exists a subset $I\subseteq \{1,\ldots,n\}$ and a collection of vectors $\{v_i\}_{i\in I}\subset \Z^g$ such that the intersection
$$V_{\D}\bigcap_i \{H_i+v_i\}$$
consists of one point $x$ (in this case, we say that $x$ is a \emph{vertex} of $\D$), then $x\in V_{\D}\cap \Z^g$.
\item For any $x\in V_{\D}\cap \Z^g$ and any $H_i$, there exists a unique $v\in V_{\D}\cap \Z^g$ such that
$x\in H_i+v$.
\end{enumerate}
The dimension of $V_{\D}$ is said to be the rank of $\D$ and is denoted by $\rk(\D)$. We say that $\D$ is non-degenerate (or simply that $\D$ is a {\bf lattice dicing}) if $\D$ has rank $g$, i.e. if $V_{\D}=\R^g$.

%By a slight abuse of notation, we will often identify the generalized lattice dicing $\D$ with the collection
%of central hyperplanes that uniquely determines it; in this case, we will also write $\D=\{H_1,\ldots,H_n\}$.
\end{defi}

\begin{remark}\label{R:latt-dicing}
\noindent
\begin{enumerate}[(i)]
\item The above definition of lattice dicing is equivalent to the definition in \cite[Sec. 2]{ER}.
\item If $\D$ is a generalized lattice dicing of rank $0\leq g'\leq g$ as above, then $V_{\D}\cap \Z^g\cong \Z^{g'}$ is a full dimensional lattice in $V_{\D}$ by (i), or equivalently $V_{\D}\cong (V_{\D}\cap \Z^g)\otimes_{\Z}\R\cong \R^{g'}$, and the hyperplanes $\{H_1\cap V_{\D}, \ldots, H_n\cap V_{\D}\}$ induce a lattice dicing of $V_{\D}\cong \R^{g'}$ (with respect to the lattice $V_{\D}\cap \Z^g\cong \Z^{g'}$), which we denote by $\D_{|V_{\D}}$.
\end{enumerate}
\end{remark}

To every simple unimodular matrix, it is possible to associate a generalized lattice dicing as follows.

\begin{lemmadefi}\label{D:latt-matrix}
Let $A\in M_{g, n}(\Z)$ be a simple unimodular matrix of rank $0\leq g'\leq g$. Denote its column vectors
by $\{v_1\,\ldots, v_n\}$ and, for each $1\leq i\leq n$, consider the central hyperplane $H_{v_i}$ of $\R^g$ defined
by $H_{v_i}:=\{x\in \R^g\: : \: v_i^t \cdot x=0 \}$. Then the collection
$\{H_{v_1},\ldots, H_{v_n}\}$ of central hyperplanes
determines a generalized lattice dicing $\D_A$ of $\R^g$ of rank $g'$.
\end{lemmadefi}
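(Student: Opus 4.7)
The plan is to verify the three defining properties (i), (ii), (iii) of Definition \ref{D:latt-dicing} for the central hyperplanes $\{H_{v_1},\ldots,H_{v_n}\}$ attached to the columns $v_1,\ldots,v_n$ of $A$. Condition (i) is immediate: $V_{\D_A}=\langle v_1,\ldots,v_n\rangle$ is the column span of the integer matrix $A$, so it is a rational subspace of $\R^g$ of dimension $g'=\rk(A)$; this also gives $\rk(\D_A)=g'$.

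Before verifying (ii) and (iii) I would make two reductions. First, by the definition of unimodularity there is $h\in \GL_g(\Z)$ such that $hA$ is totally unimodular; the associated hyperplane arrangement is transformed by a $\GL_g(\Z)$-action on $\R^g$, which preserves $\Z^g$ and the structural conditions, so we may assume $A$ is totally unimodular. Second, the arithmetic heart of the argument is the identification $V_{\D_A}\cap\Z^g=A\Z^n$: since all $g'\times g'$ minors of the totally unimodular $A$ lie in $\{-1,0,1\}$ with at least one non-zero, the gcd of these minors equals $1$, so the invariant factors of $A$ are all $1$ and $A\Z^n$ is a saturated sublattice of $\Z^g$. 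Moreover, any $g'$ columns of $A$ that are $\R$-linearly independent form a $\Z$-basis of this lattice: if $B=(v_i)_{i\in I'}$ is a $g\times g'$ submatrix of $A$ of rank $g'$ and $B_J$ is a $g'\times g'$ submatrix of $B$ of rank $g'$, then $\det B_J=\pm 1$ by total unimodularity, so $B_J^{-1}\in \GL_{g'}(\Z)$, and every column $v_j$ of $A$ decomposes as an integer combination of the columns of $B$ via the formula $v_j=\sum_{i\in I'}(B_J^{-1}(v_j)_J)_i\,v_i$.

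With this structural input, condition (ii) follows readily. Suppose $V_{\D_A}\cap\bigcap_{i\in I}(H_{v_i}+u_i)=\{x\}$ for translation vectors $u_i$ giving hyperplanes of the dicing. Pick $I'\subseteq I$ with $|I'|=g'$ such that $\{v_i\}_{i\in I'}$ is an $\R$-basis of $V_{\D_A}$; by the preceding step it is also a $\Z$-basis of $V_{\D_A}\cap\Z^g$. The $g'$ linear conditions $v_i^t x=c_i$ with $i\in I'$ pin down $x$ uniquely inside $V_{\D_A}$, and invoking the invertibility (with determinant $\pm 1$) of a suitable $g'\times g'$ submatrix of the totally unimodular $B$, the coordinates of $x$ in the basis $\{v_i\}_{i\in I'}$ are forced to be integers, whence $x\in V_{\D_A}\cap\Z^g$. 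Condition (iii) is then direct: for $x\in V_{\D_A}\cap\Z^g$ and any $H_{v_i}$, the choice $v=x\in V_{\D_A}\cap\Z^g$ yields $x\in H_{v_i}+x$; since the hyperplane $H_{v_i}+v$ through $x$ is determined by the integer value $v_i^t v=v_i^t x$, it is unique as a translate of $H_{v_i}$ by a lattice vector in $V_{\D_A}$.

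The main obstacle is the structural/arithmetic step that $V_{\D_A}\cap\Z^g$ coincides with $A\Z^n$ and is generated over $\Z$ by any $g'$ $\R$-linearly independent columns of $A$; this relies on the classical fact that totally unimodular matrices have all invariant factors equal to $1$, so their column lattice is saturated in $\Z^g$. Once this input is in place, both (ii) and (iii) reduce to elementary integer linear algebra for totally unimodular matrices, whose square submatrices have determinant $\pm 1$ whenever non-zero.
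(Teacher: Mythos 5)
Your verification of condition (ii) is where the argument breaks. Once you fix $I'$ with $\{v_i\}_{i\in I'}$ an $\R$-basis of $V_{\D_A}$ and write $x=Ba$ with $B=(v_i)_{i\in I'}$, the data pinning down $x$ are $x\in V_{\D_A}$ together with $v_j^t x=c_j$ for $j\in I'$, i.e.\ the square system $(B^tB)a=c$. By Cauchy--Binet, $\det(B^tB)=\sum_J(\det B_J)^2$ is the \emph{number} of nonvanishing maximal minors of $B$, which is usually larger than $1$, so $(B^tB)^{-1}c$ need not be integral; the unimodular submatrix $B_J$ you invoke only gives $a=B_J^{-1}x_J$, which presupposes $x_J\in\Z^{g'}$ --- exactly what is to be proved. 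The failure is real, not just a missing justification: take
$$A=\left(\begin{matrix}1&0\\0&1\\1&1\end{matrix}\right),$$
a simple totally unimodular matrix of rank $2$. Then $V_{\D_A}=\{(a,b,a+b)\}$, and the translated hyperplanes $x_1+x_3=1$ and $x_2+x_3=0$ meet $V_{\D_A}$ in the single point $(2/3,-1/3,1/3)\notin\Z^3$. So condition (ii), read literally, cannot be checked for an arbitrary (even totally unimodular) $A$, and your preliminary reduction to the totally unimodular case does not remove the obstruction. (Your structural lemma --- that $A\Z^n$ is saturated and is generated over $\Z$ by any $g'$ independent columns --- is correct and useful, but it concerns the lattice $V_{\D_A}\cap\Z^g$, not the integrality of the point $V_{\D_A}\cap\bigcap_i(H_{v_i}+u_i)$.)

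This explains why the paper's proof is organized differently: it first replaces $A$ by a $\GL_g(\Z)$-equivalent matrix of the block form $\left(\begin{smallmatrix}A'\\ 0\end{smallmatrix}\right)$ with $A'$ of full rank $g'$. In that normal form $V_{\D_A}=\langle e_1,\dots,e_{g'}\rangle$, so $\Z^g=(V_{\D_A}\cap\Z^g)\oplus(V_{\D_A}^{\perp}\cap\Z^g)$ and intersecting the arrangement with $V_{\D_A}$ is the coordinate projection; conditions (ii) and (iii) then reduce to the full-rank case, which is quoted from Erdahl--Ryshkov rather than reproved. Note that this normalization does real work and is not the cosmetic step your first reduction treats it as: under $A\mapsto hA$ the hyperplanes move by $(h^t)^{-1}$ while the span of the normals moves by $h$, so (ii)--(iii) are not transported by a single lattice automorphism --- indeed, in the example above $hA=\left(\begin{smallmatrix}I_2\\ 0\end{smallmatrix}\right)$ satisfies (ii) while $A$ itself does not. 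A direct argument must therefore either work in the block normal form, or replace the intersection with $V_{\D_A}$ by the assertion that the affine flat $\{x: B^tx=c\}$ contains a lattice point (which your saturation argument, applied to $B^t$, does establish).
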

\begin{proof}
In the case where $A$ has maximum rank $g$, the result is proved in \cite[p. 462]{ER}.

In the general case, up to possibly replacing $A$ with a $\GL_g(\Z)$-equivalent matrix, we may assume that
$\displaystyle A=\left(\begin{matrix}
A'\\
0\\
\end{matrix}\right)$
where $A'\in M_{g',n}(\Z)$ is a simple unimodular matrix of maximal rank $g'$.
In this case, $V_{\D_A}=\langle e_1,\ldots,e_{g'}\rangle$ where $\{e_i\}$ is the standard basis of $\R^g$;
in particular, $V_{\D_A}$ is defined over $\Q$. Moreover, it is clear that the collection of hyperplanes
$\{H_{v_1}\cap V_{\D_A}, \ldots, H_{v_n}\cap V_{\D_A}\}$ defines the lattice dicing $\D_{A'}$. We deduce that
the collection $\{H_{v_1},\ldots, H_{v_n}\}$ satisfies properties (ii) and (iii) of Definition
\ref{D:latt-dicing} and we are done.
\end{proof}

We can now summarize the results of \cite{ER} in the following

\begin{fact}[Erdahl-Ryshkov]\label{F:Mat-Voro}
\noindent
\begin{enumerate}[(i)]
\item \label{F:Mat-Voro0} Every generalized lattice dicing of  $\R^g$ is of the form $\D_A$ for some
simple unimodular matrix $A\in M_{g,n}(\R)$.
\item \label{F:Mat-Voro1} For every simple unimodular matrix $A\in M_{g, n}(\Z)$, the generalized lattice dicing
$\D_A$ is a Delone subdivision and moreover we have that
$$\sigma(A)=\sigma_{\D_A}.$$
In particular, every cone of $\Mat$ is a cone of $\Voro$.
\item  \label{F:Mat-Voro2} For a cone $\sigma_D\in \Voro$, the following conditions are equivalent:
\begin{enumerate}[(a)]
\item $\sigma_D\in \Mat$;
\item $D$ is a generalized lattice dicing;
\item The extremal rays of $\sigma_D$ are generated by rank one positive semi-definite quadratic forms.
\end{enumerate}
\end{enumerate}
\end{fact}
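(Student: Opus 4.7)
The plan is to prove the three parts in the order (ii), (i), (iii), since the second part provides the construction and dictionary that drives the other two. Throughout, the guiding idea is that for a simple unimodular matrix $A$ with columns $v_1,\dots,v_n$, the quadratic form
\[
Q_\lambda := \sum_{i=1}^n \lambda_i\, v_i v_i^t, \qquad \lambda_i > 0,
\]
has Dirichlet--Voronoi polytope dual to $\D_A$, so its Delone subdivision is exactly $\D_A$.

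For part (ii), I would first verify directly that $\D_A$ is a polyhedral subdivision: after reducing via Remark \ref{R:latt-dicing}(ii) to the full-rank case where the $v_i$ span $\R^g$, unimodularity of $A$ guarantees that at every vertex of $\D_A$ the incident hyperplanes intersect in a single lattice point, and condition (iii) of Definition \ref{D:latt-dicing} (which follows from unimodularity: any lattice point can be written using a basis selected among the $v_i$) gives the $\Z^g$-periodicity. Next, I would compute $\Del(Q_\lambda)$ for $\lambda_i>0$. For each $x\in\Z^g$, the lifted point $(x,Q_\lambda(x))$ is supported by an affine functional iff $x$ belongs to a bounded region of the arrangement $\{H_{v_i}+v : v\in \Z^g\}$; an explicit computation of the lower convex hull identifies the projected maximal cells with the maximal cells of $\D_A$. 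Hence $\Del(Q_\lambda)=\D_A$ for every such $\lambda$, which places $\sigma(A)^0$ inside $\sigma_{\D_A}^0$; since both cones are closed rational polyhedral and have the same relative interior, the equality $\sigma(A)=\sigma_{\D_A}$ follows, and in particular $\sigma(A)\in \Voro$.

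For part (i), given a generalized lattice dicing $\D$ of rank $g'$ with defining hyperplanes $H_1,\dots,H_n$ and primitive integer normals $w_1,\dots,w_n$ chosen (using condition (i) of Definition \ref{D:latt-dicing}) so that the $w_i$ lie in $\Z^g\cap V_\D$, I would assemble these into the matrix $A$ whose columns are the $w_i$. Conditions (ii) and (iii) of Definition \ref{D:latt-dicing} translate exactly into the statement that every maximal linearly independent subfamily of the $w_i$ forms a $\Z$-basis of $\Z^g\cap V_\D$; this property is equivalent, after a $\GL_g(\Z)$ change of basis sending $\Z^g\cap V_\D$ to $\Z^{g'}\times\{0\}$, to total unimodularity of the resulting $g'\times n$ block. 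Thus $A$ is unimodular and simple (no repeated or proportional columns, since the $H_i$ are distinct central hyperplanes), and by construction $\D=\D_A$.

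For part (iii), the implication (a)$\Rightarrow$(b) is immediate from the construction of $\D_A$, and (a)$\Rightarrow$(c) is clear since the extremal rays of $\sigma(A)$ are the rank-one forms $\langle v_iv_i^t\rangle$; the implication (b)$\Rightarrow$(a) is exactly part (i) combined with part (ii). The substantive direction is (c)$\Rightarrow$(a): starting from $\sigma_D\in \Voro$ whose extremal rays are generated by rank-one forms $v_iv_i^t$, I would form the matrix $A$ with columns $v_i$ (chosen primitive in $\Z^g$) and show that $A$ is unimodular. The strategy is to exploit the duality between $D$ and the Dirichlet--Voronoi tiling: the facets of any Voronoi cell of a form $Q\in \sigma_D^0$ have normal directions among the $v_i$, and the lattice of facet-vectors of $\Vor(Q)$ generates $\Z^g\cap\langle\Vor(Q)\rangle$; this forces every basis of $\R^{g'}$ extracted from the $v_i$ to be a lattice basis, i.e. unimodularity. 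Then by (ii), $\sigma_D=\sigma(A)\in \Mat$.

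The main obstacle I expect is the implication (c)$\Rightarrow$(a) in part (iii): one must rule out that a secondary cone could have all extremal rays rank-one without the corresponding vectors fitting together unimodularly. Rigid rank-one forms are easy to handle individually (Remark \ref{R:Voro-nonsimpli}(ii)), but the global compatibility required to conclude unimodularity relies on a careful analysis of how rank-one extremal rays of a single secondary cone must interact combinatorially via the shared Delone subdivision, and this is where the argument of Erdahl--Ryshkov in \cite{ER} provides the crucial input.
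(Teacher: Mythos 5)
Your proposal is more ambitious than what the paper actually does here: this statement is a \emph{Fact} whose proof in the paper is essentially a citation of Erdahl--Ryshkov (\cite[p.~462]{ER} for (i), \cite[Thm.~3.2 and Thm.~4.1]{ER} for (ii), \cite[Thm.~4.3]{ER} for the equivalence of (a) and (c)), plus the reduction of the degenerate case to the full-rank case by writing $A$, up to $\GL_g(\Z)$-equivalence, in the block form $\left(\begin{smallmatrix}A'\\ 0\end{smallmatrix}\right)$ with $A'$ of maximal rank. Since you too end up invoking \cite{ER} for the implication (c)$\Rightarrow$(a), your route is not different in substance; but your sketch of part (ii) contains a genuine logical slip.

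Concretely: verifying $\Del(Q_\lambda)=\D_A$ for every $\lambda$ with all $\lambda_i>0$ only yields the inclusion $\sigma(A)^0\subseteq\sigma_{\D_A}^0$. You then assert that the two cones ``have the same relative interior,'' but that is precisely the reverse inclusion, which you never establish: one must show that \emph{every} $Q$ with $\Del(Q)=\D_A$ is a strictly positive combination of the $v_iv_i^t$. This is the harder half of \cite[Thm.~3.2, Thm.~4.1]{ER} and cannot be omitted, because without it $\sigma(A)$ could a priori be a proper, non-face subcone of $\sigma_{\D_A}$, in which case the conclusion ``every cone of $\Mat$ is a cone of $\Voro$'' would fail. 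Likewise, in (iii) the claim that the facet normals of $\Vor(Q)$ for $Q\in\sigma_D^0$ are ``among the $v_i$'' arising from the extremal rays of $\sigma_D$ is exactly the nontrivial content of \cite[Thm.~4.3]{ER}; you correctly flag this, and the honest course (which the paper takes) is simply to cite it. The parts of your argument that are genuinely your own --- the reduction to the full-rank case and the translation in part (i) between conditions (ii)--(iii) of Definition \ref{D:latt-dicing} and unimodularity of the matrix of normals --- are sound and consistent with the paper's treatment.
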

\begin{proof}
Part \eqref{F:Mat-Voro0} is proved in \cite[p. 462]{ER} for lattice dicings (i.e. in the case of maximal rank $g$) and it is easily extended to generalized lattice dicings by looking at the lattice dicing $\D_{|V_{\D}}$ induced by $\D$ on $V_{\D}$ (see Remark \ref{R:latt-dicing}).

Under the assumption that $A$ has full rank $g$, part \eqref{F:Mat-Voro1} follows from \cite[Thm. 3.2 and Thm. 4.1]{ER} since
our cone $\sigma(A)$ (see Definition \ref{D:cone-mat}) coincides with the closure of the domain $\Phi(\D_A)$
of the lattice dicing $\D_A$ defined in \cite[Def. 3.1]{ER}. The extension to the general case follows easily as in Lemma-Definition \ref{D:latt-matrix}: up to replacing $A$ with a $\GL_g(\Z)$-equivalent matrix, we can write
 $\displaystyle A=\left(\begin{matrix}
A'\\
0\\
\end{matrix}\right)$
where $A'\in M_{g',n}(\Z)$ is a simple unimodular matrix of maximal rank $g'$ and then we deduce the assertion for $A$ from the analogous assertion for $A'$.

Part \eqref{F:Mat-Voro2}: the equivalence of (a) and (b) follows from part \eqref{F:Mat-Voro0}.

%the fact that every generalized lattice dicing $\D$ is of the form $\D_A$ for some simple totally unimodular matrix $A\in %M_{g,n}(\Z)$. This fact is proved in \cite[p. 462]{ER} for lattice dicings (i.e. in the case of maximal rank $g$) and it is %easily extended to generalized lattice dicings by looking at the lattice dicing $\D_{|V_{\D}}$ induced by $\D$ on $V_{\D}$ (see %Remark \ref{R:latt-dicing}).

The equivalence of (a) and (c) is the content of \cite[Thm. 4.3]{ER}.
\end{proof}

There is another well-known characterization of the subcone $\Omat\subseteq \Ort$ in terms of Dirichlet-Voronoi polytopes.

\begin{remark}\label{R:zonotope}
A quadratic form $Q\in \Ort$ belongs to the matroidal subcone $\Omat$ if and only if its Dirichlet-Voronoi polytope
$\Vor(Q)$ is a \emph{zonotope}, i.e. a Minkowski sum of segments, or equivalently,
an affine projection of an hypercube. See e.g. \cite[Sec. 4.4]{BMV} and the references therein.
\end{remark}

\begin{example}\label{small-dim}
It is well-known that $\Mat$ is not pure-dimensional, i.e. the maximal cones of $\Mat$ are not of the same dimension
(see e.g. \cite[Chap. 4]{Val} and the references therein).
It is a classical result of Korkine-Zolotarev (\cite{KZ} or \cite[Thm. 5.2]{ER}) that, up to $\GL_g(\Z)$-equivalence, there is only one cone of $\Mat$ of maximum dimension $\binom{g+1}{2}$, namely the so-called {\bf principal cone} (or first perfect domain), which can be defined as (see \cite[Chap. 8.10]{NamT} and \cite[Chap. 2.3]{Val}):
\begin{equation}\label{E:prin}
\prin:=\{Q=(q_{ij})\in \Ort \: : \: q_{ij}\leq 0 \text{ for } i\neq j, \:
\sum_{j} q_{ij}\geq 0 \text{ for all } i\}.
\end{equation}
Indeed, the principal cone admits two well-known alternative descriptions:

\begin{enumerate}[(i)]
\item The $\GL_g(\Z)$-equivalence class $[\prin]$ of the principal
cone $\prin$ is equal to $\sigma(M(K_{g+1}))$, where $K_{g+1}$ is the complete
simple graph on $(g+1)$-vertices (see e.g. \cite[Lemma 6.1.3]{BMV} for a proof).
\item The interior of $\prin$ consists of all the quadratic forms $Q\in \O$ whose Dirichlet-Voronoi polytope
is normally equivalent to the permutahedron of dimension $g$ (see \cite[Ex. 0.10]{Zie}).
See e.g. \cite[Sec. 3.3.2]{Val} and the references therein.
%which is an extremal Dirichlet-Voronoi polytope in the sense that it has
%the maximum possible number of $d$-dimensio\-nal faces among all Dirichlet-Voronoi polytopes of dimension $g$
%(see \cite[Sec. 3.3.2]{Val} and the references there).
\end{enumerate}
If $g=2,3$ then the principal cone $\prin$ is the unique maximal cone in $\Mat=\Voro$, up
to $\GL_g(\Z)$-equivalence (see \cite[Thm. 5.3]{ER}).

However, for $g\geq 4$, the matroidal decomposition $\Mat$ becomes quickly much smaller than $\Voro$ as $g$ grows
(and therefore the matroidal subcone $\Omat$ becomes smaller than $\Ort$). For small values of $g$, the number of equivalence classes of maximal cells of $\Mat$ and $\Voro$ are as follows (see \cite[Sec. 9]{DG} and \cite[Chap. 4]{Val}):
\begin{enumerate}[(i)]
\item  For $g=4$, $\Voro$ has $3$ maximal cells while $\Mat$ has two maximal cells of dimensions $10$ and $9$;
%$A_4^{\rm zon}$ has two maximal cells: $\Cpr$ of dimension $10$ and $C(M^*([K_{3,3}]_2))$ of dimension
%$9$, where $K_{3,3}$ is the complete bipartite graph on $(3,3)$-vertices;
\item For $g=5$, $\Voro$ has $222$ maximal cells while $\Mat$ has $4$ maximal cells of dimensions
$15$, $12$, $12$ and $10$;
\item For $g=6$, $\Voro$ has more than $250,000$ maximal cells (although the exact number is still not known!)
    while $\Mat$ only $11$ maximal cells, $8$ of which have dimension $15$ and the others have dimensions $21$, $16$ and $12$.
\end{enumerate}
\end{example}

\subsection{$\Mat$ is contained in $\Per$.}

The aim of this subsection is to prove the following

\begin{thm}\label{T:Mat-cont-Perf}
We have that $\Mat\subseteq \Per$, i.e. every cone of $\Mat$ is a cone of $\Per$.
\end{thm}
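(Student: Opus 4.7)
The plan is to invoke Seymour's decomposition theorem (Fact \ref{F:Seymour}): it suffices to show (a) $\sigma(M)\in \Per$ for $M$ a simple graphic matroid, a simple cographic matroid, or $R_{10}$, and (b) each Seymour sum operation $\oplus_k$ ($k=1,2,3$) preserves the property ``$\sigma(M)\in\Per$''. Combined with the face-closure axiom of admissible decompositions (Definition \ref{decompo}(i)) and the observation that a lower-rank matroid $M$ of rank $r<g$ embeds as a face of the $1$-sum of $M$ with $g-r$ copies of the trivial rank-one matroid, this gives $\sigma(A)\in\Per$ for every simple unimodular $A\in M_{g,n}(\Z)$. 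For the base cases, the graphic case is immediate because any simple graphic matroid of rank $\le g$ is a submatroid of $M(K_{g+1})$, so by Lemma \ref{L:prop-mat-cones}(i) its cone is a face of the principal cone $\sigma(M(K_{g+1}))=[\prin]$, which is a classical perfect cone. The cographic case is \cite[Thm.\ 5.6]{AB}. For $R_{10}$, I would exploit the $\Z/5$-cyclic symmetry of $A_{10}$ and look for a circulant perfect form $Q_{R_{10}}$ on $\R^5$ with diagonal $c$, $Q_{i,i\pm 1 \bmod 5}=\alpha$ and $Q_{i,i\pm 2\bmod 5}=\beta$; imposing $Q_{R_{10}}(v)=c$ on the ten column vectors of $A_{10}$ collapses to the single linear constraint $2\alpha-\beta=c$, and for $\alpha$ in an open interval (e.g.\ $c/4<\alpha<c/2$) a finite case-check on integer vectors of $\R^5$ confirms positive-definiteness of $Q_{R_{10}}$ and $M(Q_{R_{10}})=\pm\{\text{columns of }A_{10}\}$, so $\sigma[Q_{R_{10}}]=\sigma(R_{10})$.

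For step (b), the strategy is, given perfect forms $Q_1,Q_2$ with $\sigma[Q_i]=\sigma(A_i)$, to construct a perfect form $Q$ on $\R^{g_1+g_2-k+1}$ realizing $\sigma(A_1\oplus_k A_2)$ by overlaying $Q_1$ and $Q_2$ along the rows and columns corresponding to the shared unit columns of the Seymour block decomposition (Definition \ref{D:sum-matroids}). The $1$-sum is straightforward: after rescaling so $\mu(Q_1)=\mu(Q_2)=\mu$, the block-diagonal form $Q=Q_1\oplus Q_2$ satisfies $Q(\xi_1,\xi_2)=Q_1(\xi_1)+Q_2(\xi_2)\ge\mu$, with equality iff one summand vanishes and the other lies in $M(Q_i)$, matching the columns of $A_1\oplus_1 A_2$. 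For the $2$- and $3$-sums, the shared unit columns of $A_i$ automatically belong to $M(Q_i)$, pinning the corresponding diagonal entries of $Q_i$ to $\mu(Q_i)$; after rescaling to a common $\mu$, one defines $Q$ by identifying the shared rows and columns of $Q_1$ and $Q_2$ and subtracting an appropriate correction on the overlap diagonal block, so that $Q(\xi)=Q_1(\pi_1\xi)+Q_2(\pi_2\xi)-\mu\,\|\text{overlap of }\xi\|^2$ for the natural projections $\pi_j$ onto the two $Q_j$-factors. The unimodularity constraints in Remark \ref{R:sum}(iii) force the coupling entries to lie in $\{-1,0,1\}$, which is what makes each column of $A_1\oplus_k A_2$ evaluate to the common value $\mu$.

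The main obstacle is verifying this construction in the $k=2,3$ cases: one must show both (i) positive-definiteness of the glued $Q$, which is not automatic from positive-definiteness of $Q_1,Q_2$ and requires a Schur-complement analysis exploiting that the shared columns realize $\mu(Q_j)$, imposing sharp bounds on the coupling data; and (ii) that $M(Q)$ has no spurious minima beyond the columns of $A_1\oplus_k A_2$. For (ii), any putative extra minimum vector $\xi\in\Z^{g_1+g_2-k+1}$ with $Q(\xi)=\mu$ has $Q_j(\pi_j\xi)\le\mu$ modulo the overlap correction; perfectness of $Q_j$ then forces $\pi_j\xi\in M(Q_j)\cup\{0\}$, and a finite case-check using again the total-unimodularity of the Seymour building blocks identifies $\xi$ with a column of $A_1\oplus_k A_2$. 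This projection-and-bookkeeping argument, together with the delicate positive-definiteness check, is where I expect the bulk of the technical work to lie.
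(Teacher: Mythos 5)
Your skeleton --- Seymour's theorem, the three base cases (graphic, cographic, $R_{10}$), and closure of ``$\sigma(M)\in\Per$'' under the three sums --- is exactly the paper's strategy, and your graphic and cographic cases coincide with the paper's. For $R_{10}$ you take a genuinely different route: the paper does not produce a form whose minimal vectors are exactly the ten columns of $A_{10}$; it instead realizes $\sigma(A_{10})$ as a face of the $15$-dimensional perfect cone $\sigma[Q_5]$ of the lattice $\bbD_5$ (whose $20$ minimal vectors contain the ten columns) by exhibiting an explicit supporting linear functional. Your circulant ansatz with the constraint $2\alpha-\beta=c$ is consistent and in principle can work (such a $Q$ must exist once one knows $\sigma(A_{10})\in\Per$, and averaging over the cyclic symmetry makes it circulant), but the heart of it --- the finite reduction showing that \emph{no} nonzero integer vector outside $\pm\{\text{columns of }A_{10}\}$ attains the minimum on your interval of $\alpha$ --- is asserted, not done; this is precisely the kind of verification where the argument can silently fail.

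The genuine gap is in the $2$- and $3$-sum steps. Your gluing formula $Q(\xi)=Q_1(\pi_1\xi)+Q_2(\pi_2\xi)-\mu\|\text{overlap of }\xi\|^2$ corrects only the diagonal overlap block and leaves the off-diagonal block between the two non-overlap factors equal to zero. Writing $\ov{Q_1}=\mm{Q_1}{r_1}{r_1^t}{1}$ and $\ov{Q_2}=\mm{1}{r_2^t}{r_2}{Q_2}$ and minimizing your glued form over the shared variable gives $\bigl[Q_1(\xi_1)-\langle r_1,\xi_1\rangle^2\bigr]+\bigl[Q_2(\xi_2)-\langle r_2,\xi_2\rangle^2\bigr]-2\langle r_1,\xi_1\rangle\langle r_2,\xi_2\rangle$, and the cross term can outweigh the two nonnegative brackets, so positive definiteness genuinely fails in general (already visible for $g_1=g_2=1$). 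The correct construction must include a nonzero coupling block --- $r_1r_2^t$ for the $2$-sum, and $M=\frac{1}{3}(4r_1r_2^t+4s_1s_2^t+2r_1s_2^t+2s_1r_2^t)$ for the $3$-sum, where moreover the overlap quadratic is $x^2-xy+y^2$, not $x^2+y^2$ --- chosen exactly so that the cross terms cancel and the minimum over the shared variables becomes a sum of two nonnegative Schur complements. Separately, your mechanism for excluding spurious minima does not work as stated: ``perfectness of $Q_j$'' is not an available hypothesis (well-suited forms need not be perfect, and the diagonal blocks $Q_j$ are not even well-suited), and what is actually needed is the quantitative bound $Q_i(\xi_i)-\langle\xi_i,r_i\rangle^2\geq 3/4$ (and its $3$-sum analogue) for every nonzero integer $\xi_i$, obtained by comparing the real minimizer of the shared variable with its integer neighbours; two such terms then sum to $3/2>1$, which is what rules out minimal vectors with both $\xi_1,\xi_2\neq 0$. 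Without the coupling blocks and this estimate, steps (i) and (ii) of your plan do not go through.
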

\begin{proof}
We have to show that for any simple regular matroid $M$ of rank at most $g$, the equivalence class $\sigma(M)$
belongs to $\Per/\GL_g(\Z)$. The strategy is to prove this for graphic matroids, for cographic matroids and for the matroid $R_{10}$ and then apply Seymour's decomposition theorem (see Fact \ref{F:Seymour}). Let us first check the statement for $M$ belonging to each of the above classes.

\un{Graphic matroids}: Let $M=M(\Gamma)$ (see Definition \ref{grap&cogr-mat}), for $\Gamma$ a simple connected graph of  cogenus $g^*(\Gamma)=|V(\Gamma)|-1\leq g$. Clearly, $\Gamma$ can be obtained from the complete simple graph $K_{g+1}$ on $g+1$ vertices by deleting some of its edges. This means that, if we denote by $A(K_{g+1})\in M_{g, \binom{g+1}{2}}(\Z)$ a simple unimodular matrix representing the matroid $M(K_{g+1})$, then we can chose a simple unimodular matrix representing $\Gamma$ and having the form $A(\Gamma)=A(K_{g+1})\setminus I$, for a certain $I\subset \{1,\ldots, \binom{g+1}{2}\}$ which corresponds to the edges that we have deleted from $K_{g+1}$ in order to obtain $\Gamma$.
By Lemma \ref{L:prop-mat-cones}, $\sigma(A(\Gamma))$ is a face of $\sigma(A(K_{g+1}))$. Therefore, in order to prove that $\sigma(M(\Gamma))\in \Per/\GL_g(\Z)$, it is enough to prove that $\sigma(K_{g+1})\in \Per/\GL_g(\Z)$.
As observed in Example \ref{small-dim}, $\sigma(K_{g+1})$ is the equivalence class of the principal cone $\prin$ (see \eqref{E:prin}), which is well known to belong to $\Per$: indeed, it can be proven (see \cite[Sec. 8.10]{NamT} or \cite[Sec. 4.2]{Mar}) that
$$\prin=\sigma[Q_0] \: \text{ for } \: Q_0=
\left(\begin{matrix}
1 & 1/2 & \dotsm & 1/2\\
1/2 & \ddots  & \ddots & \vdots  \\
\vdots & \ddots & \ddots& 1/2 \\
1/2 & \dotsm & 1/2 & 1
\end{matrix}\right).$$

\un{Cographic matroids}: The fact that $\sigma(M^*(\Gamma))\in \Per/\GL_g(\Z)$ for any $3$-edge connected graph
$\Gamma$ of genus $g(\Gamma)\leq g$ was proved by Alexeev-Brunyate (see \cite[Thm. 5.6]{AB}).

\un{$R_{10}$}: Consider the simple totally unimodular matrix $A_{10}$ of rank $5$ from Definition \ref{D:R_10}
and its associated cone $\sigma(A_{10})\in \Mat\subseteq \Voro$. We have to prove that $\sigma(A_{10})\in \Per$.
Indeed, we will prove that $\sigma(A_{10})$ is a face of a top dimensional cone of $\Per$.

To this aim, consider the lattice $\bbD_5$ which, following the notations of \cite[Sec. 4.3]{Mar}, is defined to be the subgroup of $\Z^5$
consisting of all vectors $v=(v_1,\ldots, v_5)\in \Z^5$ such that $\sum_i v_i$ is even together with the restriction of the standard Euclidan quadratic form on $\R^5$.
If we denote by $\{\epsilon_1,\ldots,\epsilon_5\}$ the standard basis of $\Z^5$, then a basis for $\bbD_5$ is given by the vectors
$$e_i:=\epsilon_i+\epsilon_{i+1} \: \: \text{Êfor } i=1,\ldots, 5$$
where we have used the cyclic notation $\epsilon_{i+5}:=\epsilon_i$ for any $i\in \Z$.
With respect to the above basis, the positive definite quadratic form defining $\bbD_5$ is given by the matrix
$$Q_5=\left(\begin{matrix}
2 & 1 & 0 & 0 &  1\\
1 & 2 & 1 & 0 & 0   \\
0 & 1 & 2 & 1 & 0 \\
0 & 0 & 1 & 2 & 1 \\
1 & 0 & 0 & 1 & 2
\end{matrix}\right).$$
The quadratic form $Q_5$ is perfect (see \cite[Cor. 6.4.3]{Mar}) and the set $M(Q_5)$ of minimal integral non-zero vectors for $Q_5$ is given by the
$20$ vectors (see \cite[Sec. 4.3]{Mar})
\begin{equation}\label{E:min-vectors}
\begin{sis}
& e_i, \\
& f_i:=e_i-e_{i+1}, \\
& g_i:=e_i-e_{i+1}+e_{i+2}, \\
& h_i:=e_i-e_{i+1}+e_{i+2}-e_{i+3},
\end{sis}
\end{equation}
where $i=1,\ldots,5$ and we have used the cyclic notation $e_{i+5}:=e_i$ for any $i\in \Z$ (and similarly for $f_i$, $g_i$ and $h_i$).
Therefore, the cone $\sigma[Q_5]\in \Per$ has maximal dimension $15$ and it has $20$ extremal rays given by the rank one quadratic forms $\{e_i\cdot e_i^t, f_i\cdot f_i^t,
g_i\cdot g_i^t, h_i\cdot h_i^t \}_{i=1,\ldots, 5}$
associated to the above
elements of $M(Q_5)$. We claim that
\begin{equation*}\label{E:face}\tag{*}
\sigma(A_{10})\text{ is a face of } \sigma[Q_5],
\end{equation*}
which clearly would imply that $\sigma(A_{10})\in \Per$, as required.

Note that the columns of the matrix $A_{10}$ are exactly the $10$ vectors $\{e_i, g_i\}_{ i=1,\ldots,5}$; hence the extremal rays of $\sigma(A_{10})$ are generated by the $10$
rank one quadratic forms $\{e_i\cdot e_i^t,g_i\cdot g_i^t\}_{i=1,\cdots, 5}$. Therefore, in order to prove (*), we have to find a linear functional $H$ on the vector space $\R^{15}$
of quadratic forms on $\R^5$ that is a supporting hyperplane for $\sigma(A_{10})$, or in other words which satisfies (for any $i=1,\ldots,5$)
\begin{equation*}\label{E:supp-hyper}\tag{**}
\begin{sis}
& H(e_i\cdot e_i^t)=H(g_i\cdot g_i^t)=0, \\
& H(f_i\cdot f_i^t)<0, \\
& H(h_i\cdot h_i^t)<0.
\end{sis}
\end{equation*}
Consider the linear functional $H$ on $\R^{15}$ defined by
$$H\left(\sum_{1\leq i,j\leq 5}\alpha_{i,j}e_i\cdot e_j^t\right)=\sum_{i=1}^5 \alpha_{i,i+1}+2\sum_{i=1}^5 \alpha_{i,i+2}, $$
where $\alpha_{i,j}=\alpha_{j,i}\in \R$ with the usual cyclic convention 
$\alpha_{i+5,j}=\alpha_{i,j+5}:=\alpha_{i,j}$.
%For any $i,j=1,\ldots, 5$, let $\alpha_{ij}$ be the liner functional on $\R^{15}$ whose values is uniquely determined by
%$$\alpha_{i,j}(e_m\cdot e_n^t)=
%\begin{sis}
%& 1 & \text{ if } (m,n)=(i,j) \text{ or }Ê(j,i),\\
%& 0 & \text{ otherwise. }
%\end{sis}$$
%Consider the linear functional
%$$H:=\sum_{i=1}^5 \alpha_{i, i+1} +2 \sum_{i=1}^5 \alpha_{i,i+2},$$
%with the usual cyclic convention that $\alpha_{i+5,j}=\alpha_{i,j+5}:=\alpha_{i,j}$.
From the definition \eqref{E:min-vectors}, it follows easily that
$$\begin{sis}
& H(e_i\cdot e_i^t)=0, \\
& H(f_i\cdot f_i^t)=-2,\\
& H(g_i\cdot g_i^t)=0,\\
& H(h_i\cdot h_i^t)=-2.\\
\end{sis}$$
This implies that $H$ satisfies (**) and we are done.

\vspace{0,3cm}

In order to conclude the proof, it is enough, in view of Seymour's decomposition theorem (see Fact \ref{F:Seymour}),
to prove that if $M_1$ and $M_2$ are two simple regular matroids such that $\sigma(M_1), \sigma(M_2)\in \Per/\GL_g(\Z)$, then $\sigma(M_1\oplus_k M_2)\in \Per/\GL_g(\Z)$ for $k=1,2,3$  (see Definition \ref{D:sum-matroids}).
 From the definition of $\Per$ (see Subsection \ref{S:Perfect}), it follows that $\sigma(M)\in \Per/\GL_g(\Z)$ if and only if there exists  a simple totally unimodular matrix $A\in M_{g,n}(\Z)$
with column vectors $\{v_1,\ldots, v_n\}$ and a positive definite quadratic form $Q$ such that $M=M[A]$
and for any $\xi\in \Z^g\setminus 0$ it holds that $Q(\xi)\geq 1$ with equality if and only if $\pm \xi= v_i$ for some
$1\leq i\leq n$, or in the terminology of Definition \ref{D:well-suited} below, that $Q$ is well-suited for $A$. Therefore, we conclude using the Lemmas \ref{L:lem1}, \ref{L:lem2} and \ref{L:lem3} below.
\end{proof}

In order to simplify the statements of the Lemmas below, we introduce the following

\begin{defi}\label{D:well-suited}
Let $A\in M_{g,n}(\Z)$ be a simple totally unimodular matrix. We say that a symmetric matrix $Q\in M_{g,g}(\R)$ is   {\bf well-suited} for $A$ if $Q$ is positive definite and for any $\xi\in \Z^g\setminus 0$ it holds that $Q(\xi)\geq 1$ with equality if and only if $\pm \xi$ is equal to one of the column vectors of $A$.
\end{defi}

\begin{lemma}\label{L:lem1}
For $i=1,2$, let $A_i\in M_{g_i,n_i}(\Z)$ be a simple totally unimodular matrix and let $Q_i$ be a positive definite quadratic form of rank $g_i$ which is well-suited for $A_i$.
Then
$$Q=\left(\begin{matrix}
Q_1 & 0 \\
0 & Q_2\\
\end{matrix}\right) \text{ is well-suited for }
A=\left(\begin{matrix}
A_1 & 0 \\
0 & A_2\\
\end{matrix}\right). $$
\end{lemma}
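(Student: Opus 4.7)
The plan is to verify the two requirements of Definition \ref{D:well-suited} for $Q$ and $A$ directly, using the block-diagonal structure. Positive-definiteness of $Q$ is immediate: for any nonzero $\xi=(\xi_1,\xi_2)\in\R^{g_1}\oplus\R^{g_2}=\R^g$ we have $Q(\xi)=Q_1(\xi_1)+Q_2(\xi_2)\geq 0$, and equality forces $Q_i(\xi_i)=0$ for $i=1,2$, hence $\xi_1=\xi_2=0$ by positive-definiteness of each $Q_i$.

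For the minimality condition, I would fix $\xi=(\xi_1,\xi_2)\in\Z^g\setminus 0$ and split into cases according to which of $\xi_1\in\Z^{g_1}$, $\xi_2\in\Z^{g_2}$ vanish. If both $\xi_1$ and $\xi_2$ are nonzero, then by the well-suitedness of each $Q_i$ we get $Q(\xi)=Q_1(\xi_1)+Q_2(\xi_2)\geq 1+1=2>1$, so $\xi$ does not realize the minimum. If exactly one component vanishes, say $\xi_2=0$ and $\xi_1\neq 0$, then $Q(\xi)=Q_1(\xi_1)\geq 1$ with equality if and only if $\pm\xi_1$ is a column of $A_1$; in that case $\pm\xi=(\pm\xi_1,0)$ is precisely one of the columns of the block matrix $A$ coming from the $A_1$-block. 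The symmetric case $\xi_1=0$, $\xi_2\neq 0$ is identical.

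Conversely, every column of $A$ is either of the form $(v_1,0)$ with $v_1$ a column of $A_1$ or $(0,v_2)$ with $v_2$ a column of $A_2$, and for such $\xi$ we have $Q(\xi)=Q_i(v_i)=1$ by well-suitedness. This establishes the ``only if'' together with the ``if'' directions simultaneously, completing the proof.

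I expect no genuine obstacle: the whole argument hinges on the clean observation that when both components are nonzero the value jumps to at least $2$, so the minimum vectors of $Q$ on $\Z^g\setminus 0$ are exactly the union of the minimum vectors of $Q_1$ on $\Z^{g_1}\setminus 0$ (extended by zeros) and those of $Q_2$ on $\Z^{g_2}\setminus 0$ (prepended by zeros), which are by construction the columns of $A_1$ and $A_2$ and hence the columns of $A$.
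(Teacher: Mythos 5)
Your proof is correct and follows essentially the same route as the paper's: both rest on the identity $Q(\xi_1,\xi_2)=Q_1(\xi_1)+Q_2(\xi_2)$ and a case split on which of $\xi_1,\xi_2$ vanish, with the observation that both nonzero forces the value to be at least $2$. Your write-up is just a slightly more explicit version of the argument in the paper.
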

\begin{proof}
It is clear that $Q$ is positive definite.
Take now an element $\eta\in\Z^{g_1+g_2}\setminus 0$ and write it as $\eta=(\xi_1,\xi_2)$ with $\xi_i\in \Z^{g_i}$.
 %and $(\xi_1,\xi_2)\neq (0,0)$.
Clearly $Q(\eta)=Q_1(\xi_1)+Q_2(\xi_2)$. Since at least one among $\xi_1$ and $\xi_2$ is non-zero because $\eta\neq 0$, we have that $Q(\eta)\geq 1$ with equality if and only if $\xi_1=0$ and $\pm \xi_2$ is a column vector of the matrix $A_2$ or viceversa, which is equivalent to say that $\pm \eta$ is a column vector of $A$.
\end{proof}

%\begin{remark}\label{quadComp}
%Let $Q$ be a quadratic form associated to a $g\times g$ symmetric matrix $A=[a_{i,j}]$ and let $y=(y_1,\dots,y_g)$ be a vector in $\R^g$. Then if we consider $Q$ as a function on the variables $y_i, i=1,\dots, g$, the coefficient of $Q(y)$ on $y_iy_j$ is equal to $2a_{i,j}$ if $i\neq j$ and is equal to $a_{i,i}$ if $i=j$.
% $a_{i,i}+2\sum_{j=1,j\neq i}^n a_{j,i}y_j$.
%\end{remark}

\begin{lemma}\label{L:lem2}
Consider two simple totally unimodular matrices of the form
$$A_1=\left(\begin{matrix}
B & 0 \\
b^t & 1\\
\end{matrix}\right),
\hspace{0,5cm}
A_2=\left(\begin{matrix}
c^t & 1 \\
C & 0\\
\end{matrix}\right),$$
where $B\in M_{g_1, n_1}(\Z)$, $C\in M_{g_2, n_2}(\Z)$ and $b, c$ are vectors.
Assume that $\ov{Q_i}$ is well-suited for $A_i$ for $i=1,2$. We can write
$$\ov{Q_1}=\left(\begin{matrix}
Q_1 & r_1 \\
r_1^t & 1\\
\end{matrix}\right) \text{ and }\:
\ov{Q_2}=\left(\begin{matrix}
1 & r_2^t \\
r_2 & Q_2\\
\end{matrix}\right)
$$
where $Q_i\in M_{g_i,g_i}(\R)$ and $r_i$ is a vector of length $g_i$ (for $i=1,2$). Then
$$\ov{Q}:=\left(\begin{matrix}
Q_1 & r_1 & r_1\cdot r_2^t  \\
r_1^t & 1 & r_2^t\\
r_2\cdot r_1^t  & r_2 & Q_2
\end{matrix}\right) \text{ is well-suited for }
A=\left(\begin{matrix}
B & 0 & 0 \\
b^t & c^t & 1\\
0 & C & 0
\end{matrix}\right).
$$
\end{lemma}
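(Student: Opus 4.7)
The plan is to reduce the quadratic form $\ov{Q}(\eta)$ to a sum of three explicit non-negative quantities and then carry out a short case analysis on the support of $\eta$. Writing $\eta = (\xi_1, s, \xi_2) \in \Z^{g_1}\times \Z \times \Z^{g_2}$, the first step is to verify by direct computation the identity
\[ \ov{Q}(\eta) \;=\; P_1(\xi_1) + P_2(\xi_2) + (s + r_1^t\xi_1 + r_2^t\xi_2)^2, \]
where $P_i(\xi_i) := Q_i(\xi_i) - (r_i^t\xi_i)^2$; the mixed block $r_1 r_2^t$ of $\ov{Q}$ is precisely what produces the cross-term $2(r_1^t\xi_1)(r_2^t\xi_2)$ needed to complete the square in the middle variable $s$. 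Since $P_i(\xi_i) = \ov{Q_i}(\xi_i, t) - (t + r_i^t\xi_i)^2$ for every $t \in \R$ and $\ov{Q_i}$ is positive definite, one gets $P_i(\xi_i) \geq 0$ with equality iff $\xi_i = 0$; this immediately shows that $\ov{Q}$ is positive definite.

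The crucial auxiliary estimate is that $P_i(\xi_i) \geq 3/4$ whenever $\xi_i \neq 0$. To see this, pick $t_i \in \Z$ nearest to $-r_i^t\xi_i$, so that $(t_i + r_i^t\xi_i)^2 \leq 1/4$; since $(\xi_i, t_i) \neq 0$ and $\ov{Q_i}$ is well-suited for $A_i$, we have $\ov{Q_i}(\xi_i, t_i) \geq 1$, and consequently $P_i(\xi_i) \geq 3/4$.

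Next I would split into cases according to which of $\xi_1, \xi_2$ vanish. If $\xi_1 = \xi_2 = 0$, the identity collapses to $\ov{Q}(\eta) = s^2 \geq 1$, with equality iff $\pm\eta = (0,1,0)^t$, which is the middle column of $A$. If exactly one, say $\xi_2$, vanishes, the identity collapses to $\ov{Q}(\eta) = \ov{Q_1}(\xi_1, s)$, and the well-suitedness of $\ov{Q_1}$ closes the case: columns $(B_{\cdot k}, b_k)^t$ of $A_1$ correspond to columns $(B_{\cdot k}, b_k, 0)^t$ of $A$, while the last column $(0,1)^t$ of $A_1$ gives back the preceding subcase. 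The remaining subcase $\xi_1 = 0$ is symmetric. Finally, if both $\xi_1$ and $\xi_2$ are nonzero, the auxiliary bound gives $\ov{Q}(\eta) \geq P_1(\xi_1) + P_2(\xi_2) \geq 3/2 > 1$ strictly, which is exactly what one needs, since every column of $A$ has $\xi_1 = 0$ or $\xi_2 = 0$.

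The only somewhat non-obvious step is spotting the completion-of-the-square decomposition displayed above; once it is in hand the rest of the argument is routine bookkeeping. The constant $3/4$ is not tight, but any bound strictly greater than $1/2$ on each $P_i(\xi_i)$ would suffice to conclude the mixed-support case, so there is substantial slack.
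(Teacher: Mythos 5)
Your proof is correct and follows essentially the same route as the paper's: the completion-of-the-square identity $\ov{Q}(\xi_1,s,\xi_2)=P_1(\xi_1)+P_2(\xi_2)+(s+r_1^t\xi_1+r_2^t\xi_2)^2$ is exactly the paper's computation of $\min_x\ov{Q}$ made explicit, and your bound $P_i(\xi_i)\ge 3/4$ via the nearest integer is a slightly streamlined version of the paper's Claim (which uses $[x_{\rm min}]$ and $[x_{\rm min}]+1$). The case analysis on the supports of $\xi_1,\xi_2$, including the identification of the columns of $A$ realizing equality, also matches the paper's argument.
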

\begin{proof}
The fact that $\ov{Q_i}$ (for $i=1,2$) can be written in the required form follows from the fact that $\ov{Q_i}$
takes the value $1$ on the last column of $A_i$ since $\ov{Q_i}$ is well-suited for $A_i$.
%In fact,
%$$\ov{Q_1}(0,0,\dots,0,1)=1\Leftrightarrow \ov{Q_1}_{[g_1+1,g_1+1]}=1 $$
%and
%$$\ov{Q_2}(1,0,\dots,0)=1\Leftrightarrow \ov{Q_2}_{[1,1]}=1 $$
%where, given a matrix $A$, we write $A_{[l,m]}$ for the element of $A$ in row $l$ and column $m$.
% (see Definition \ref{D:well-suited}).

Consider now a vector $(\xi_1,x,\xi_2)$ where
$\xi_i=(\xi_i^1,\dots,\xi_i^{g_i})\in \R^{g_i}$ (for $i=1,2$) and $x\in \R$.
Then, using block matrix multiplication, we have that
\begin{equation*}
%\label{E:values-forms2}
\begin{sis}
 (\xi_1,x)^t\ov{Q_1}(\xi_1,x)&= \xi_1^tQ_1\xi_1+2\xi_1^tr_1x  +x^2,\\
(x,\xi_2)^t\ov{Q_2}(x,\xi_2)&= x^2 +2xr_2^t.\xi_2+\xi_2^tQ_2\xi_2,\\
 (\xi_1,x,\xi_2)^t\ov{Q}(\xi_1,x,\xi_2)& =\xi_1^t Q_1\xi_1+2\xi_1^tr_1x+2\xi_1^tr_1r_2^t\xi_2
 +x^2+2xr_2^t\xi_2+\xi_2^tQ_2\xi_2,
 %Q_2(\xi_2)+2 \xi_1^t M\xi_2+2x\left[ \langle r_1, \xi_1\rangle +\langle r_2, \xi_2\rangle\right]\\
 %&+
%2y\left[ \langle s_1, \xi_1\rangle +\langle s_2, \xi_2\rangle\right]+x^2-xy+y^2,
\end{sis}
\end{equation*}
which we rewrite as
%Then, by remark \ref{quadComp}, the coefficient of $x^2$ on $\ov{Q_1}(\xi_1, x)$ is equal to $\ov{Q_1}_{[g_1+1,g_1+1]}=1$, where, given a matrix $A$, we write $A_{[l,m]}$ for the element of $A$ in row $l$ and column $m$. Similarly, the coefficient of $\xi_1^jx$ on $\ov{Q_1}(\xi_1, x)$ is given by $2\ov{Q_1}_{[j,g_1+1]}=2r_1^j$, where $r_i=(r_i^1,\dots,r_i^{g_1})$, for $i=1,2$, while the coefficient of $\xi_1^j\xi_1^k$ coincides with the coefficient of $Q_1(\xi_1)$ on $\xi_1^j\xi_1^k$ since it is equal to $2\ov{Q_1}_{[j,k]}={Q_1}_{[j,k]}$ if $j\neq k$ and to $\ov{Q_1}_{[j,j]}={Q_1}_{[j,j]}$ if $j=k$.
%Putting all this together we conclude that $\ov{Q_1}(\xi_1,x)=Q_1(\xi_1)+2\sum_{j=1}^{g_1}r_1^j\xi_1^jx +x^2$. In the same way, we get that
%$\ov{Q_2}(x,\xi_2)=Q_2(\xi_2)+2\sum_{j=1}^{g_2}r_2^j\xi_2^jx +x^2$.
%Finally for $\ov{Q}(\xi_1, x)$ the coefficient of $x^2$ is equal to $\ov{Q}_{[g_{1}+1,g_1+1]}=1$, the coefficient of $\xi_1^jx$ is equal to $2\ov{Q}_{[j,g_{1}+1]}=2r_1^j$, the coefficient of $\xi_2^jx$ on is equal to $2\ov{Q}_{[g_1+1+j,g_1+1]}=2r_2^j$, the coefficient of $\xi_1^j\xi_2^k$ is equal to $2\ov{Q}_{[j,g_1+1+k]}=2r_1^jr_2^k$ and the coefficient of $\xi_i^j\xi_i^k$ coincides with the coefficient of $Q_i(\xi_i)$ on $\xi_i^j\xi_i^k$, for $i=1,2$.
%Summing up, we have that
\begin{equation}\label{E:values-forms}
\begin{sis}
& \ov{Q_1}(\xi_1,x)=Q_1(\xi_1)+2x\langle r_1, \xi_1\rangle +x^2, \\
& \ov{Q_2}(x,\xi_2)=Q_2(\xi_2)+2x\langle r_2, \xi_2\rangle +x^2, \\
& \ov{Q}(\xi_1,x,\xi_2)=Q_1(\xi_1)+Q_2(\xi_2)+ 2\langle r_1, \xi_1\rangle \langle r_2, \xi_2\rangle+
2x\left[ \langle r_1, \xi_1\rangle +\langle r_2, \xi_2\rangle\right]+ x^2,
\end{sis}
\end{equation}
where $\langle \: ,\: \rangle$ denotes the usual scalar product of vectors.

For a fixed value $\xi_i\in \R^{g_i}$, the minimum of $\ov{Q_i}$ considered as a function on $x$ is attained at
$-\langle \xi_i,r_i\rangle$ and it is equal to $Q_i(\xi_i)-\langle \xi_i,r_i\rangle^2$. Indeed, for any quadratic real function $f$ of the form $f(x)=x^2+2bx+c$, for real numbers $a$ and $b$, the minimum of $f$ is attained when $x=-b$ and it is equal to $c-b^2$. Therefore, since $\ov{Q_i}$ is assumed to be positive definite, we get that (for $i=1,2$)
\begin{equation}\label{E:min-Q_i}
Q_i(\xi_i)-\langle \xi_i, r_i\rangle^2\geq 0 \text{ with equality if and only if } \xi_i=0.
\end{equation}
Similarly, for fixed values $(\xi_1,\xi_2)\in \R^{g_1+g_2}$, the minimum of $\ov{Q}$ considered as a function on $x$
is attained at $x_0=-\langle \xi_1,r_1\rangle-\langle \xi_2,r_2\rangle$ and it is equal to
\begin{equation}\label{E:min-Q}
\min_{x\in \R} \ov{Q}(\xi_1,x,\xi_2)= \ov{Q}(\xi_1,x_0,\xi_2)=Q_1(\xi_1)-\langle \xi_1, r_1\rangle^2+
Q_2(\xi_2)-\langle \xi_2, r_2\rangle^2.
\end{equation}
Using \eqref{E:min-Q_i}, we get that $\min_{x\in \R} \ov{Q}(\xi_1,x,\xi_2)\geq 0$ with equality if and only if
$(\xi_1,\xi_2)=(0,0)$, which proves that $\ov{Q}$ is positive definite. It remains to show that $\ov Q$ is well-suited for $A$, i.e., that for any $(\xi_1,x,\xi_2)\in\Z^{g_1+g_2+1}$, $\ov Q(\xi_1,x,\xi_2)\geq 1$ with equality if and only if $\pm (\xi_1,x,\xi_2)$ is equal to a column vector of $A$.

Fix $(\xi_1,x,\xi_2)\in \Z^{g_1+g_2+1}$. Start by noticing that, since no column vectors of $B$ and of $C$ can be equal to $0$ (see Remark \ref{R:sum}), the vector  $\pm (\xi_1,x,\xi_2)$ is a column vector of $A$ if and only if $\xi_1=0$ and $\pm (x,\xi_2)$ is a column vector of $A_2$ or if $\xi_2=0$ and $\pm (\xi_1,x)$ is a column vector of $A_1$. If $\xi_1=0$ then $\ov{Q}(0, x,\xi_2)=\ov{Q_2}(x,\xi_2)$  by \eqref{E:values-forms}. Now, since $\ov{Q_2}$ is well-suited for $A_2$, we get that
$\ov{Q}(0,x,\xi_2)=\ov{Q_2}(x,\xi_2)\geq 1$ for any $(x,\xi_2)\in \Z^{g_2+1}\setminus 0$ with equality if and only if $\pm (x,\xi_2)$ is a column vector of $A_2$, or equivalently, if and only if $\pm (0,x,\xi_2)$ is a column vector of $A$. We get the same conclusions if $\xi_2=0$.

Therefore, it remains to show that if $\xi_i\in \Z^{g_i}\setminus 0$ for $i=1,2$
 and $x\in \Z$ then
$\ov{Q}(\xi_1,x,\xi_2)>1$. Using \eqref{E:min-Q}, this is a consequence of the following

\un{CLAIM}: If $\xi_i\in \Z^{g_i}\setminus 0$ then $Q_i(\xi_i)-\langle \xi_i, r_i\rangle^2\geq 3/4$ for $i=1,2$.

Let us prove the Claim for $i=1$ (the case $i=2$ being analogous). As observed before, we have that
\begin{equation}\label{E:equa1}
Q_1(\xi_1)-\langle \xi_1, r_1\rangle^2=\min_{x\in \R} \ov{Q_1}(\xi_1,x)=\ov{Q_1}(\xi_1,-\langle \xi_1,r_1\rangle).
\end{equation}
%is the minimum value of the real function $x\mapsto \ov{Q_1}(\xi_1,x)$, which is attained at
%$x_{\rm mim}:=-\langle \xi_1,r_1\rangle$.
Let $x_{\rm min}=-\langle \xi_1,r_1\rangle$ and denote by $M=[x_{\rm min}]\in \Z$ its integer part.  Then we have that
\begin{equation}\label{E:equa2}
\ov{Q_1}(\xi_1,M), \ov{Q_1}(\xi_1,M+1)\geq 1,
\end{equation}
by our original assumptions on $\ov{Q_1}$ and the fact that $\xi_1\in \Z^{g_1}\setminus 0$. Using
\eqref{E:values-forms} we compute
\begin{equation}\label{E:equa3}
\begin{sis}
& \ov{Q_1}(\xi_1,M)-\ov{Q_1}(\xi_1,x_{\rm min})=2(M-x_{\rm min})(-x_{\rm min})+M^2-x_{\rm min}^2=(M-x_{\rm min})^2, \\
& \ov{Q_1}(\xi_1,M+1)-\ov{Q_1}(\xi_1,x_{\rm min})=2(M+1-x_{\rm min})(-x_{\rm min})+(M+1)^2-x_{\rm min}^2=\\&\hspace{5cm}=(M+1-x_{\rm min})^2. \\
\end{sis}
\end{equation}
Equation \eqref{E:equa2} together with \eqref{E:equa3} gives that
\begin{equation}\label{E:equa4}
\begin{sis}
& \ov{Q_1}(\xi_1,x_{\rm min})\geq 1-(M-x_{\rm min})^2, \\
& \ov{Q_1}(\xi_1,x_{\rm min})\geq 1-(M+1-x_{\rm min})^2. \\
\end{sis}
\end{equation}
Putting together \eqref{E:equa1} and \eqref{E:equa4}, we deduce that
$$Q_1(\xi_1)-\langle \xi_1, r_1\rangle^2= \ov{Q_1}(\xi_1,x_{\rm min})\geq \max\{1-(M-x_{\rm min})^2,
1-(M+1-x_{\rm min})^2 \}=$$
$$=1- \min\{M-x_{\rm min},M+1-x_{\rm min} \}^2\geq 1-\left(\frac{1}{2}\right)^2=\frac{3}{4}.$$
%1- \min\{[x_{\rm min}]-x_{\rm min}, [x_{\rm min}]+1-x_{\rm min} \}^2$$

\end{proof}

\begin{lemma}\label{L:lem3}
Consider two simple totally unimodular matrices of the form
$$A_1=A_1=\left(\begin{matrix}
B & 0 & 0 & 0 \\
b_1^t & 1 & 0& 1\\
b_2^t & 0 & 1 & 1
\end{matrix}\right),
\hspace{0,5cm}
A_2=\left(\begin{matrix}
c_1^t & 1 & 0 & 1\\
c_2^t& 0 & 1 & 1\\
C & 0 & 0 & 0 \\
\end{matrix}\right),$$
where $B\in M_{g_1, n_1}(\Z)$, $C\in M_{g_2, n_2}(\Z)$ and $b_1, b_2, c_1, c_2$ are vectors.
Assume that $\ov{Q_i}$ is well-suited for $A_i$ for $i=1,2$.
We can write
$$\ov{Q_1}=\left(\begin{matrix}
Q_1 & r_1 & s_1\\
r_1^t & 1 & -1/2\\
s_1^t & -1/2 & 1
\end{matrix}\right) \text{ and } \:
\ov{Q_2}=\left(\begin{matrix}
 1 & -1/2 & r_2^t\\
-1/2 & 1 & s_2^t \\
r_2 & s_2 & Q_2\\
\end{matrix}\right)
$$
where $Q_i\in M_{g_i,g_i}(\R)$ and $r_i, s_i$ are vectors of length $g_i$ (for $i=1,2$). Then
$$\ov{Q}:=\left(\begin{matrix}
Q_1 & r_1 & s_1 & M  \\
r_1^t & 1 & -1/2 & r_2^t\\
s_1^t & -1/2 & 1 & s_2^t\\
M^t & r_2 & s_2 & Q_2
\end{matrix}\right) \text{ is well-suited for }
A=\left(\begin{matrix}
B & 0 & 0 & 0 & 0\\
b_1^t & c_1^t & 1 & 0 & 1\\
b_2^t & c_2^t & 0 & 1 & 1\\
0 & C & 0 & 0 & 0
\end{matrix}\right),
$$
where $\displaystyle M:=\frac{4r_1r_2^t+ 4s_1 s_2^t+ 2r_1 s_2^t+ 2s_1 r_2^t}{3}\in M_{g_1,g_2}(\R)$.
\end{lemma}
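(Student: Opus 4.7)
The proof strategy parallels that of Lemma \ref{L:lem2}, but with the middle variables now living in a $2$-dimensional space governed by the hexagonal form $f(u,v):=u^2+v^2-uv$, whose Gram matrix $\left(\begin{smallmatrix}1 & -1/2 \\ -1/2 & 1\end{smallmatrix}\right)$ is the central $2\times 2$ block of $\ov{Q_1}$, $\ov{Q_2}$, and $\ov{Q}$.

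First, I would expand
$$\ov{Q}(\xi_1,x,y,\xi_2) = Q_1(\xi_1)+Q_2(\xi_2)+x^2+y^2-xy+2x\alpha+2y\beta+2\xi_1^t M\xi_2,$$
where $a_i:=\langle r_i,\xi_i\rangle$, $b_i:=\langle s_i,\xi_i\rangle$, $\alpha:=a_1+a_2$, $\beta:=b_1+b_2$. Minimizing over real $(x,y)$ at $(x_0,y_0)=(-\tfrac{4\alpha+2\beta}{3},-\tfrac{2\alpha+4\beta}{3})$ gives the value $Q_1(\xi_1)+Q_2(\xi_2)+2\xi_1^t M\xi_2-\tfrac{4}{3}(\alpha^2+\alpha\beta+\beta^2)$. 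The piece $\tfrac{4}{3}(\alpha^2+\alpha\beta+\beta^2)$ splits into $\tfrac{4}{3}(a_1^2+a_1b_1+b_1^2)$, $\tfrac{4}{3}(a_2^2+a_2b_2+b_2^2)$, and a ``cross'' piece $\tfrac{4}{3}(2a_1a_2+a_1b_2+a_2b_1+2b_1b_2)$. A direct calculation shows that $M$ is designed exactly so that $2\xi_1^t M\xi_2$ equals this cross piece, which therefore cancels. This yields the crucial identity
$$\min_{(x,y)\in\R^2}\ov{Q}(\xi_1,x,y,\xi_2) = \min_{(x,y)\in\R^2}\ov{Q_1}(\xi_1,x,y) + \min_{(x,y)\in\R^2}\ov{Q_2}(x,y,\xi_2).$$
Positive definiteness of $\ov{Q}$ follows at once: each summand is nonnegative and vanishes iff the corresponding $\xi_i=0$, while in the case $\xi_1=\xi_2=0$ the remaining $(x,y)$-dependence is the positive definite form $f$.

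For well-suitedness I would split on whether $\xi_1,\xi_2$ vanish. If $\xi_1=0$, then $\ov{Q}(0,x,y,\xi_2)=\ov{Q_2}(x,y,\xi_2)$, so the well-suitedness of $\ov{Q_2}$ for $A_2$ gives the conclusion; since $B$ has no zero columns (Remark \ref{R:sum}), the columns of $A$ with vanishing first block are precisely the columns of $A_2$ padded by a leading zero block. The case $\xi_2=0$ is symmetric, and the sub-case $\xi_1=\xi_2=0$ reduces to the three ``middle'' columns $(0,1,0,0)^t,(0,0,1,0)^t,(0,1,1,0)^t$, where $f$ attains its minimum value $1$ on $\Z^2\setminus 0$.

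The main obstacle is the case $\xi_1,\xi_2\in\Z^{g_i}\setminus 0$, where I must show $\ov{Q}(\xi_1,x,y,\xi_2)>1$ for every $(x,y)\in\Z^2$. Here I would invoke the elementary lattice fact that for every $(p,q)\in\R^2$ some $(x,y)\in\Z^2$ satisfies $f(x-p,y-q)\leq 1/3$ --- equivalently, the Voronoi cell of $0\in\Z^2$ under $f$ is a regular hexagon whose vertices satisfy $f=1/3$. Writing
$$\ov{Q_i}(\xi_i,x,y) = \left[Q_i(\xi_i)-\tfrac{4}{3}(a_i^2+a_ib_i+b_i^2)\right]+f\!\left(x-x_0^{(i)},y-y_0^{(i)}\right)$$
and choosing integer $(x,y)$ realizing $f(x-x_0^{(i)},y-y_0^{(i)})\leq 1/3$, the well-suitedness of $\ov{Q_i}$ forces $Q_i(\xi_i)-\tfrac{4}{3}(a_i^2+a_ib_i+b_i^2)\geq 1-\tfrac{1}{3}=\tfrac{2}{3}$ for every $\xi_i\neq 0$. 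Summing the two such inequalities via the displayed identity gives $\min_{(x,y)\in\R^2}\ov{Q}(\xi_1,x,y,\xi_2)\geq 4/3>1$, which holds a fortiori on $\Z^2$, completing the proof.
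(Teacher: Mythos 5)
Your proof is correct and follows the same architecture as the paper's: the same expansion of $\ov{Q}(\xi_1,x,y,\xi_2)$, the same minimization over $(x,y)$ showing that $M$ is designed precisely so that the cross terms cancel and $\min_{x,y}\ov{Q}=\min_{x,y}\ov{Q_1}+\min_{x,y}\ov{Q_2}$, the same case split on the vanishing of $\xi_1,\xi_2$, and the same reduction of the remaining case to a uniform lower bound on each $\min_{x,y}\ov{Q_i}(\xi_i,x,y)$ for $\xi_i\neq 0$. The one genuine difference is that final quantitative step. The paper evaluates $\ov{Q_1}$ at the four integer points $(M_1+\epsilon,M_2+\delta)$, $\epsilon,\delta\in\{0,1\}$, surrounding the real minimizer and asserts that the minimum of $f(x-x^1_{\rm min},y-y^1_{\rm min})$ over these four points is at most $1/4$ (with worst case claimed at half-integer offsets), yielding $3/4$ per factor and $3/2>1$ in total; you instead invoke the covering radius of the hexagonal lattice, getting $1/3$, hence $2/3$ per factor and $4/3>1$. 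Your constant is in fact the correct sharp one and the paper's is not: at the deep hole, i.e.\ offset $(2/3,1/3)$ from a lattice point, every lattice point (in particular each of the four corners) satisfies $f\geq 1/3$, and already at offset $(0.4,0.6)$ the four-corner minimum equals $0.28>1/4$. Since $1-\tfrac13=\tfrac23$ and $\tfrac23+\tfrac23=\tfrac43>1$, the lemma is unaffected, but your covering-radius argument quietly repairs this slip in the published estimate while changing nothing else in the proof.
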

\begin{proof}
The fact that $\ov{Q_i}$ (for $i=1,2$) can be written in the required form follows from the fact that $\ov{Q_i}$
takes value $1$ on the last three columns of $A_i$ since $\ov{Q_i}$ is well-suited for $A_i$.

%In fact,
%\begin{tabular}{lcl}
%$\begin{sis}
%\ov{Q_1}(0,\dots,0,1,0)=1\\
%\ov{Q_1}(0,\dots,0,0,1)=1\\
%\ov{Q_1}(0,\dots,0,1,1)=1
%\end{sis}$
%\end{equation*}
%&
%$\Leftrightarrow$
%&
%\begin{equation*}
%$\begin{sis}
%\ov{Q_1}_{[g_1+1,g_1+1]}=1&\\
%\ov{Q_1}_{[g_1+2,g_1+2]}=1& \\
%\ov{Q_1}_{[g_1+1,g_1+1]}+2&\ov{Q_1}_{[g_1+1,g_1+2]}+\ov{Q_1}_{[g_1+2,g_1+2]}=1
%\end{sis}$
%\end{tabular}
%\end{equation*}
%\begin{equation*}
%\Leftrightarrow
%\begin{sis}
%\ov{Q_1}_{[g_1+1,g_1+1]}=1&\\
%\ov{Q_1}_{[g_1+2,g_1+2]}=1&\\
%\ov{Q_1}_{[g_1+1,g_1+2]}=-&\frac 12,
%\end{sis}
%\end{equation*}
% (see Definition \ref{D:well-suited}).[]
%where, given a matrix $A$, we write $A_{[l,m]}$ for the element of $A$ in row $l$ and column $m$. Similarly we have that
%\begin{tabular}{lcl}
%$\begin{sis}
%\ov{Q_2}(1,0,\dots,0)=1\\
%\ov{Q_2}(0,1,0,\dots,0)=1\\
%\ov{Q_1}(1,1,0,\dots,0)=1
%\end{sis}$
%\end{equation*}
%&
%$\Leftrightarrow$
%&
%\begin{equation*}
%$\begin{sis}
%\ov{Q_2}_{[1,1]}=1&\\
%\ov{Q_2}_{[2,2]}=1& \\
%\ov{Q_2}_{[1,2]}=-\frac 12.&
%\end{sis}$
%\end{tabular}

Consider a vector $(\xi_1,x,y,\xi_2)$ where
$\xi_i\in \R^{g_i}$ (for $i=1,2$) and $x, y\in \R$.
Then, using block matrix multiplication, we have that
\begin{equation*}
%\label{E:values-forms2}
\begin{sis}
 (\xi_1,x,y)^t\ov{Q_1}(\xi_1,x, y)&= \xi_1^tQ_1\xi_1+2\xi_1^tr_1x +2\xi_1^ts_1y +x^2 +2x\left(-\frac12\right)y+y^2, \\
(x,y,\xi_2)^t\ov{Q_2}(x,y,\xi_2)&= x^2 +2x\left(-\frac12\right)y+2xr_2^t\xi_2+y^2+2ys_2^t\xi_2+\xi_2^tQ_2\xi_2,\\
 (\xi_1,x,y,\xi_2)^t\ov{Q}(\xi_1,x,y,\xi_2)& =\xi_1^t Q_1\xi_1+2\xi_1^tr_1x+2\xi_1^ts_1y+2\xi_1^tM\xi_2\\
 &+x^2+2x\left(-\frac 12\right)y+2xr_2^t\xi_2+y^2+2ys_2^t\xi_2+\xi_2^tQ_2\xi_2,
 %Q_2(\xi_2)+2 \xi_1^t M\xi_2+2x\left[ \langle r_1, \xi_1\rangle +\langle r_2, \xi_2\rangle\right]\\
 %&+
%2y\left[ \langle s_1, \xi_1\rangle +\langle s_2, \xi_2\rangle\right]+x^2-xy+y^2,
\end{sis}
\end{equation*}
which we rewrite as
\begin{equation}\label{E:values-forms2}
\begin{sis}
 \ov{Q_1}(\xi_1,x, y)&= Q_1(\xi_1)+2x\langle r_1, \xi_1\rangle +2y\langle s_1,\xi_1\rangle +x^2 -xy+y^2, \\
 \ov{Q_2}(x, y,\xi_2)& = Q_2(\xi_2)+2x\langle r_2, \xi_2\rangle+2y\langle s_2,\xi_2\rangle +x^2 -xy+y^2 , \\
 \ov{Q}(\xi_1,x,y,\xi_2)& = Q_1(\xi_1)+Q_2(\xi_2)+2 \xi_1^t M\xi_2+2x\left[ \langle r_1, \xi_1\rangle +\langle r_2, \xi_2\rangle\right]\\
 &+
2y\left[ \langle s_1, \xi_1\rangle +\langle s_2, \xi_2\rangle\right]+x^2-xy+y^2,
\end{sis}
\end{equation}
where $\langle , \rangle$ denotes the usual scalar product of vectors.
Let $f:\R^2\to \R$ be a quadratic function of the form $f(x,y)=x^2-xy+y^2+2ax+aby+c$, where $a,b$ and $c$ are real numbers.
Then an easy calculation shows that the minimum value of $f$ is attained  when
\begin{equation}\label{E:gen-min-values}
\begin{sis}
x=-\frac 43 a-\frac 23b \\
y=-\frac 23a-\frac 43b
\end{sis}
\end{equation}
and it is equal to $-\frac 43(a^2+b^2+ab)+c$.
So, by \eqref{E:gen-min-values}, for a fixed value $\xi_i\in \R^{g_i}$, the minimum of $\ov{Q_i}$ considered as a function on $x$ and $y$ is attained at
\begin{equation}\label{E:min-values}
\begin{sis}
& x^i_{\rm min}=-\frac{4}{3}\langle r_i,\xi_i\rangle -\frac{2}{3}\langle s_i,\xi_i\rangle,\\
& y^i_{\rm min}=-\frac{2}{3}\langle r_i,\xi_i\rangle -\frac{4}{3}\langle s_i,\xi_i\rangle,\\
\end{sis}
\end{equation}
and it is equal to $\displaystyle Q_i(\xi_i)-\frac{4}{3}\left[\langle \xi_i,r_i\rangle^2+ \langle \xi_i,s_i\rangle^2+\langle \xi_i,r_i\rangle \langle \xi_i,s_i\rangle\right]$. Therefore, since $\ov{Q_i}$ is assumed to be positive definite, we get that (for $i=1,2$)
\begin{equation}\label{E:min-Q_i2}
 Q_i(\xi_i)-\frac{4}{3}\left[\langle \xi_i,r_i\rangle^2+ \langle \xi_i,s_i\rangle^2+\langle \xi_i,r_i\rangle \langle \xi_i,s_i\rangle\right]\geq 0
 \text{ with equality if and only if } \xi_i=0.
\end{equation}
Similarly, for fixed values $(\xi_1,\xi_2)\in \R^{g_1+g_2}$, the minimum of $\ov{Q}$ considered as a function on $x$ and $y$  is attained at
$$\begin{sis}
& x_0=-\frac{4}{3}\left[\langle r_1,\xi_1\rangle+\langle r_2,\xi_2\rangle \right]-
\frac{2}{3}\left[\langle s_1,\xi_1\rangle + \langle s_2,\xi_2\rangle \right],\\
& y_0=-\frac{2}{3}\left[\langle r_1,\xi_1\rangle+\langle r_2,\xi_2\rangle \right]-
\frac{4}{3}\left[\langle s_1,\xi_1\rangle + \langle s_2,\xi_2\rangle \right],\\
\end{sis}$$
and it is equal to

\begin{equation}
\begin{aligned}
&\ov{Q}(\xi_1,x_0,y_0,\xi_2)=-\frac 43\left[(\langle r_1,\xi_1\rangle+\langle r_2,\xi_2\rangle)^2+(\langle s_1,\xi_1\rangle+\langle s_2,\xi_2\rangle)^2+\right.\\
&\left.(\langle r_1,\xi_1\rangle+\langle r_2,\xi_2\rangle)(\langle s_1,\xi_1\rangle+\langle s_2,\xi_2\rangle)\right]+2\xi_1^tM\xi_2+Q_i(\xi_1)+Q_2(\xi_2)\\
&=\sum_{i=1}^2\left\{ Q_i(\xi_i)-\frac{4}{3}\left[\langle \xi_i,r_i\rangle^2+B+2\xi_1^tM\xi_2\langle \xi_i,s_i\rangle^2+\langle \xi_i,r_i\rangle \langle \xi_i,s_i\rangle\right]\right\},
\end{aligned}
\end{equation}
where $B:=2\langle r_1,\xi_1\rangle\langle r_2,\xi_2\rangle +2\langle s_1,\xi_1\rangle\langle s_2,\xi_2\rangle +\langle r_1,\xi_1\rangle\langle s_2,\xi_2\rangle +\langle r_2,\xi_2\rangle\langle s_1,\xi_1\rangle$. We claim that $B+2\xi_1^tM\xi_2=0$. Given a vector $v$, denote by $v^j$ the $j$-th entry of $v$. Then our claim follows from the easy observation that the coefficient of $\xi_1^j\xi_2^k$ in the expression $B$ is equal to
$$-\frac 43\left(2r_1r_2^k+2s_1s_2^k+r_1^js_2^k+r_2^ks_1^j\right),$$
and thus is opposite to the coefficient of $\xi_1^j\xi_2^k$ in $2\xi_1^tM\xi_2$, which turns out to be
$$2\left(\frac 43r_1r_2^k+\frac 43s_1s_2^k+\frac 23r_1^js_2^k+\frac 23s_1^jr_2^k\right) .$$
In conclusion, we have that
\begin{equation}\label{E:min-Q2}
\begin{aligned}
& \min_{x,y\in \R} \ov{Q}(\xi_1,x,y,\xi_2)= \ov{Q}(\xi_1,x_0,y_0,\xi_2)= \sum_{i=1}^2\left\{ Q_i(\xi_i)-\frac{4}{3}\left[\langle \xi_i,r_i\rangle^2+ \right.\right.\\
&\left.\left.\langle \xi_i,s_i\rangle^2+\langle \xi_i,r_i\rangle \langle \xi_i,s_i\rangle\right]\right\}=
\min_{x,y\in \R} \ov{Q_1}(\xi_1,x,y)+ \min_{x,y\in \R} \ov{Q_2}(x,y,\xi_2).
\end{aligned}
\end{equation}
Using \eqref{E:min-Q_i2}, we get that $\displaystyle \min_{x,y\in \R} \ov{Q}(\xi_1,x,y,\xi_2)\geq 0$ with equality if and only if
$(\xi_1,\xi_2)=(0,0)$, which proves that $\ov{Q}$ is positive definite. It remains to show that $\ov Q$ is well-suited for $A$, i.e., that for any $(\xi_1,x,y,\xi_2)\in\Z^{g_1+g_2+2}$, $\ov Q(\xi_1,x,y,\xi_2)\geq 1$ with equality if and only if $\pm (\xi_1,x,y,\xi_2)$ is equal to a column vector of $A$.

Fix $(\xi_1,x,y,\xi_2)\in \Z^{g_1+g_2+2}$.
Using the same type of argumentation as in the proof of Lemma \ref{L:lem2}, we start by noticing that, since no column vectors of $B$ and of $C$ can be equal to $0$ (see Remark \ref{R:sum}), the vector  $\pm (\xi_1,x,y,\xi_2)$ is a column vector of $A$ if and only if $\xi_1=0$ and $\pm (x,y,\xi_2)$ is a column vector of $A_2$ or if $\xi_2=0$ and $\pm (\xi_1,x,y)$ is a column vector of $A_1$.
 %If $\xi_1=0$ then $\ov{Q}(0, x,y,\xi_2)=\ov{Q_2}(x,y,\xi_2)$  by \eqref{E:values-forms2}. Now, since $\ov{Q_2}$ is well-suited for $A_2$, we get that $\ov{Q}(0,x,\xi_2)=\ov{Q_2}(x,\xi_2)\geq 1$ for any $(x,\xi_2)\in \Z^{g_2+1}\setminus 0$ with equality if and only if $(x,\xi_2)$ is a column vector of $A_2$, or equivalently, if and only if $(0,x,\xi_2)$ is a column vector of $A$. We get the same conclusions if $\xi_2=0$.
If $\xi_1=0$ then $\ov{Q}(0, x,y,\xi_2)=\ov{Q_2}(x,y,\xi_2)$  by \eqref{E:values-forms2}. Now, since $\ov{Q_2}$ is well-suited for $A_2$, we get that
$\ov{Q}(0,x,y,\xi_2)=\ov{Q_2}(x,y,\xi_2)\geq 1$ for any $(x,y,\xi_2)\in \Z^{g_2+2}\setminus 0$ with equality if and only if $\pm (x,y,\xi_2)$ is a column vector of $A_2$ or, equivalently, if and only if $\pm (0,x,y,\xi_2)$ is a column vector of $A$. We get the same conclusions if $\xi_2=0$.

Therefore, it remains to show that if $\xi_i\in \Z^{g_i}\setminus 0$ for $i=1,2$ and $x,y\in \Z$ then
$\ov{Q}(\xi_1,x,y,\xi_2)>1$. Using \eqref{E:min-Q2}, this is a consequence of the following
%implied by the following (for $i=1,2$)

\un{CLAIM}: If $\xi_i\in \Z^{g_i}\setminus 0$ then $Q_i(\xi_i)-\frac{4}{3}\left[\langle \xi_i,r_i\rangle^2+ \langle \xi_i,s_i\rangle^2+\langle \xi_i,r_i\rangle \langle \xi_i,s_i\rangle\right]\geq \frac{3}{4}$, for $i=1,2$.

Let us prove the Claim for $i=1$ (the case $i=2$ being analogous). As observed before, we have that
\begin{equation}\label{E:equa1bis}
Q_1(\xi_1)-\frac{4}{3}\left[\langle \xi_1,r_1\rangle^2+ \langle \xi_1,s_1\rangle^2+\langle \xi_1,r_1\rangle \langle \xi_1,s_1\rangle\right]=\ov{Q_1}(\xi_1,x_{\rm min}^1, y_{\rm min}^1),
\end{equation}
where $x_{\rm min}^1$ and $y_{\rm min}^1$ are given in \eqref{E:min-values}. Let $M_1=[x^1_{\rm min}]\in \Z$ and
$M_2=[y^1_{\rm min}]\in \Z$ be their integer parts.  Then we have that
\begin{equation}\label{E:equa2bis}
\ov{Q_1}(\xi_1,M_1,M_2), \ov{Q_1}(\xi_1,M_1+1,M_2), \ov{Q_1}(\xi_1,M_1,M_2+1), \ov{Q_1}(\xi_1,M_1+1,M_2+1)\geq 1,
\end{equation}
by our original assumptions on $\ov{Q_1}$ and the fact that $\xi_1\in \Z^{g_i}\setminus 0$ and $M_1,M_2\in \Z$.
Now, from equation \eqref{E:values-forms2} we have that for any $x,y\in \R$
\begin{equation}\label{E:equa3bis}
\begin{aligned}
\ov{Q_1}(\xi_1,x,y)-\ov{Q_1}(\xi_1,x^1_{\rm min},y^1_{\rm min})=2(x-x^1_{\rm min})\langle r_1,\xi_1 \rangle+(y-y^1_{\rm min})\langle s_1,\xi_1\rangle\\
+x^2-xy+y^2-(x^1_{\rm min})^2+x^1_{\rm min}y^1_{\rm min}-(y^1_{\rm min})^2\\
=(x-x^1_{\rm min})^2-(x-x^1_{\rm min})
(y-y^1_{\rm min})+(y-y^1_{\rm min})^2,
\end{aligned}
\end{equation}
where the last equality follows from the fact that
\begin{equation*}
\begin{sis}
& \langle r_1,\xi_1\rangle=-2x^1_{\rm min}+y^1_{\rm min}\\
& \langle s_1,\xi_1\rangle=x^1_{\rm min}-2y^1_{\rm min}
\end{sis}
\end{equation*}
which we can easily deduce from  \eqref{E:min-values}.
%\begin{equation}\label{E:equa3bis}
%(x-x^1_{\rm min})^2-(x-x^1_{\rm min})
%(y-y^1_{\rm min})+(y-y^1_{\rm min})^2
%\end{equation}
Putting together \eqref{E:equa2bis} and \eqref{E:equa3bis}, we deduce that, if $x$ takes the value of either $M_1$ or $M_1+1$ and if $y$ takes the value of either $M_2$ or $M_2+1$, then
$$\ov{Q_1}(\xi_1,x^1_{\rm min},y^1_{\rm min})\geq 1-\left[(x-x^1_{\rm min})^2-(x-x^1_{\rm min})
(y-y^1_{\rm min})+(y-y^1_{\rm min})^2\right]$$
which implies that
%\ov{Q_1}(\xi_1,x^1_{\rm min},y^1_{\rm min})\geq (M_1+1-x^1_{\rm min})^2-(M_1+1-x^1_{\rm min})
%(M_2+1-y^1_{\rm min})+(M_2+1-y^1_{\rm min})^2
$$\ov{Q_1}(\xi_1,x^1_{\rm min},y^1_{\rm min})\geq
1- \min_{\stackrel{x=M_1,M_1+1}{y=M_2,M_2+1}}\{(x-x^1_{\rm min})^2-(x-x^1_{\rm min})
(y-y^1_{\rm min})+(y-y^1_{\rm min})^2\}.
$$
The minimum appearing in the last equation will be at most equal to $1/4$, which will be the case if
%attain its maximum value if
 $x^1_{\rm min}=M_1+1/2$ and
$y^1_{\rm min}=M_2+1/2$.
%, in which case it will be equal to $1/4$.
Therefore we get that
\begin{equation*}
\ov{Q_1}(\xi_1,x^1_{\rm min},y^1_{\rm min})\geq 1-\frac{1}{4}=\frac{3}{4},
\end{equation*}
which, combined with \eqref{E:equa1bis}, concludes the proof of the Claim.
\end{proof}

%By analogy with the Erdahl-Ryshkov's characterization of the cones of $\Voro$ that belong to $\Mat$ (see Fact %\ref{F:Mat-Voro}\eqref{F:Mat-Voro2}),
%it is natural to ask the following
%the following question seems natural

%\begin{question}
%Find a characterization of the cones $\sigma\in \Per$ that belong to $\Mat$.
%\end{question}

\subsection{$\Mat$ is the intersection of $\Voro$ and $\Per$.}\label{S:inter-Vor-Per}

The aim of this subsection is to prove the following

\begin{prop}\label{P:inter-Mat-Perf}
We have the following
$$\Voro\cap \Per\subseteq \Mat,$$
i.e. if $\sigma$ is a cone of $\Voro$ and of $\Per$ then $\sigma$ is a cone of $\Mat$.
\end{prop}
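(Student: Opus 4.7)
The plan is to combine two earlier facts essentially as a one-line observation. The key point is that being in $\Per$ forces a strong structural condition on the extremal rays, and this condition is exactly what characterizes, among cones of $\Voro$, those that lie in $\Mat$.

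First, I would invoke Remark \ref{R:Per-simpli}(ii): every cone $\tau \in \Per$ has the property that each of its extremal rays is generated by a rank-one positive semi-definite quadratic form (this follows from the very definition of $\sigma[Q]$ as the convex hull of the rank-one forms $\xi \cdot \xi^t$ for $\xi \in M(Q)$, together with the face-to-face property of the perfect cone decomposition). So if $\sigma \in \Voro \cap \Per$, then in particular $\sigma = \sigma_D$ for some Delone subdivision $D$ and all extremal rays of $\sigma_D$ are generated by rank-one quadratic forms.

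Next, I would apply Fact \ref{F:Mat-Voro}\eqref{F:Mat-Voro2}, which is precisely the Erdahl-Ryshkov characterization: a cone $\sigma_D \in \Voro$ belongs to $\Mat$ if and only if its extremal rays are generated by rank-one quadratic forms (equivalence of (a) and (c)). Combining the two observations gives $\sigma \in \Mat$, which is the claim.

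Since all the work is packaged into the two prior facts, there is no real obstacle; the proof amounts to a two-step implication $\sigma \in \Per \Rightarrow \text{rank-one extremal rays} \Rightarrow \sigma \in \Mat$ (the second implication needing the hypothesis $\sigma \in \Voro$). The only point worth being careful about is verifying that Remark \ref{R:Per-simpli}(ii) does apply to \emph{all} cones of $\Per$ and not just to the maximal ones: this is fine because the extremal rays of any face of $\sigma[Q]$ are a subset of the extremal rays of $\sigma[Q]$ itself, hence still rank-one.
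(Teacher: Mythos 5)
Your proof is correct and follows exactly the paper's own argument: membership in $\Per$ forces rank-one extremal rays (Remark \ref{R:Per-simpli}), and the Erdahl--Ryshkov characterization in Fact \ref{F:Mat-Voro}\eqref{F:Mat-Voro2} then places the cone in $\Mat$ given that it lies in $\Voro$. Your closing remark about faces of $\sigma[Q]$ is a sensible sanity check and consistent with the paper's use of the remark for all cones of $\Per$.
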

\begin{proof}
Let $\sigma\in \Voro\cap \Per$.  The fact that $\sigma\in \Per$ implies, by Remark \ref{R:Per-simpli}, that the extremal rays of $\sigma$ are generated by positive semi-definite
quadratic forms of rank one. Fact \ref{F:Mat-Voro}\eqref{F:Mat-Voro2}, together with the hypothesis that $\sigma\in \Voro$, now implies that $\sigma\in \Mat$.
\end{proof}

By combining Fact \ref{F:Mat-Voro}\eqref{F:Mat-Voro1}, Theorem \ref{T:Mat-cont-Perf} and Proposition \ref{P:inter-Mat-Perf}, we deduce the following

\begin{cor}\label{C:main-result}
We have that
$$\Voro\cap \Per= \Mat,$$
i.e. a cone $\sigma$ belongs to $\Voro$ and $\Per$ if and only if it belongs to $\Mat$.
\end{cor}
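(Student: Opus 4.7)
The plan is to establish the equality $\Voro \cap \Per = \Mat$ by a double inclusion, assembling the three ingredients that have already been assembled earlier in the paper.

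For the forward inclusion $\Mat \subseteq \Voro \cap \Per$, I would simply combine Fact \ref{F:Mat-Voro}\eqref{F:Mat-Voro1} of Erdahl-Ryshkov, which identifies each $\sigma(A) \in \Mat$ with the secondary cone $\sigma_{\D_A} \in \Voro$ of the generalized lattice dicing $\D_A$, with Theorem \ref{T:Mat-cont-Perf}, which places every cone of $\Mat$ inside $\Per$. Intersecting these two inclusions yields $\Mat \subseteq \Voro \cap \Per$.

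For the reverse inclusion $\Voro \cap \Per \subseteq \Mat$, I would appeal to Proposition \ref{P:inter-Mat-Perf}: given $\sigma \in \Voro \cap \Per$, membership in $\Per$ forces the extremal rays of $\sigma$ to be generated by rank-one positive semi-definite quadratic forms, by Remark \ref{R:Per-simpli}(ii); the characterization in Fact \ref{F:Mat-Voro}\eqref{F:Mat-Voro2} (equivalence of conditions (a) and (c)) then applies to the cone $\sigma \in \Voro$ and places it inside $\Mat$.

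Since all three ingredients have been established earlier in the excerpt, no real obstacle remains at this final step; the corollary is purely a bookkeeping assembly. The genuine difficulty lay in proving $\Mat \subseteq \Per$ (Theorem \ref{T:Mat-cont-Perf}), which required Seymour's decomposition theorem together with the three lemmas constructing well-suited quadratic forms compatible with $1$-, $2$- and $3$-sums, and, on the other side, the Erdahl-Ryshkov characterization of $\Mat$ inside $\Voro$ as the locus of cones with rank-one extremal rays.
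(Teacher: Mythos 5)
Your proposal is correct and matches the paper's own argument exactly: the corollary is obtained by combining Fact \ref{F:Mat-Voro}\eqref{F:Mat-Voro1}, Theorem \ref{T:Mat-cont-Perf} and Proposition \ref{P:inter-Mat-Perf}, precisely as you assemble them. Nothing further is needed.
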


Combining Corollary \ref{C:main-result} with Example \ref{small-dim}, we deduce the following classical result of
Dickson (\cite[Thm. 2]{Dic}):

\begin{cor}[Dickson]
The principal cone $\prin$ is the unique cone of (maximal) dimension $\binom{g+1}{2}$, up to $\GL_g(\Z)$-equivalence, which is contained in $\Voro$ and $\Per$.
\end{cor}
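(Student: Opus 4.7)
The proof plan is essentially a two-line combination of results that have just been established or cited in the paper. The strategy is to reduce the statement about $\Voro \cap \Per$ to an already-known statement about $\Mat$, and then invoke the classical Korkine--Zolotarev theorem.

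First I would apply Corollary \ref{C:main-result}, which gives the identification $\Voro \cap \Per = \Mat$. Consequently, a cone $\sigma$ of maximal dimension $\binom{g+1}{2}$ lying in both $\Voro$ and $\Per$ is the same thing as a cone of $\Mat$ of maximal dimension $\binom{g+1}{2}$. In particular, the principal cone $\prin$ does lie in $\Voro \cap \Per$, since Example \ref{small-dim}(i) identifies its $\GL_g(\Z)$-equivalence class with $\sigma(M(K_{g+1}))$, which is a cone of $\Mat$ of full dimension $\binom{g+1}{2}$ (the matroid $M(K_{g+1})$ being a simple regular matroid of rank $g$ represented by the totally unimodular matrix $A(K_{g+1})$ of Example \ref{E:complete}).

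Next I would invoke the classical Korkine--Zolotarev result recalled at the beginning of Example \ref{small-dim}, namely that up to $\GL_g(\Z)$-equivalence there is exactly one cone of $\Mat$ of maximum dimension $\binom{g+1}{2}$, which is the principal cone $\prin$. Combined with the previous paragraph, this immediately yields the statement: any maximal-dimensional cone of $\Voro \cap \Per$ is $\GL_g(\Z)$-equivalent to $\prin$.

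There is no real obstacle here: once Corollary \ref{C:main-result} is in hand, the corollary is a direct translation of an old result on the classification of top-dimensional matroidal cones (equivalently, of maximal zonotopal Dirichlet--Voronoi polytopes, via Remark \ref{R:zonotope}) into the language of the perfect and 2nd Voronoi decompositions. The only mild care is to remark that $\prin$ does have full dimension $\binom{g+1}{2}$, which is visible from the description \eqref{E:prin}: the defining inequalities cut out a cone with nonempty interior in $\R^{\binom{g+1}{2}}$.
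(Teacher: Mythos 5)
Your proposal is correct and matches the paper's own argument exactly: the corollary is deduced by combining Corollary \ref{C:main-result} ($\Voro\cap\Per=\Mat$) with the Korkine--Zolotarev classification recalled in Example \ref{small-dim}, which says the principal cone is the unique top-dimensional cone of $\Mat$ up to $\GL_g(\Z)$-equivalence. Nothing further is needed.
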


\section{Toroidal compactifications of $\Ag$}\label{S:toroidal}

\subsection{Preliminaries on $\AgPer$ and $\AgVor$}

From the general theory of toroidal compactifications (see \cite{AMRT} for the general case of bounded symmetric domains and \cite{NamT} for the special case of the Siegel upper half space), it follows that to each
admissible decomposition $\Sigma$ of $\Ort$ (in the sense of Definition \ref{decompo}) it is associated a toroidal compactification $\ov{\Ag}^{\Sigma}$ of the moduli space $\Ag$ of principally polarized abelian varieties of dimension $g$, i.e. a complete normal variety $\ov{\Ag}^{\Sigma}$ containing $\Ag$ as a dense open subset and such that the pair
$(\Ag,\ov{\Ag}^{\Sigma})$ is \'etale locally isomorphic to a torus inside a complete toric variety.
By construction, the toroidal compactification $\ov{\Ag}^{\Sigma}$ comes with a stratification into locally closed
subsets which are naturally in order-reversing bijection (with respect to the order relation given by the closure) with the $\GL_g(\Z)$-equivalence classes of the relative interiors of the cones in $\Sigma$. For example, the origin of $\Ort$ (which is the unique zero dimensional cone in every admissible decomposition $\Sigma$) corresponds to the open subset $\Ag$ (which is the unique stratum of $\ov{\Ag}^{\Sigma}$ of maximal dimension $\binom{g+1}{2}$), while the maximal dimensional cones in $\Sigma$ correspond to the zero dimensional strata of $\ov{\Ag}^{\Sigma}$.

We will be interested in the toroidal compactifications of $\Ag$ associated to the perfect cone decomposition and to
the 2nd Voronoi decomposition, which are called, respectively, the perfect toroidal compactification and the 2nd Voronoi compactification of $\Ag$ and
are denoted by $\AgPer$ and $\AgVor$, respectively. It is known that $\AgPer$ and $\AgVor$ are projective (for $\AgPer$ this follows easily from the construction, see \cite[Chap. 8]{NamT} for details; for $\AgVor$ this is a non-trivial result of Alexeev, see
\cite[Cor. 5.12.8]{alex1}). Note that since $\Per$ has non simplicial cones for $g\geq 4$ (see Remark \ref{R:Per-simpli}) and similarly $\Voro$ has non simplicial cones for $g\geq 5$
(see Remark \ref{R:Voro-nonsimpli}), the compactifications $\AgPer$ and $\AgVor$ do not have finite quotient singularities if, respectively, $g\geq 4$ or $g\geq 5$.

These two toroidal compactifications of $\Ag$ have a special importance due to the following

\begin{fact}\label{F:comp-Tor}
\noindent
\begin{enumerate}[(i)]
\item (Shepherd-Barron \cite{SB}) $\AgPer$ is the canonical model of $\Ag$ for $g\geq 12$.
\item (Alexeev \cite{alex1}) $\AgVor$ is the normalization of the main irreducible component of Alexeev's
    moduli space $\ov{AP_g}$ of stable semiabelic pairs, which provides a \emph{modular} compactification of $\Ag$.
\item (Mumford-Namikawa \cite{nam2}, Alexeev \cite{alex}, Alexeev-Brunyate \cite{AB}) The Torelli map
$$ \tg : \Mg\to \Ag,  $$
sending a curve $X\in \Mg$ into its polarized Jacobian $(\Jac(X), \Theta_X)\in \Ag$, extends to regular maps
\begin{equation}\label{comp-Torelli}
\tgb^V: \Mgb\to \AgVor \hspace{0,3cm} \text{ and } \hspace{0,3cm} \tgb^P: \Mgb\to \AgPer,
\end{equation}
where $\Mgb$ is the Deligne-Mumford (see \cite{DM}) compactification of $\Mg$ via stable curves.
\item (Alexeev-Brunyate \cite{AB}) \label{F:comp-Tor4}
$\AgVor$ and $\AgPer$ contain a common open subset $\Agcogra$ given by the union of the strata corresponding to the
$\GL_g(\Z)$-equivalence classes of (cographic) cones $\sigma(M^*(\Gamma))$, where $\Gamma$ varies among all $3$-egde connected graphs of genus at most $g$.

%belonging to $\Mat\subseteq \Voro\cap \Per$.
Moreover, $\Agcogra$ contains the images of the morphisms $\tgb^V$ and $\tgb^P$.

\end{enumerate}
\end{fact}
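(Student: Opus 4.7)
Since Fact \ref{F:comp-Tor} gathers several results from the literature, the plan is to attribute each part to its source and to indicate the geometric mechanism, emphasising the items that interact directly with the results of this paper. Parts (i) and (ii) are deep structural theorems -- Shepherd-Barron's computation of the canonical model in \cite{SB}, and Alexeev's modular interpretation via stable semiabelic pairs in \cite{alex1} -- whose arguments lie far outside the admissible-decomposition framework used here and would simply be cited.

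The geometric crux of (iii) and (iv) is the identification of which cones of $\Ort$ index the degenerations of principally polarised Jacobians. For a stable curve $X$ with dual graph $\Gamma$, the period matrix of the generalised Jacobian, computed from the lengths of the edges of $\Gamma$, naturally lives in the relative interior of the cographic cone $\sigma(M^*(\Gamma))$; this is essentially a restatement of the classical construction of Mumford and Namikawa \cite{nam2} of the toric part of the boundary stratum, and is the input behind the existence of $\tgb^V$. Once one knows, by Theorem 5.6 of \cite{AB}, that $\sigma(M^*(\Gamma)) \in \Per$ whenever $\Gamma$ is $3$-edge connected, the very same toric recipe produces an extension to $\AgPer$, yielding $\tgb^P$. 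For the existence claims in (iii) I would therefore appeal to \cite{nam2, alex, AB}.

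For (iv), the strategy is to combine Theorem \ref{T:thmA} with the functoriality of the toroidal construction. By Fact \ref{F:graph-cograph}, the cographic matroid of any $3$-edge connected graph is simple and regular, so $\sigma(M^*(\Gamma)) \in \Mat$; by Theorem \ref{T:thmA} it belongs to both $\Voro$ and $\Per$. Since the toroidal compactification of $\Ag$ is functorial with respect to inclusions of admissible decompositions (a common subfan yields a common open subset of both compactifications, with matching stratifications), the union of the strata indexed by the $\GL_g(\Z)$-classes of such cones defines the same locus $\Agcogra$ inside $\AgPer$ and inside $\AgVor$. The factorisation of $\tgb^V$ and $\tgb^P$ through $\Agcogra$ then amounts to the statement that a stable curve with dual graph $\Gamma$ is sent to the stratum labelled by $[\sigma(M^*(\Gamma'))]$, where $\Gamma'$ is the $3$-edge connectivization of $\Gamma$, and this is the content of the Mumford-Namikawa-Alexeev-Brunyate constructions recalled above.

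The main obstacle in writing out (iv) carefully is making precise the functoriality of the toroidal construction and the fact that $3$-edge connectivization of $\Gamma$ does not alter the image in the toroidal compactification (bridges and separating pairs of edges correspond to data that is absorbed into the semiabelic part). This is a bookkeeping exercise on the toric boundary strata rather than a conceptual difficulty, and for a \emph{Fact} intended to summarise the literature, citing \cite{nam2, alex, AB} alongside Theorem \ref{T:thmA} suffices.
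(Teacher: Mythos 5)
The paper offers no proof of this Fact at all: it is a summary of results from the literature, with the citations embedded in the statement itself, and your citation-based treatment of parts (i)--(iii) matches that exactly. The one place where you genuinely diverge is part (iv): you deduce it from Theorem \ref{T:thmA} of this paper, whereas the Fact is attributed to Alexeev--Brunyate, who proved directly that the cographic cones $\sigma(M^*(\Gamma))$ lie in both $\Per$ (their Theorem 5.6) and $\Voro$ (via Mumford--Namikawa/Erdahl--Ryshkov), without any appeal to Seymour's theorem or the full matroidal locus. Your route is not circular --- the proof of Theorem \ref{T:thmA} uses only \cite[Thm.\ 5.6]{AB} and not Fact \ref{F:comp-Tor} --- but it inverts the logical and historical order: the paper records (iv) as known input and presents Theorem \ref{T:thmA} and Theorem \ref{T:compa-toro} as its generalization from the cographic locus to the matroidal locus, so deriving (iv) from Theorem \ref{T:thmA} buys nothing and obscures what is new. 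Your remarks on the toric mechanism behind $\tgb^V$, $\tgb^P$ and on $3$-edge connectivization are consistent with how the paper later uses these facts in the proof of Theorem \ref{T:compa-toro}.
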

We mention that the compactified Torelli map $\tgb^V$ admits a very nice modular description due to Alexeev (see \cite[Sec. 5]{alex}), which has been used by Caporaso-Viviani \cite{CV2} to describe its geometric fibers. On the other hand, the map $\tgb^P$ has been used by Gibney \cite{Gib} to find some interesting semi-ample divisors on $\Mgb$.

\subsection{Comparing $\AgPer$ and $\AgVor$}

The aim of this subsection is to compare the perfect compactification $\AgPer$ with the 2nd Voronoi compactification
$\AgVor$.  Let us first introduce a special sublocus of $\AgVor$.

\begin{defi}\label{D:mat-locus}
Let $\Agmat$ be the open subset of $\AgVor$ given by the union of the strata of $\AgVor$ corresponding to the
$\GL_g(\Z)$-equivalence classes of cones belonging to $\Mat\subseteq \Voro$.
We call $\Agmat$ the {\bf matroidal locus} of $\AgVor$.
\end{defi}
The fact that $\Agmat$ is an open subset of $\AgVor$ follows from the fact that $\Mat\subseteq \Voro$ is closed under taking faces of cones. Note that $\Agmat$ has abelian finite quotient singularities since $\Mat$ is made of simplicial cones.

We can now state the main result of this subsection, which answers positively to a question of Alexeev-Brunyate
in \cite[6.3]{AB}. In particular, part \eqref{L:part3} of the Theorem below is an extension of Fact \ref{F:comp-Tor}\eqref{F:comp-Tor4} (due to Alexeev-Brunyate) since $\Agmat$ clearly contains $\Agcogra$.

\begin{thm}\label{T:compa-toro}
\noindent
\begin{enumerate}[(i)]
\item \label{L:part1} $\Agmat$ is the biggest open subset of $\AgVor$ where the rational map $\AgVor\stackrel{\tau}{\dashrightarrow} \AgPer$ is defined and is an isomorphism.
\item \label{L:part2} $\tau(\Agmat)$ is the biggest open subset of $\AgPer$ where the rational map $\AgPer \stackrel{\tau^{-1}}{\dashrightarrow} \AgVor$ is defined.
\item \label{L:part3} The compactified Torelli maps $\tgb^P$ and $\tgb^V$ fit into the following commutative diagram
    $$\xymatrix{
    & \Agmat \ar@{^{(}->}[r] \ar[dd]^{\tau}_{\cong}& \AgVor \ar@{-->}[dd]^{\tau} \\
    \Mgb \ar[ru]^{\tgb^V} \ar[rd]_{\tgb^P} & & \\
    & \tau(\Agmat) \ar@{^{(}->}[r] & \AgPer\\
    }$$
\end{enumerate}
\end{thm}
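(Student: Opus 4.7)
The plan is to exploit the general theory of toroidal compactifications: a rational map $\ov{\Ag}^{\Sigma_1}\dashrightarrow \ov{\Ag}^{\Sigma_2}$ induced by the identity on $\Ag$ extends as a morphism at the stratum corresponding to a cone $\sigma\in \Sigma_1$ if and only if $\sigma$ is contained in some cone of $\Sigma_2$, and it is an isomorphism there precisely when the containing cone equals $\sigma$. Granting this principle (standard from the construction of toroidal compactifications via fans), the three parts all reduce to Corollary \ref{C:main-result} together with the characterization of extremal rays of $\Per$ and $\Voro$ in Remarks \ref{R:Per-simpli} and \ref{R:Voro-nonsimpli}.

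For part \eqref{L:part1}, since $\Mat$ is a subcollection of both $\Voro$ and $\Per$ closed under taking faces and the $\GL_g(\Z)$-action, it defines a common partial compactification $\Agmat$ of $\Ag$ which embeds canonically as an open subset of both $\AgPer$ and $\AgVor$; this produces the isomorphism on $\Agmat$ at once. For maximality, if $\tau$ were an isomorphism on some open $U\supsetneq \Agmat$ of $\AgVor$, then $U$ would meet a stratum corresponding to some $\sigma\in\Voro\setminus\Mat$. By the general principle, $\sigma$ would have to be contained in a cone $\sigma'\in\Per$ of the same dimension, hence $\sigma=\sigma'$, forcing $\sigma\in\Voro\cap\Per=\Mat$ by Corollary \ref{C:main-result}, a contradiction.

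For part \eqref{L:part2}, the crux is to prove that every $\sigma\in\Per$ contained in some cone $\sigma'\in\Voro$ must already lie in $\Mat$. By Remark \ref{R:Per-simpli}, the extremal rays of $\sigma$ are generated by rank-$1$ positive semi-definite forms. Any such rank-$1$ form lying in $\sigma'\in\Voro$ generates an extremal ray of $\sigma'$ by Remark \ref{R:Voro-nonsimpli} (rank-$1$ forms are rigid). Therefore $\sigma$, being the convex hull of its extremal rays, all of which are extremal rays of $\sigma'$, is a face of $\sigma'$, so $\sigma\in\Voro$ and hence $\sigma\in\Mat$ by Corollary \ref{C:main-result}. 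Combined with the general principle, this shows $\tau^{-1}$ cannot extend to any stratum of $\AgPer$ corresponding to a cone in $\Per\setminus\Mat$, giving maximality.

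For part \eqref{L:part3}, Fact \ref{F:comp-Tor}\eqref{F:comp-Tor4} says both compactified Torelli maps factor through $\Agcogra$; since every cographic matroid is regular, the cographic cones are matroidal, so $\Agcogra\subseteq \Agmat$. Under the isomorphism $\tau\colon \Agmat\stackrel{\sim}{\to}\tau(\Agmat)$ from part \eqref{L:part1}, the two maps $\tgb^V$ and $\tgb^P$ are identified, yielding the commutative diagram. The main obstacle I expect is a careful justification of the general toroidal principle stated at the outset (particularly the ``only if'' direction, which one can establish by analyzing étale-local charts around torus-invariant strata of the associated local toric models).
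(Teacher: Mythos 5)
Parts (i) and (iii) of your argument line up with the paper's proof (part (i) is exactly Corollary \ref{C:main-result} plus the standard toroidal functoriality, and part (iii) is the observation that cographic cones are matroidal together with \cite[Thm. 3.11, 4.1]{alex} and \cite[Thm. 3.7, 5.6]{AB}). The problem is part (ii).

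There you must prove: if $\sigma\in\Per$ is contained in some $\sigma'\in\Voro$, then $\sigma\in\Mat$ (this is the paper's Lemma \ref{L:Per-Vor}). Your argument correctly notes that the extremal rays of $\sigma$ are generated by rank-one forms $Q_1,\dots,Q_m$ and that each $\langle Q_i\rangle$ is an extremal ray of $\sigma'$ by rigidity (Remark \ref{R:Voro-nonsimpli}). But the next step --- ``$\sigma$, being the convex hull of its extremal rays, all of which are extremal rays of $\sigma'$, is a face of $\sigma'$'' --- is false in general. The positive hull of a subset of extremal rays of a polyhedral cone need not be a face of that cone unless the cone is simplicial: for the cone over a square with rays $r_1,\dots,r_4$, the hull of two opposite rays cuts through the interior. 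Since $\Voro$ has non-simplicial cones for $g\geq 5$ (Remark \ref{R:Voro-nonsimpli}(i)), this is exactly the regime where your argument breaks down, and it is why the paper needs a genuinely different proof of Lemma \ref{L:Per-Vor}: one forms $\sum_i Q_i$, uses the fact that all the $Q_i$ lie in the common cone $\sigma'\in\Voro$ to conclude (via \cite[Prop. 3.3.5]{Val}) that $\Vor(\sum_i Q_i)$ is the Minkowski sum of the segments $\Vor(Q_i)$, hence a zonotope, so $\sigma_{\Del(\sum_i Q_i)}\in\Mat$ by Remark \ref{R:zonotope}; one then identifies $\sigma_{\Del(\sum_i Q_i)}$ with ${\rm conv}(Q_1,\dots,Q_m)=\sigma$ using the normal-fan description of zonotopes and Fact \ref{F:Mat-Voro}. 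You would need to supply an argument of this kind (or some other substitute) to close the gap; note also that the hypothesis that the $Q_i$ lie in a \emph{common} Voronoi cone is essential and is used precisely in the Minkowski-sum step.

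A minor further point: in your maximality argument for part (i), ``contained in a cone of $\Per$ of the same dimension, hence equal'' is not a valid inference by itself (a proper subcone can have full dimension); the correct statement from the toroidal dictionary is that an isomorphism over a union of strata forces the corresponding subfans of $\Voro$ and $\Per$ to coincide, after which Corollary \ref{C:main-result} applies.
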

\begin{proof}
The proof follows from the combinatorial results of the previous sections together with standard facts from the theory
of toroidal compactifications.

Part \eqref{L:part1} follows from Corollary \ref{C:main-result}.

Part \eqref{L:part2} follows from Lemma \ref{L:Per-Vor} below.

Part \eqref{L:part3}. The map $\tgb^V$ sends the stratum of $\Mgb$ corresponding to stable curves with dual graph $\Gamma$ into the stratum of $\AgVor$ corresponding to
$\sigma(M^*(\Gamma))\in \Mat/\GL_g(\Z)\subseteq \Voro/\GL_g(\Z)$ (see \cite[Thm. 3.11 and Thm. 4.1]{alex}). The same is true for the map $\tgb^P$ by \cite[Thm. 3.7 and Thm. 5.6]{AB}.
By general results on  toroidal compactifications (see e.g. \cite[Thm. 3.2]{AB}), it follows now that $\Im(\tgb^V)\subseteq \Agmat\subseteq \AgVor$,
$\Im(\tgb^P)\subseteq\tau( \Agmat)\subseteq \AgPer$ and that the above diagram is commutative.
\end{proof}

\begin{lemma}\label{L:Per-Vor}
If $\sigma' \in \Per$ and $\sigma \in \Voro$ are such that  $\sigma'\subseteq \sigma$ then
$\sigma'\in \Mat$.
\end{lemma}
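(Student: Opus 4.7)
The plan is to reduce the lemma to the identity $\Voro\cap\Per=\Mat$ established in Corollary~\ref{C:main-result}, by showing that $\sigma'$ is itself a cone of $\Voro$. I begin by invoking Remark~\ref{R:Per-simpli}(ii) to write $\sigma'=\R_{\geq 0}\langle v_1v_1^t,\ldots,v_kv_k^t\rangle$ for primitive $v_i\in\Z^g$. Since $\sigma'\subseteq\sigma\in\Voro$, each $v_iv_i^t$ lies in $\sigma$, and Remark~\ref{R:Voro-nonsimpli}(ii) (rigidity of rank-one forms in $\Voro$) promotes each $\langle v_iv_i^t\rangle$ to an extremal ray of $\sigma$, hence to a one-dimensional face of $\sigma$ lying in $\Voro$.

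The heart of the proof is then to show that the entire cone $\sigma'$, spanned by these rank-one extremal rays of $\sigma$, is itself a face of $\sigma$. Once this is done, $\sigma'\in\Voro$ by admissibility, and combining with $\sigma'\in\Per$ gives $\sigma'\in\Mat$ via Proposition~\ref{P:inter-Mat-Perf}. My approach is a case analysis on the minimal face $F$ of $\sigma$ containing $\sigma'$ (which is in $\Voro$ by admissibility, and has each $\langle v_iv_i^t\rangle$ as an extremal ray). If every extremal ray of $F$ is of rank one, Fact~\ref{F:Mat-Voro}(iii) puts $F$ in $\Mat$, whence $F$ is simplicial by Lemma~\ref{L:prop-mat-cones}(i), and a ray-counting/dimension argument together with the fact that the $\langle v_iv_i^t\rangle$ are all extremal rays of $F$ forces $F=\sigma'$. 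Otherwise, $F$ carries an additional extremal ray $\langle\rho\rangle$ with $\rho$ of rank $\geq 2$; observe that $\langle\rho\rangle\in\Voro$ but $\langle\rho\rangle\notin\Per$, since otherwise $\langle\rho\rangle\in\Voro\cap\Per=\Mat$ would force $\rho$ to be rank one. The goal in this case is to exploit the perfect-cone structure of $\sigma'$ --- in particular, that the $v_i$ arise as shortest vectors of some positive-definite form --- together with the admissibility of $\Per$ around $\sigma'$, to exclude the existence of such a $\rho$.

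The main obstacle I anticipate is this last step: ruling out an additional non-rank-one extremal ray of $F$. Non-rank-one rigid forms do exist in $\Voro$ for $g\geq 4$ (Remark~\ref{R:Voro-nonsimpli}(ii)), so one cannot appeal to a purely abstract argument; instead, one must use the specific interaction between the two admissible decompositions $\Per$ and $\Voro$ in a neighborhood of $\sigma'$, together with the description of cones of $\Per$ via minimal vectors of positive-definite forms, to produce the required contradiction.
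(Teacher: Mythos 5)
Your setup is correct and your reduction is sensible: the extremal rays of $\sigma'$ are generated by rank-one forms $Q_i=v_iv_i^t$ (Remark \ref{R:Per-simpli}), each lies in $\sigma$ and hence spans an extremal ray of $\sigma$, and it would suffice to show $\sigma'\in\Voro$ and then quote Proposition \ref{P:inter-Mat-Perf} (or Fact \ref{F:Mat-Voro}(iii) directly). But the decisive step --- your ``Case 2'', ruling out a non-rank-one extremal ray of the minimal face $F$ of $\sigma$ containing $\sigma'$ --- is exactly where the content of the lemma lives, and you leave it as an acknowledged obstacle with only a vague plan (``exploit the perfect-cone structure \ldots to produce the required contradiction''). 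As you yourself note, rigid forms of rank $\geq 2$ do exist for $g\geq 4$, so no soft argument will do; the proposal is therefore incomplete as it stands.

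The missing idea, which is how the paper proceeds, bypasses the face analysis entirely. Set $Q:=\sum_i Q_i\in\sigma$. Because all the $Q_i$ lie in one closed secondary cone $\sigma$, the Dirichlet--Voronoi polytope behaves additively there: $\Vor(\sum_i Q_i)$ is the Minkowski sum of the $\Vor(Q_i)$ (this is \cite[Prop.\ 3.3.5]{Val}, and it is the only place the hypothesis $\sigma'\subseteq\sigma\in\Voro$ is really used). Each $Q_i$ has rank one, so each $\Vor(Q_i)$ is a segment, hence $\Vor(Q)$ is a zonotope and $Q\in\Omat$ by Remark \ref{R:zonotope}; its Delone subdivision is the generalized lattice dicing cut out by the hyperplanes $H_i=\{x: v_i^t\cdot x=0\}$, so $\sigma_{\Del(Q)}=\sigma(A)$ where $A$ has columns $v_1,\ldots,v_m$. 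Finally, Lemma \ref{L:prop-mat-cones} identifies the extremal rays of $\sigma(A)$ as exactly the $\langle Q_i\rangle$, so $\sigma(A)={\rm conv}(Q_1,\ldots,Q_m)=\sigma'$, giving $\sigma'\in\Mat$ directly. (That $\sigma'$ is a face of $\sigma$ then follows a posteriori from admissibility, since $Q$ lies in the relative interior of $\sigma_{\Del(Q)}$ and in $\sigma$.) If you want to complete your write-up, you should replace the Case 2 discussion with this Minkowski-sum/zonotope argument, or find an independent proof of that exclusion --- which I do not see how to do abstractly.
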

\begin{proof}
By Remark \ref{R:Per-simpli}, the extremal rays of $\sigma'$ are generated by rank one quadratic forms
$\{Q_1,\ldots,Q_m\}$; in particular we have that
\begin{equation}\label{E:convhull}
\sigma'={\rm conv}(Q_1,\ldots,Q_m),
\end{equation}
where ${\rm conv}$ denotes the  positive hull. Moreover, we can assume that $Q_i=v_i\cdot v_i^t$ for some primitive vector $v_i\in \Z^g$, uniquely determined up
to sign.

%By assumption we have that $Q_i\in \sigma$ and, by Remark \ref{R:Voro-nonsimpli}, we get that each of the quadratic forms $Q_i$ generates an
%extremal ray of $\sigma$. 

Consider now the quadratic form $\sum_i Q_i\in \Ort$.  Since $Q_i\in \sigma$ by assumption, from \cite[Prop. 3.3.5]{Val} we get that 
the Dirichlet-Voronoi polytope $\Vor(\sum_i Q_i)$ of the quadratic form
$\sum_i Q_i$ is the Minkowski sum of the Dirichlet-Voronoi polytopes $\Vor(Q_i)$ of the quadratic forms $Q_i$.
Since each $Q_i$ has rank one, $\Vor(Q_i)$ is a one dimensional segment for every $i=1,\ldots, m$. Therefore
$\Vor(\sum_i Q_i)$ is a zonotope and $\sigma_{\Del(\sum_i Q_i)}\in \Mat$ by Remark \ref{R:zonotope}. 

%If we write $\sigma=\sigma_{D}$ for a certain Delone subdivision $D$ of $\R^g$, then
%Fact \ref{F:refinements}\eqref{F:refinements2} gives that $D\preceq \Del(\sum_i Q_i) \preceq \Del(Q_i)$ for every
%$i=1,\ldots, m$ and moreover that
%\begin{equation}\label{E:conv2}
%\sigma_{\Del(\sum_i Q_i)}\supseteq {\rm conv}(Q_1,\ldots,Q_m).
%\end{equation}

Explicitly, $\Del(\sum_i Q_i)$ is the generalized lattice dicing cut out by the central hyperplanes in $\R^g$ that define the normal fan 
of the zonotope $\Vor(\sum_i Q_i)$ (see \cite[Thm 7.16]{Zie}).
Since $\Vor(\sum_i Q_i)$ is the Minkowski sum of $\Vor(Q_i)$, it follows from \cite[Prop. 7.12]{Zie} that the normal fan of $\Vor(\sum_i Q_i)$ is the common refinement of the normal fans
of $Q_i$, each of which  is determined by the single hyperplane $H_i:=\{x\in \R^g\: : \: v_i^t \cdot x=0\}$. From Fact \ref{F:Mat-Voro}, we get that the matrix $A\in M_{g,m}(\Z)$ 
whose column vectors are $\{v_1,\ldots, v_m\}$ is a simple unimodular matrix and that $\sigma_{\Del(\sum_i Q_i)}=\sigma(A)$. Lemma \ref{L:prop-mat-cones} gives now that 
the extremal rays of $\sigma(A)$ are exactly those generated by the rank one quadratic forms $Q_i=v_i\cdot v_i^t$, which implies that 
\begin{equation}\label{E:conv3}
\sigma_{\Del(\sum_i Q_i)}=\sigma(A)= {\rm conv}(Q_1,\ldots,Q_m).
\end{equation}
By combining \eqref{E:convhull} and \eqref{E:conv3}, we get that $\sigma'=\sigma_{\Del(\sum_i Q_i)}\in \Mat$.

\end{proof}

\begin{remark}
The rational map $\tau:\AgVor\dashrightarrow \AgPer$ is defined  on an open subset which, in general, is strictly bigger than $\Agmat$. For example, if $g=4, 5$  it is known (see \cite{ER2} and the
references therein) that  $\Voro$ is a refinement of $\Per$ (i.e. every cone of $\Voro$ is contained in a cone of $\Per$), which is indeed equivalent to the fact that the map $\tau$ is defined everywhere;
on the other hand, it follows from Example \ref{small-dim} that if $g\geq 4$ then $\Agmat$ is strictly smaller than $\AgVor$.

Indeed, it was believed for a long period (the so-called Voronoi-Dickson hypothesis) that the map $\tau$ was defined everywhere, i.e. that $\Voro$ was a refinement of $\Per$ for any $g$ (see \cite{Vor} and \cite[p. 94]{NamT}). However, this was disproved for $g=6$ by Erdahl-Rybnikov (see \cite{ER2} and \cite{ER3}).
\end{remark}

\begin{remark}
As we mention earlier in this paper, there is another well-known admissible decomposition of $\Ort$, the {\bf central cone} decomposition $\Cen$  (see \cite[Sec. (8.9)]{NamT}).
The associated toroidal compactification of $\Ag$, called the central compactification of $\Ag$ and denoted by $\AgCen$, was shown by Igusa \cite{Igu} to be isomorphic to the normalization of
 the blow-up of the Satake compactification $\Ag^*$ of $\Ag$ along the boundary.
 The comparison of $\AgCen$ with the other two toroidal compactifications considered in this paper, namely $\AgPer$ and $\AgVor$, appears to be much less clear. For example, it has been proved
 by Alexeev-Brunyate \cite{AB}  that the Torelli map $\tg$ does \emph{not} extend to a regular map from $\Mgb$ to $\AgCen$ if $g\geq 9$ (while it does for $g\leq 8$, as shown in \cite{VIGRE}), thus disproving a long standing conjecture of Namikawa \cite{nam1}.
The proof of loc. cit. shows also that the rational map $\AgVor \dashrightarrow \AgCen$ is not regular on $\Agmat$ and, similarly, that $\AgPer\dashrightarrow \AgCen$ is not regular on $\tau(\Agmat)$.
\end{remark}

\emph{Acknowledgements.}

This project started during the trip back from the Workshop  ``Tropical and Non-Archimedean Geometry" held at
the Bellairs Research Institute (Barbados) during May 2011. We thank the organizers for inviting us to the Workshop
as well as the airline company
%British Airways
for providing us a very long and not too comfortable trip back to Europe.
We thank Valery Alexeev for some precious comments on an early draft of the paper. We are grateful to the referee for useful comments and in particular for 
pointing out a gap in a previous proof of Theorem \ref{T:Mat-cont-Perf} for the $R_{10}$ matroid.
%for pointing out to us some inaccuracies in an early draft of this paper.

\end{document}